\title{2-Xor revisited: satisfiability and probabilities of functions}
\author{\'Elie de Panafieu\thanks{COMPLEX NETWORKS, University of Paris 6. Supported by the ANR projects
BOOLE (2009-13) and MAGNUM (2010-14), the P.H.C. Amadeus project (2013-14), the PEPS HYDrATA and the Austrian Science Fund (FWF) grant F5004.} 
\and 
Dani\`ele Gardy\thanks{DAVID Laboratory, University of Versailles Saint Quentin en Yvelines. Part of the work of this author was done during a long-term visit at the Institute of Discrete Mathematics and Geometry of the TU Wien. Supported by the P.H.C. Amadeus project {\em Probabilities and tree representations for Boolean functions} (2013-14) and by the ANR project BOOLE (2009-13).} 
\and 
Bernhard Gittenberger\thanks{Institute of Discrete Mathematics and Geometry, TU Wien. Supported by the FWF (Austrian Science Foundation), Special Research Program F50, grant F5003-N15, and by the \"OAD, grant Amad\'ee F01/2015.} 
\and
Markus Kuba\thanks{Supported by \"OAD, grant Amad\'ee F01/2015.}}
\begin{document}

\newtheorem{coroll}{Corollary}
\newtheorem{definition}{Definition}
\newtheorem{example}{Example}
\newtheorem{fact}{Fact}
\newtheorem{lemma}{Lemma}
\newtheorem{notation}{Notation}
\newtheorem{proposition}{Proposition}
\newtheorem{theorem}{Theorem}

\newcommand{\MP}[1]{\marginpar{\footnotesize #1}}

\theoremstyle{remark}

\newtheorem{remark}{Remark}

\newcommand{\EF}{{\cal EF}}
\newcommand{\CC}{{\cal FC}_2}
\newcommand{\classe}{{\cal C}}
\newcommand{\BB}{{\cal B}_n}
\newcommand{\f}{|E_f|}
\newcommand{\N}{E_{m,n}}
\newcommand{\M}{M_{m,n}}
\newcommand{\PP}{\vskip .5cm \noindent}
\newcommand{\true}{\textsc{True}}
\newcommand{\false}{\textsc{False}}
\newcommand{\ii}{\mathbf i}
\newcommand{\jj}{\mathbf j}
\newcommand{\kk}{\mathbf k}
\newcommand{\pp}{\mathcal P}
\newcommand{\xx}{\mathcal X}
\newcommand{\uu}{\mathbf u}
\newcommand{\vv}{\mathbf v}
\newcommand{\XX}{\mathbf X}

\newcommand{\proba}{\mathrm{Pr}_{[m,n]}}

\def\P{{\mathbb {P}}}

\newcommand{\bigO}{\mathcal{O}}
\newcommand{\bigOsoft}{\tilde{\mathcal{O}}}
\newcommand{\smallo}{o}
\newcommand{\prob}{\mathds{P}}
\newcommand{\MG}{M}
\newcommand{\vect}[1]{\mathbf{#1}}
\newcommand{\mg}{\mathcal M}
\newcommand{\core}{\operatorname{Core}}
\newcommand{\corenocycle}{\core^{(\setminus \text{cycle})}}
\newcommand{\seqv}{\operatorname{seqv}}


\newcommand{\mghat}{\hat{\MG}}
\renewcommand{\(}{\left(}
\renewcommand{\)}{\right)}
\newcommand{\Ptwoblocks}{\proba(2\; {\mathrm {blocks}})}

\newcommand{\fallfak}[2]{\ensuremath{#1^{\underline{#2}}}}

\bibstyle{plain}
\maketitle

\begin{abstract}
  The problem 2-Xor-Sat asks for the probability that a random expression, built as a conjunction of clauses $x \oplus y$, is satisfiable. 
  We revisit this classical problem by giving an alternative, explicit expression of this probability.
  We then consider a refinement of it, namely the probability that a random expression computes a specific Boolean function. 
The answers to both problems involve a description of 2-Xor expressions as multigraphs and use classical methods of analytic combinatorics by expressing probabilities through coefficients of generating functions.

\bigskip
{\bf Keywords: multigraph enumeration, probability of Boolean functions, satisfiability, 2-Xor expressions, asymptotics.}
\end{abstract}

\section{Introduction}

In constraint satisfaction problems we ask for the probability that a random expression, built on a finite set of Boolean variables according to some rules ($k$-Sat, $k$-Xor-Sat, NAE, \dots), is (un)satisfiable.
The behaviour of this probability, when the number $n$ of Boolean variables and the length $m$ of the expression (usually defined as the number of clauses) tend to infinity, has given rise to numerous studies, most of them concentrating on the existence and location of a threshold from satisfiability to unsatisfiability as the ratio $m/n$ grows.
The literature in this direction is vast; for Xor-functions see e.g. \cite{CreignouDaude99,CrDa03,CDD03,CrDa04,CREIGNOU-DAUDE-EGLY}.

Defining a probability distribution on Boolean functions through a distribution on Boolean expressions is \emph{a priori} a different question.
Quantitative logic aims at answering such a question, and many results have been obtained when the
Boolean expression, or equivalently the random tree that models it, is a variation of well-known
combinatorial or probabilistic tree models such as Galton-Watson and P\'olya trees, binary search
trees, etc (\cite{LS97, CFGG04, BrPi05, Wo05, zaionc2005, FGGZ07, GKZ08, Ko08, CGM11, GGKM11,
FGGG12, GGKM12}).

So we have two frameworks: On the one hand we try to determine the probability that an expression
is satisfiable; on the other hand we try to identify probability distributions on the set of 
Boolean functions. 
It is only natural that we should wish to merge these two approaches: We set satisfiability
problems into the framework of quantitative logic (this only requires choosing a suitable model of
expressions) and ask for the probability of $\false$ -- this is the classical satisfiability
problem  -- \emph{and} for the probabilities of the other Boolean functions as well. 
This amounts to refining the satisfiable case and taking all the functions different from $\false$
also into account. The set of Boolean expressions is then partitioned into subsets according to the 
(class of) Boolean function(s) that is computed. 

Within this unified framework one could, e.g., ask for the probability that a random expression
computes a function that is satisfied by a specific number of assigments.
Although this may turn out to be out of our reach for most classical satisfiability problems,
there are some problems for which we may still hope to obtain a (partial) description of the
probability distribution on the set of Boolean functions.
The case of 2-Xor expressions is such a problem, and this paper is devoted to presenting our
results in this domain.

Consider random 2-Xor-Sat instances with a large number~$n$ of variables, and~$m$ of clauses.
Creignou and Daud\'e established that their limit probability of satisfiability
goes from positive values to zero when the ratio~$m/n$ crosses~$1/2$ (see \cite{CreignouDaude99}).
They then proved that this threshold is coarse (\emph{cf.} \cite{CrDa04}).
Further work by Daud\'e and Ravelomanana \cite{DaudeRavelomanana} and by Pittel and Yeum
\cite{PittelYeum} led to a precise understanding of the transition in a window of size $n^{-1/3}$ around~$1/2$.

The paper is organized as follows. 
We present in the next section 2-Xor expressions and the set of Boolean functions that they can
represent. Then we give a modelization of these expressions in terms of multigraphs, before considering in Section~\ref{sec:probas} how enumeration results on classes of multigraphs allow us to compute probabilities of Boolean functions.
We then give explicit results for several classes of functions in Section~\ref{sec:results}, and conclude with a discussion on the relevance and of possible extensions of our work in Section~5.

A preliminary version of our work was presented at the conference Latin'14~\cite{dePGGK14}.

\section{Boolean Expressions and Functions and their Relations to Multigraphs}

\subsection{2-Xor Expressions and Boolean Functions}
\label{sec:model-expressions}

In this section we will lay out the framework of Boolean expressions which we will investigate. If
$x$ is a Boolean variable, we will denote by $\bar x$ its negation. 

\begin{definition}
Let $\{ x_1, x_2, \ldots, x_n \}$ be a set of Boolean variables. A 2-Xor expression is a finite
conjunction of clauses $l \oplus l'$, where $l$ and $l'$ are literals, i.e. they are elements of
$\{x_1, x_2, \ldots, x_n, \bar x_1, \bar x_2,\dots, \bar x_n\}$. 

The clauses as well as the literals within each clause are ordered (i.e. for instance that the
clauses $x\oplus y$ and $y \oplus x$ are distinct). 
From a combinatorial point of view, an expression can be regarded as a
\emph{sequence} of clauses where each clause is a pair of two literals. 
Neither the literals of a clause nor the clauses themselves need to be distinct. 

The set of all such expressions is denoted by $\mathcal E_n$.
\end{definition}

We say that a 2-Xor expression \emph{defines}, or \emph{computes},
the corresonding Boolean function.
We shall denote the number of clauses of an expression by $m$.
Now each 2-Xor expression defines a Boolean function on a finite number of variables, but not all
Boolean functions on a finite number of variables can be represented by a 2-Xor expression. 
We define~$\xx$ as the set of functions from $\{ 0,1 \}^{\mathbb N}$ to $\{ 0, 1 \}$ which have at
least one representation by a 2-Xor expression in $\bigcup_{n\ge 1} \mathcal E_n$.
We also define, for each $n \geq 1$, the set $\xx_n$ of functions in $\xx$ such that there exists
an expression in $\mathcal E_n$ representing the function.
This implies that $\xx_{n_1} \subset \xx_{n_2}$ for $n_1 \leq n_2$, and that $\xx = \cup_{n \geq 1} \xx_n$.\footnote{
For the sake of brevity, in the sequel ``(the set of) Boolean functions'' is to be understood as
either the set $\xx_n$ or the set $ \xx$, according to the context.}

Consider now the expressions in $\mathcal E_n$. There there are $4 n^2$ distinct clauses.
We assume that the $m$ clauses are drawn with a uniform probability (and hence with replacement).
This framework allows us to define, for each~$m$, a probability distribution on the set~$\xx_n$: 

\begin{definition}
Let $\N = (4 n^2)^m$ be the total number of expressions with $m$ clauses on the variables
$x_1$, \ldots, $x_n$, and $\N(f)$ denote the number of these expressions that compute~$f$.
Then, for a Boolean function $f \in \xx_n$ we set
$
\proba (f) = \frac{\N(f)}{\N}.
$
\end{definition}

\subsection{The Sets $\xx_n$}

Rewriting a clause $l_1 \oplus l_2$ as $l_1 \sim \bar l_2$ or $\bar l_1 \sim l_2$ (i.e., the literals $l_1$ and $l_2$
must take opposite values for the clause to evaluate to $\true$),  
and merging the clauses sharing a common variable, we see that the functions we
obtain can be written as a conjunction of equivalence relations on literals:
\footnote{Note that
the relation $\sim$ corresponds to an equivalence relation on the set of variables and therefore
induces a partition on the set of variables. But as to the presence of negations, the formal 
structure is in fact a little bit richer than only a set with an equivalence relation.}
$$
(l_{1} \sim \cdots \sim l_{p_1}) \wedge (l_{p_1+1} \sim \cdots \sim l_{p_2}) \wedge \cdots
\wedge (l_{p_{r-1}+1} \sim \cdots \sim l_{p_r}).
$$
E.g., for $n=7$ the expression 
$(x_1 \oplus x_3) \wedge (\bar x_6 \oplus x_5) \wedge (x_7 \oplus \bar{x}_7) \wedge (x_2 \oplus \bar{x}_3)$ 
computes a Boolean function $f$ that we can write as 
$(x_1 \sim \bar x_3) \wedge (x_6 \sim x_5) \wedge (x_7 \sim x_7) \wedge (\bar x_2 \sim \bar x_3)$ , 
or equivalently as 
$(x_1 \sim \bar x_2 \sim \bar x_3)   \wedge (x_5 \sim x_6)$; 
furthermore this function partitions the set of Boolean variables 
$\{ x_1, \ldots , x_7 \}$ into the subsets 
$\{x_1, x_2, x_3\}$, $\{ x_4 \}$, $\{x_5,x_6 \}$ and $\{ x_7 \}$.

If a clause inducing $l \sim \bar{l}$ appears, then the expression simply computes $\false$. In
other words:

\begin{proposition} \label{th:block_representation}
For any $n \geq 1$, the set $\xx_n$ of Boolean functions on $n$ variables, such that there exists
at least one 2-Xor expression in $\mathcal E_n$ that computes the function, comprises exactly the
function $\false$ and those functions $f$ that are specified as follows: 
Fix a set $Y=\{y_1,y_2,\dots,y_n\}$ such that $y_i=x_i$ or $y_i=\bar x_i$, for all $i=1,\dots,n$, and 
partition the set $Y$ into subsets. Then $f$ attains the value $\true$ if and only if for each
block of the partition all the literals have the same value. 
A variable which appears in no clause of an expression computing the function, 
or only as $l \sim l$, is put into a singleton. 
\end{proposition}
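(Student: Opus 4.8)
The plan is to establish the set equality $\xx_n = \{\false\}\cup\mathcal F$, where $\mathcal F$ denotes the family of functions described via a signed literal set $Y$ and a partition of it, by proving the two inclusions separately. I would rely throughout on the reformulation already introduced before the statement: a clause $l \oplus l'$ evaluates to $\true$ exactly when $l$ and $l'$ take opposite values, i.e. when $l \sim \bar{l'}$, so an expression is a system of constraints of the form ``literal $l$ and literal $l''$ take the same value.'' The central object is the relation on the $2n$ literals $\{x_1,\bar x_1,\dots,x_n,\bar x_n\}$ generated by these constraints. I would close it under reflexivity, symmetry and transitivity, and also under the negation rule $l \sim l' \Rightarrow \bar l \sim \bar{l'}$ (if two literals are forced equal, so are their negations), obtaining the finest \emph{negation-compatible} equivalence relation $\approx$ containing all clause constraints. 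The key semantic fact to record is that an assignment satisfies the expression if and only if it is constant on every $\approx$-class (reading $\bar x_i$ as the complement of $x_i$); this is just the statement that the solution set of the associated affine system over $\mathbb F_2$ is governed by its induced partition.

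For the inclusion $\xx_n \subseteq \{\false\}\cup\mathcal F$, I would distinguish two cases according to $\approx$. If some class contains both a literal $l$ and its negation $\bar l$, then no assignment can be constant on it, the constraint system is inconsistent, and the expression computes $\false$. Otherwise no class is self-conjugate, so by negation-compatibility the classes group into conjugate pairs $\{C,\bar C\}$ with $C \neq \bar C$; moreover each variable index $i$ contributes $x_i$ to exactly one member of such a pair and $\bar x_i$ to the other, so the conjugate pairs induce a partition of $\{1,\dots,n\}$. Choosing one representative class $C$ from each pair and setting $y_i$ to be whichever of $x_i,\bar x_i$ lies in $C$ produces a signed set $Y$ and a partition whose blocks are exactly the representative classes; the condition ``constant on each class'' becomes ``all literals in each block take the same value,'' which is precisely the function in $\mathcal F$ described by $(Y,\text{partition})$. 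Variables occurring in no nontrivial constraint form their own classes and hence singletons, matching the last sentence of the statement.

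For the reverse inclusion I would exhibit a witnessing expression for each target function. The function $\false$ is computed by the single clause $x_1 \oplus x_1$, and a function $f \in \mathcal F$ given by $Y$ and blocks $B_1,\dots,B_r$ is computed by taking, inside each block $\{y_{i_1},\dots,y_{i_s}\}$ with $s\ge 2$, the clauses $y_{i_1}\oplus \bar y_{i_2},\ \dots,\ y_{i_{s-1}}\oplus \bar y_{i_s}$ (each clause $l \oplus \bar{l'}$ enforces $l \sim l'$), adding nothing for singleton blocks; to realize the all-singletons function $\true$, or more generally to meet the requirement of having at least one clause, one appends the tautological clause $x_1\oplus\bar x_1$, which imposes only $x_1\sim x_1$. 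By the semantic fact above this expression has exactly the satisfying assignments prescribed by $(Y,\text{partition})$, so it computes $f$.

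The routine steps are the two constructions; the part demanding care is the bookkeeping of negations in the forward direction. Concretely, I expect the main obstacle to be proving rigorously that the generated relation is genuinely negation-compatible and that the inconsistent (i.e. $\false$) case coincides \emph{exactly} with the existence of a self-conjugate class containing some $l$ and $\bar l$ --- equivalently, that whenever no class is self-conjugate the affine system is consistent and its solutions are precisely the assignments constant on the $\approx$-classes. Once this dichotomy is secured, the identification of a non-self-conjugate $\approx$ with a pair $(Y,\text{partition})$ (noting that the choice of representative from each conjugate pair makes this encoding many-to-one, though that does not affect the set equality) is straightforward.
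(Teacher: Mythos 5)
Your proof is correct, and it takes a genuinely different route from the paper's. The paper argues by induction on the number $m$ of clauses: assuming the statement for the first $m$ clauses, it looks at where the two literals of the last clause fall relative to the blocks of the current partition (same block, conjugate block, or a distinct block), normalizing along the way by the flip operation $p_i \mapsto \bar p_i$, and it either concludes $\false$ or merges two blocks. You instead work statically: you build in one step the finest negation-compatible equivalence relation $\approx$ on the $2n$ literals containing all clause constraints, prove the semantic lemma that an assignment satisfies the expression if and only if it is constant on each $\approx$-class, and read the conclusion off the dichotomy between a self-conjugate class (the $\false$ case) and a pairing of the classes into conjugate pairs (the $(Y,\text{partition})$ case). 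In effect you construct in one shot the object that the paper builds incrementally, union-find style. What each approach buys: the paper's induction needs no closure machinery and matches the edge-by-edge multigraph picture exploited in the rest of the paper; your version gives a reusable characterization of the satisfying assignments and, notably, is the only one of the two that proves both inclusions explicitly --- the paper's induction shows that every expression computes $\false$ or a partition function, but the converse, that $\false$ and every function described by a pair $(Y,\text{partition})$ is realized by some expression of $\mathcal E_n$, is left implicit there, whereas your chain of clauses $y_{i_1}\oplus\bar y_{i_2},\dots,y_{i_{s-1}}\oplus\bar y_{i_s}$ inside each block and the clause $x_1\oplus x_1$ supply explicit witnesses. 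The two delicate points you flag do close routinely: $\approx$ exists as the intersection of all negation-compatible equivalence relations containing the constraints (a nonempty family, stable under intersection), and when no class is self-conjugate, the assignment equal to $1$ on a chosen representative of each conjugate pair and $0$ on its conjugate is well defined and constant on every class, which settles consistency.
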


\begin{proof}
Given a set of literals~$p = \{l_1, \ldots, l_s\}$,
let~$\bar p$ denote the set where each literal is switched
\[
    \bar p = \{ \bar l_1, \ldots, \bar l_s \}.
\]
Let us first observe that if a satisfiable expression is specified, 
in the sens of the proposition, by the partition
\[
    Y = p_1 \uplus p_2 \uplus \cdots \uplus p_t,
\]
where each variable appears in exactly one literal of~$Y$,
then it is also specified by the partition
where any number of~$p_i$ is replaced by~$\bar p_i$.

We prove the proposition by recurrence on the number of clauses~$m$.
For~$m=0$, the Boolean function computed is $\true$,
and is specified by the partition
\[
    \{\{x_1\}, \{x_2\}, \ldots, \{x_n\}\}.
\]
of~$Y = \{x_1, \ldots, x_n\}$.
Let us assume that the proposition is proven for a given~$m$,
and consider a 2-Xor expression with~$m+1$ clauses
\[
    E = \tilde{E} \wedge (l_1 \oplus l_2),
\]
where $\tilde{E}$ is a 2-Xor expression with~$m$ clauses.
If $\tilde{E}$ computes the Boolean function $\false$,
then~$E$ also computes $\false$ and the proposition holds.
Otherwise, let
\[
    Y = p_1 \uplus p_2 \uplus \cdots \uplus p_t
\] 
denote the partition obtained by application of the proposition to the expression~$\tilde{E}$.
The last clause of~$E$ is $(l_1 \oplus l_2)$,
which is equivalent with $l_1 \sim \bar l_2$
and is satisfied if and only if $l_1$ and $\bar l_2$
are assigned the same Boolean value.
Without loss of generality, we can assume
that~$l_1$ belongs to~$Y$.
Otherwise, we just replace the set $p_i$ from the partition
that contains~$\bar l_1$ with $\bar p_i$.
\begin{itemize}
\item
If~$l_2$ also belongs to~$p_i$ then, according to the proposition,
$\tilde E$ is satisfied only if~$l_1$ and~$l_2$ take the same Boolean value,
so the clause~$(l_1 \oplus l_2)$ cannot be satisfied.
Therefore, $E$ is not satisfiable, so it computes the Boolean function $\false$.
\item
If~$\bar l_2$ belongs to~$p_i$, then the clause $l_1 \oplus l_2$
is satisfied by any assignment satisfying~$\tilde{E}$,
so~$E$ is satisfiable, and the partition built by the proposition for~$E$ is $Y = p_1 \uplus p_2 \uplus \cdots \uplus p_t$.
\item
Otherwise, there is a set~$p_j$ from~$P$, distinct from~$p_i$, that contains either~$l_2$ or~$\bar l_2$.
Without loss of generality, we can assume that~$p_j$ contains~$\bar l_2$.
Otherwise, we simply replace~$p_j$ with $\bar p_j$.
Then $E$ is satisfiable.
The corresponding partition is obtained from~$(p_1, \ldots, p_t)$
by replacing the sets~$p_i$ and~$p_j$ with $p_i \cup p_j$. \qedhere
\end{itemize}
\end{proof}

We now define an equivalence relation on $\xx_n$.
\begin{definition}
Two Boolean functions $f$ and $g$ on $n$ variables are \emph{equivalent}, denoted as $f\equiv g$, 
if $g$ can be obtained from $f$ by permuting the variables and flipping some of the literals.
We denote by $\classe(f)$ the equivalence class of a function~$f$.
\end{definition}
For example, for $n=7$ the function $f$ we have defined before is equivalent to the function
$g= (x_3 \sim x_5 \sim x_2) \wedge (x_1 \sim \bar{x_6})$.
It is easy to check that all the Boolean functions in $\classe(f)$ have the same probability~$\proba(f)$.

\begin{definition}
Let $f \in \xx$; we say that a Boolean variable $x$ is an \emph{essential} variable of~$f$ if and
only if $f|_{x=1} \neq f|_{x=0}$. 
We set $e(f)$ as the number of the essential variables of~$f$.
\end{definition}

\begin{remark}
Although writing the constant functions $\true$ and $\false$ as 2-Xor expressions requires the use
of (at least) one variable, these two functions have no essential variable: $e(\true)=e(\false)=0$.
\end{remark}

Note that $g \not\in \xx_{e(f)-1}$ for all $g$ with $f\equiv g$. But there exists a function $g$
with $f\equiv g$ such that $g \in \xx_{e(f)}$.
In our running example, $e(f) = 5$ 
and the essential variables are $x_1$, $x_2$, $x_3$, $x_5$ and $x_6$,
so we can take, \textit{e.g.}, $g= (x_3 \sim x_5 \sim x_2) \wedge (x_1 \sim \bar{x_6})$.

Again, with the exception of $\false$ that forms a class by itself, the classes we have thus defined on $\xx_n$ are in bijection with the partitions of the integer~$n$; in our example the class of the function $f$ partitions the integer~7 as $1+1+2+3$.

\begin{notation}
Let~$\mathcal{P}(n)$ denote the set of partitions of the integer~$n$.
For any $\ii= (i_\ell)_{\ell \geq 1}$ in~$\mathcal{P}(n)$,
$i_{\ell}$ is the number of parts of size $\ell$.
Hence the size of $\ii$ is 
$s(\ii) := \sum_\ell \ell \, i_\ell=n$,
and the total number of parts (or \emph{blocks}) is 
$\xi(\ii) := \sum_\ell i_\ell$.
A partition of the type $(0, \ldots, 0,1,0,\ldots)$  with the single $1$ in position~$n$ is denoted by $\ii_{\bf max(n)}$.
\end{notation}

We can now express a bijection between
classes of Boolean functions and integer partitions.

\begin{proposition}
Given an integer partition~$\ii$ of~$n$,
let~$\classe_{\ii}$ denote the set of Boolean functions
from~$\xx_n \setminus \{\false\}$
with~$i_{\ell}$ blocks of size~$\ell$ for all~$\ell \geq 1$.
Then~$\{\classe_{\ii}\}_{\ii \in \mathcal{P}(n)}$
is in bijection with the quotient of the set~$\xx_n \setminus \{\false\}$
by the equivalence relation ``$\equiv$''.

We write $\ii(f)$ for the integer partition associated to a Boolean function~$f$, and we extend the notation for the equivalence class into $\classe_\ii = \classe (f)$ when $\ii = \ii(f)$.
\end{proposition}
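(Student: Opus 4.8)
The plan is to show that the assignment $f \mapsto \ii(f)$ induces a bijection between the quotient of $\xx_n \setminus \{\false\}$ by $\equiv$ and the set $\mathcal{P}(n)$ of integer partitions, and that the fibre over $\ii$ is exactly $\classe_\ii$. First I would argue that $\ii(f)$ is a genuine invariant of $f$: by Proposition~\ref{th:block_representation} every $f \in \xx_n \setminus \{\false\}$ is specified by a partition of a signed variable set $Y$ into blocks, and the proof of that proposition shows that two specifications of the same function differ only by replacing some blocks $p_i$ by $\bar{p}_i$, an operation that leaves all block sizes unchanged. Hence the block-size profile $\ii(f)$ depends only on $f$, and by definition $\classe_\ii$ is the set of functions with $\ii(f) = \ii$.

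Next I would check that $\equiv$ preserves $\ii$. Permuting the variables and flipping some literals only relabels and reflects the elements of $Y$, carrying each block to a block of the same size; therefore $f \equiv g$ forces $\ii(f) = \ii(g)$, and the map $\classe(f) \mapsto \ii(f)$ is well defined on the quotient. The main step is the converse implication $\ii(f) = \ii(g) \Rightarrow f \equiv g$. Since $f$ and $g$ have equally many blocks of each size, I would fix a size-preserving bijection between the blocks of $f$ and those of $g$ and, inside each matched pair, an arbitrary bijection of the underlying variables; together these yield a permutation $\sigma$ of $\{x_1, \dots, x_n\}$ that identifies the variable-partition of $f$ with that of $g$. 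It then remains to reconcile the signs. Within a block only the relative signs of the literals are fixed by the function, since flipping an entire block is free; and because each variable lies in exactly one block, the sign flips of distinct variables act independently. Choosing block by block the flips that align the relative signs of $f$ with those of $g$ thus transforms $f$ into $g$, proving $f \equiv g$ and hence the injectivity of the induced map.

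Finally I would establish surjectivity directly: given $\ii \in \mathcal{P}(n)$, group $\{x_1, \dots, x_n\}$ into blocks of the prescribed sizes and let $f$ be the conjunction of the equivalences $x_a \sim x_b$ inside the blocks, realised by clauses $x_a \oplus \bar{x}_b$. This $f$ is satisfiable, so it lies in $\xx_n \setminus \{\false\}$, and clearly $\ii(f) = \ii$. Combining the three steps shows that $f \mapsto \ii(f)$ is a bijection from the quotient onto $\mathcal{P}(n)$ whose fibres are precisely the sets $\classe_\ii$, which is the asserted correspondence. I expect the sign-reconciliation in the injectivity step to be the only delicate point; everything else is bookkeeping with block sizes.
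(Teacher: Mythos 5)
Your proof is correct and takes essentially the same route as the paper: the core of both arguments is that permuting variables and flipping literals carries block representations (in the sense of Proposition~\ref{th:block_representation}) to block representations, so that the block-size profile $\ii(f)$ is a complete invariant of the equivalence class. The difference is purely organizational --- where the paper proves the two inclusions $\classe_\ii \subset \classe(f)$ and $\classe(f) \subset \classe_\ii$, you phrase the argument as well-definedness, injectivity and surjectivity of the induced map $\classe(f) \mapsto \ii(f)$, which has the small merit of making explicit two points the paper leaves tacit: that $\ii(f)$ is independent of the chosen block representation, and that every partition in $\mathcal{P}(n)$ is realized by some function, i.e.\ that no $\classe_\ii$ is empty.
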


\begin{proof}
Given a Boolean function~$f$ in~$\xx_n \setminus \{\false\}$,
$\classe(f)$ denotes the class of~$f$ for the equivalence relation~``$\equiv$''.
Therefore, the set of distinct classes~$\classe(f)$
is in bijection with~$(\xx_n \setminus \{\false\})/\equiv$.
Let~$\ii$ denote the integer partition matching the block composition of~$f$.
The demonstration of the proposition is over once we have proven
$\classe_{\ii} = \classe(f)$.

Let us write the block representation of~$f$,
defined in Proposition~\ref{th:block_representation}, as
\begin{align*}
\{
& \{l_{1,1}\}, \{l_{1,2}\}, \ldots, \{l_{1,i_1}\},\\
& \{l_{2,1}, l_{2,2}\}, \{l_{2,3}, l_{2,4}\}, \ldots, \{l_{2,2 i_2-1}, l_{2,2 i_2}\},\\
& \hspace{2.1cm}\vdots\\
& \{l_{t,1}, \ldots, l_{t,t}\}, \ldots, \{l_{t,t i_t - (t-1)}, \ldots, l_{t,t i_t}\}, \ldots \},
\end{align*}
where all~$l_{i,j}$ are literals corresponding to distinct variables.
Let~$g$ be a Boolean function in $\classe_{\ii}$,
with block representation
\begin{align*}
\{
& \{\tilde{l}_{1,1}\}, \{\tilde{l}_{1,2}\}, \ldots, \{\tilde{l}_{1,i_1}\},\\
& \{\tilde{l}_{2,1}, \tilde{l}_{2,2}\}, \{\tilde{l}_{2,3}, \tilde{l}_{2,4}\}, \ldots, \{\tilde{l}_{2,2 i_2-1}, \tilde{l}_{2,2 i_2}\},\\
& \hspace{2.1cm}\vdots\\
& \{\tilde{l}_{t,1}, \ldots, \tilde{l}_{t,t}\}, \ldots, \{\tilde{l}_{t,t i_t - (t-1)}, \ldots, \tilde{l}_{t,t i_t}\}, \ldots \}.
\end{align*}
By flipping some of the literals and permuting the variables,
the block representation of~$f$ can be sent to the block representation of~$g$,
so~$f \equiv g$ and~$\classe_{\ii}$ is a subset of~$\classe(f)$.

Reciprocally, let~$h$ denote a Boolean function in~$\classe(f)$.
By definition, a block representation of~$h$ can be obtained
from the block representation of~$f$ by flipping some literals
and permuting the variables.
Therefore, the block representation of~$h$
corresponds to the same integer partition~$\ii$ as~$f$,
so~$h$ belongs to~$\classe_{\ii}$ and
$\classe(f)$ is a subset of~$\classe_{\ii}$.

Since we have both~$\classe_{\ii} \subset \classe(f)$ 
and~$\classe(f) \subset \classe_{\ii}$, we conclude
that those two sets are equal.
\end{proof}

Our running example corresponds to the integer partition~$(n-5,1,1,0,0,0)$  on $n \geq 5$ variables, which
has $n-3$ parts. The set partition it induces on the set of Boolean variables may be taken, for example, equal to $ \{ x_1,  x_2\}, \{ x_3, x_4, x_5 \}$. The
function $\true$ corresponds to the integer partition~$(n, 0, \ldots, 0)$ 
and is computed by the expressions that have only clauses of the form $l \oplus \bar{l}$.

\begin{proposition}
\label{prop:classes}
\begin{itemize}
\item[i)]
Set $p(n)$ as the number of integer partitions of $n$. Then the number of equivalence classes of computable Boolean functions is $p(n)+1$.

\item[ii)]
The class $C_\ii$ associated to an integer partition $\ii=(i_\ell)$ has cardinality 
\begin{equation} \label{cardinality}
|C_\ii| = \frac{2^{n-\xi(\ii)} \, n!}{\prod_{\ell \geq 1} i_\ell ! (\ell!)^{i_\ell}} .
\end{equation} 

\end{itemize}
\end{proposition}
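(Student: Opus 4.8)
The plan is to handle the two parts separately, the first being essentially a corollary of the preceding bijection and the second a direct enumeration. For part i), the preceding proposition establishes that the equivalence classes of $\xx_n \setminus \{\false\}$ under ``$\equiv$'' are in bijection with the integer partitions in $\mathcal{P}(n)$, of which there are $p(n)$ by definition. Since $\false$ is not captured by any block representation and therefore forms an equivalence class entirely on its own, the total number of classes of computable Boolean functions is $p(n)+1$.

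For part ii), I would count a function $f \in \classe_{\ii}$ through the two independent pieces of data furnished by its block representation (Proposition~\ref{th:block_representation}): the underlying set partition of the $n$ variables, together with, inside each block, the negation pattern relating the literals. First I would count the set partitions of $\{x_1, \ldots, x_n\}$ having exactly $i_\ell$ blocks of size $\ell$ for every $\ell$. This is the classical multinomial count $\frac{n!}{\prod_{\ell \geq 1} i_\ell!\,(\ell!)^{i_\ell}}$, in which the factor $(\ell!)^{i_\ell}$ discards the ordering of variables inside each block and the factor $i_\ell!$ discards the ordering among the $i_\ell$ blocks of equal size~$\ell$.

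Next I would fix one such set partition and count the distinct functions it supports. On a block $\{x_{j_1}, \ldots, x_{j_\ell}\}$ a function is determined by a choice of literal $l_{j_k} \in \{x_{j_k}, \bar{x}_{j_k}\}$ for each variable, subject to the single relation $l_{j_1} \sim \cdots \sim l_{j_\ell}$. There are $2^\ell$ such sign choices, but, exactly as observed in the proof of Proposition~\ref{th:block_representation}, flipping every literal of a block leaves the function unchanged; conversely two sign choices on a block produce the same relation only if they differ by this global flip. Hence each block of size $\ell$ carries precisely $2^{\ell-1}$ distinct functions, and multiplying over all blocks yields $\prod_{\ell \geq 1} (2^{\ell-1})^{i_\ell} = 2^{\sum_\ell (\ell-1) i_\ell} = 2^{s(\ii)-\xi(\ii)} = 2^{n-\xi(\ii)}$, using $s(\ii)=n$ and $\xi(\ii)=\sum_\ell i_\ell$. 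Multiplying the set-partition count by this sign factor gives formula~\eqref{cardinality}.

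The one point that requires genuine care — the main obstacle — is the injectivity and surjectivity bookkeeping: I must argue that two sign choices on a block define the same Boolean function precisely when they are related by the global flip and in no other way, and that the block structure itself is uniquely recoverable from $f$, so that every function of $\classe_{\ii}$ is produced exactly once. Both facts follow from the canonical character of the block representation in Proposition~\ref{th:block_representation}. As a consistency check, the singleton blocks ($\ell=1$) contribute a factor $2^{0}=1$, correctly reflecting that an unconstrained variable may be recorded as $x_i$ or $\bar{x}_i$ without altering~$f$.
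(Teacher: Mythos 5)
Your proof is correct and takes essentially the same approach as the paper: part i) via the bijection between the non-$\false$ classes and integer partitions (with $\false$ forming its own class), and part ii) via the multinomial count of set partitions of type $\ii$ multiplied by a polarity factor of $2^{n-\xi(\ii)}$. Your per-block count of $2^{\ell-1}$ sign patterns modulo the global flip is precisely the variant the paper itself records in the remark following its proof, where the paper's proof body instead counts $2^n$ polarities globally and divides by $2$ per block --- the two computations are identical.
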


\begin{remark}
As an aside, we mention that, as $n \to +\infty$ (see~\cite[p.~578]{FlajoletSedgewick}), 
\[
p(n) \sim \frac{1}{4n\sqrt{3}} \exp\(\pi \sqrt{2n/3}\).
\]
\end{remark}

\begin{proof}
The number of classes comes from the bijection between classes, with the exception of the one with
$\false$, and integer partitions, hence we get~i).

To prove ii), note that the number of partitions of the set of the $n$ Boolean variables that 
lead to~$\mathbf{i}$ is 
\[
\frac{n!}{ \prod_{l=1}^n (l!)^{i_l} i_l! }, 
\]
\emph{cf.} \cite[p.~205, Theorem~B]{Co74} or \cite[Theorem 13.2]{An76}. 

Now observe that there are two possible polarities for each variable and hence $2^n$ choices. But
in this way, each block of variables is counted twice, e.g. $x_1 \sim \bar{x_2} \sim x_3$ defines
the same function as $\bar{x_1} \sim x_2 \sim \bar{x_3}$. Hence we have to divide by 2 for each
block and therefore 
the cardinality of the equivalence class $C_\ii$ is given by \eqref{cardinality}.
\end{proof}

\begin{remark}
The factor $2^{n-\xi(\ii)}$ can also be arrived at as follows. Choose a variable in each block
and then fix the polarities of the other variables in this block as equal or opposite to the
chosen variable of the block. This gives $l-1$ decisions for a block of size $l$ and thus in total
a contribution of the multiplicative factor $2^{\sum_{l=2}^{n}i_l(l-1)}$. 
\end{remark}

\subsection{2-Xor Expressions as Colored Multigraphs}\label{EsCM}

In their seminal articles on the first cycle in an evolving graph and the birth of the giant
component, Flajolet, Knuth and Pittel~\cite{FKP89} and Janson, Knuth, \L{}uczak and
Pittel~\cite{giant} introduced the following notions.

The \emph{multigraph process}, also known as the \emph{uniform graph model}, produces a labelled multigraph~$G$ with $n$ vertices and $m$ edges by drawing independently and uniformly $2 m$ vertices in $[1,n]$:
\[
  u_1, v_1, u_2, v_2, \ldots, u_m, v_m.
\]
The set of vertices of $G$ is $V(G) = [1,n]$ and its set of edges is
\[
  E(G) = \{ \{u_1, v_1\}, \{u_2, v_2\}, \ldots, \{u_m, v_m\} \}.
\]
Different drawings can lead to the same multigraph: 
The number of ordered sequences of vertices that correspond to a given multigraph $G$ is
denoted by $\seqv(G)$ and satisfies
\[
  \seqv(G) = |\{ u_1, v_1, \ldots, u_m, v_m \in [1,n]^{2m}\ |\ E(G) = \{ \{u_1, v_1\}, \ldots, \{u_m, v_m\} \} \}|.
\]
A multigraph is \emph{simple} if no edge contains twice the same vertex and all its edges are distinct.
Therefore, it contains neither loops nor multiple edges.
It follows that the number of sequences of vertices that correspond to a given simple multigraph $G$ with $m$ edges is
\[
  \seqv(G) = 2^m m!.
\]
The \emph{compensation factor} $\kappa(G)$ of a multigraph $G$ is classically defined as
\[
  \kappa(G) = \frac{\seqv(G)}{2^m m!},
\]
so a multigraph is simple if and only if its compensation factor is equal to $1$.

\begin{figure}[h]
\begin{center}
       {\includegraphics[width=7cm]{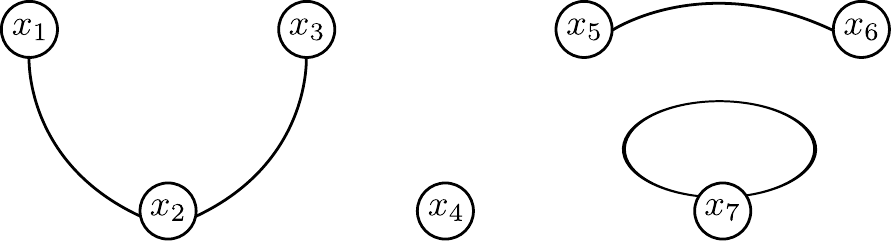}}
\caption{\label{fig:example-multigraph} The multigraph underlying our running example.}
\end{center}
\end{figure}

For example, for $m=4$ and $n=7$ the drawings $x_2$, $x_3$, $x_7$, $x_7$, $x_1$, $x_3$, $x_6$,
$x_5$ and $x_7$, $x_7$, $x_1$, $x_3$, $x_3$, $x_2$, $x_5$, $x_6$ both lead to the multigraph of Figure~\ref{fig:example-multigraph}; indeed the number of ordered sequences leading to this multigraph is $4! \; 2^3 = 192$ and its compensation factor is $\frac{1}{2}$.

\begin{fact}
	Let $\mg_{m,n}$ denote the set of multigraphs  with $n$ vertices and $m$ edges.
  The probability for the multigraph process to produce a multigraph $G$ among all multigraphs in
  $\mg_{m,n}$ is proportional to its compensation factor $\kappa(G)$
  \[
	  \mathds{P}(G\ |\ G \in \mg_{m,n})
    =
    \frac{\kappa(G)}{\sum_{H \in \mg_{m,n}} \kappa(H)}.
  \]
\end{fact}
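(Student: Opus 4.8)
The plan is to compute directly the probability that the multigraph process outputs a given multigraph $G \in \mg_{m,n}$, and then normalize. The multigraph process draws a sequence of $2m$ vertices $u_1, v_1, \ldots, u_m, v_m$ independently and uniformly from $[1,n]$, so every one of the $n^{2m}$ possible sequences has the same probability $n^{-2m}$. By the definition of $\seqv(G)$, the number of sequences that yield the multigraph $G$ is exactly $\seqv(G)$, and these are the only drawings producing $G$. Hence the unconditional probability of obtaining $G$ from a single run of the process is
\[
  \mathds{P}(\text{process outputs } G) = \frac{\seqv(G)}{n^{2m}}.
\]

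Next I would pass from this unconditional probability to the conditional one. Since every drawing produces \emph{some} multigraph in $\mg_{m,n}$, the events $\{\text{process outputs } H\}$ for $H \in \mg_{m,n}$ partition the sample space, so conditioning on the (certain) event $G \in \mg_{m,n}$ is really just conditioning within this partition. Concretely,
\[
  \mathds{P}(G \mid G \in \mg_{m,n}) = \frac{\seqv(G)/n^{2m}}{\sum_{H \in \mg_{m,n}} \seqv(H)/n^{2m}} = \frac{\seqv(G)}{\sum_{H \in \mg_{m,n}} \seqv(H)}.
\]
The common factor $n^{-2m}$ cancels between numerator and denominator, which is the whole point of the normalization.

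Finally I would rewrite this in terms of the compensation factor. Using the definition $\kappa(G) = \seqv(G)/(2^m m!)$, I multiply numerator and denominator of the last fraction by $1/(2^m m!)$; since $2^m m!$ does not depend on the summation index $H$, it factors out of the sum and cancels, giving
\[
  \mathds{P}(G \mid G \in \mg_{m,n}) = \frac{\seqv(G)/(2^m m!)}{\sum_{H \in \mg_{m,n}} \seqv(H)/(2^m m!)} = \frac{\kappa(G)}{\sum_{H \in \mg_{m,n}} \kappa(H)},
\]
which is the claimed identity. I do not anticipate a serious obstacle here: the only point that deserves care is the justification that $\seqv(G)$ counts \emph{all} drawings leading to $G$ and no others (so that the events form a genuine partition of the $n^{2m}$ equiprobable sequences), which follows immediately from the definition of $\seqv(G)$ given just above the statement. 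The rest is elementary cancellation.
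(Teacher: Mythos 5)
Your proof is correct: the paper states this as a \emph{Fact} with no written proof, treating it as immediate from the definitions of the multigraph process, $\seqv(G)$, and $\kappa(G)$, and your argument (uniform sequences of probability $n^{-2m}$, $\seqv(G)$ counting exactly the drawings yielding $G$, then cancelling the constant $2^m m!$) is precisely the intended justification. Nothing is missing; the observation that the events $\{\text{process outputs } H\}$ partition the sample space is the one point worth making explicit, and you made it.
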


The \emph{number} of multigraphs in a family $\mathcal{F}$ is defined as the sum of their compensation factors
\[
  \sum_{G \in \mathcal{F}} \kappa(G),
\]
although this quantity might not be an integer.
For example, the total number of multigraphs with $n$ vertices and $m$ edges is
\[
  M_{m,n} = \frac{n^{2m}}{2^m m!},
\]
and the number of cubic multigraphs (\textit{i.e.} multigraphs where all the vertices have degree $3$) with $2 r$ vertices is
\[
  \frac{(6 r)!}{(3!)^{2 r} 2^{3 r} (3 r)!},
\]
because such multigraphs have $3 r$ edges.
If $\mathcal{F}$ contains only simple multigraphs, its number of multigraphs is equal to its cardinality.

Let $n(G)$ and $m(G)$ denote the number of vertices and number of edges of a multigraph $G$,
respectively. 
The generating function corresponding to a family $\mathcal{F}$ of multigraphs is 
\[
  \sum_{G \in \mathcal{F}}
  \kappa(G) z^{m(G)} \frac{v^{n(G)}}{n(G)!}.
\]
For example, the generating function of all multigraphs is
\[
  M(z,v) = \sum_{n \geq 0} e^{\frac{n^2}{2} z} \frac{v^n}{n!}.
\]
As already observed by Janson, Knuth, \L{}uczak and Pittel\cite{giant}, 
and Flajolet, Salvy and Schaeffer \cite{FSS04},
a multigraph is a set of connected multigraphs, 
so the generating function for connected multigraphs is
\[
  C(z,v) = \log M(z,v) = \sum_{r \geq -1} z^r \; C_r (z v)
\]
where we have set $r = m-n$, the \emph{excess} of the multigraph, and where $C_r(z)$ is the
generating function associated with \emph{connected} multigraphs of fixed excess $r$.

\medskip
We are now ready to define a \emph{bijection} between Boolean expressions  and \emph{colored} 
multigraphs, i.e. multigraphs with different types (colors) of edges between any two vertices.

\begin{figure}[h]
\begin{center}
       {\includegraphics[width=7cm]{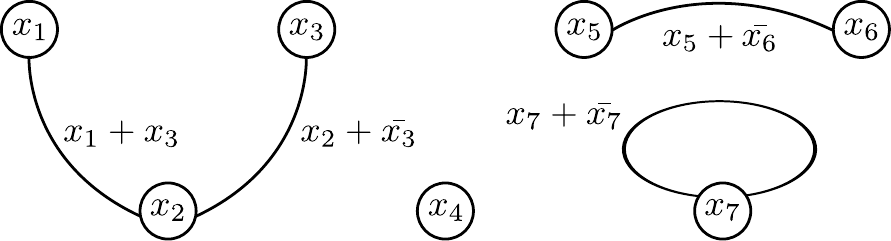}}
\caption{\label{fig:example} The colored multigraph for our running example.}
\end{center}
\end{figure}

\begin{proposition}
\label{fg-bijection}
The 2-Xor expressions are in bijection with multigraphs where loops are 4-colored and other edges are 8-colored.
This bijection is such that, for all $f\in \xx$  the number of connected components of the associated  multigraph is  $\xi(\ii(f))$.
Thus the function $M(8 z,v)$ is the bi-exponential generating function for 2-Xor expressions, i.e. 
\[
	M(8 z,v)=\sum_{n\ge 0}\sum_{m\ge 0} \N \frac{z^n v^m}{n! m!}.
\]
\end{proposition}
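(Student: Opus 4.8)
The plan is to build the correspondence one clause at a time and then read off the generating function from the compensation-factor bookkeeping of the multigraph process. First I would fix the clause-to-edge dictionary. A clause $l_1 \oplus l_2$ carried by the variables $x_i$ and $x_j$ is sent to the edge $\{i,j\}$, which is a loop precisely when $i=j$. The colour of the edge records the data lost when we forget the variables, namely the two polarities together with, for a genuine edge, which endpoint carries the first literal. A short count then gives $2\times 2\times 2 = 8$ admissible colours for an edge between distinct vertices and $2\times 2 = 4$ for a loop, the ``which endpoint is first'' bit being vacuous when $i=j$. Reading the clauses of an expression from left to right produces a coloured multigraph on $[1,n]$, and the dictionary is invertible clause by clause, which is the content of the first assertion.

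Next I would make the correspondence quantitative, since an expression is an \emph{ordered} sequence of clauses whereas a multigraph carries an \emph{unordered} edge multiset; the reconciliation is exactly the compensation factor of Flajolet--Knuth--Pittel. For a fixed uncoloured multigraph $G$ with $n$ vertices and $m$ edges, an expression mapping onto $G$ is the choice of one of the $\seqv(G)$ vertex sequences together with one of the $4^m$ polarity assignments, so the number of expressions over $G$ is $\seqv(G)\,4^m = \kappa(G)\,2^m m!\,4^m = \kappa(G)\,8^m\,m!$. Here the factor $2^m$ hidden in $\seqv(G)$ supplies the orientation bit of each edge; for a loop this bit is absent, so $\kappa(G)$ automatically downgrades the weight of a loop from $8$ to $4$. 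This is the point I expect to be the main obstacle to state cleanly, because it is precisely what allows a uniform weight $8$ per edge to coexist with the $4$ colours of a loop. Summing over $G$ and using $\sum_{G}\kappa(G)=\M$ recovers $8^m m!\,\M = (4n^2)^m = \N$, confirming that the colours are accounted for correctly.

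For the statement on connected components I would invoke Proposition~\ref{th:block_representation}. Two variables lie in the same block of $f$ exactly when they are joined by a chain of clauses, that is, when the corresponding vertices lie in the same connected component of the multigraph; an isolated vertex, or a vertex carrying only tautological loops $x_i \oplus \bar x_i$, yields a singleton block, while any non-tautological loop forces $\false$ and is therefore excluded once $f\neq\false$. Hence for every $f \in \xx\setminus\{\false\}$ the number of connected components of the multigraph of any expression computing $f$ equals the number of blocks $\xi(\ii(f))$, an invariant of $f$.

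Finally I would read off the generating function. The effective weight $8$ per edge means substituting $z \mapsto 8z$ in $M(z,v)=\sum_{n\ge 0} e^{n^2 z/2}\,v^n/n!$, and expanding $e^{4n^2 z}$ gives
\[
  M(8z,v)=\sum_{n\ge 0}\frac{v^n}{n!}\sum_{m\ge 0}\frac{(4n^2)^m}{m!}\,z^m
  =\sum_{n\ge 0}\sum_{m\ge 0}\N\,\frac{z^m v^n}{m!\,n!},
\]
which is the claimed bi-exponential generating function for the numbers $\N$, with $z$ marking the $m$ clauses and $v$ the $n$ variables.
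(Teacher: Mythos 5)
Your proof is correct, and the middle of it takes a genuinely different route from the paper's. The clause-to-edge dictionary and the color count are the same, but the paper derives the generating function vertex by vertex: each vertex contributes $e^{4z}$ for its loops and $e^{8z}$ for the edges towards each vertex of larger index, whence $\sum_m \N \frac{z^m}{m!} = e^{4n^2 z}$ at once, and $M(8z,v)$ follows. You instead decompose the set of expressions according to the underlying \emph{uncolored} multigraph $G$, count the fiber over $G$ exactly as $\seqv(G)\cdot 4^m = \kappa(G)\,8^m m!$, and sum over $G \in \mg_{m,n}$ using $\sum_{G}\kappa(G) = \M = \frac{n^{2m}}{2^m m!}$, recovering $\N = (4n^2)^m$. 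The paper's route buys brevity, since $\N=(4n^2)^m$ is essentially immediate; yours buys an explicit reconciliation that the paper leaves silent, namely how a uniform weight of $8$ per edge coexists with loops having only $4$ colors --- the factor $\tfrac12$ per loop inside the compensation factor does exactly this bookkeeping, and this is the consistency used without comment in the later substitutions of weighted multigraph counts for expression counts (e.g.\ $M(4z,2v)$ in Theorem~\ref{th:proba(sat)}). Similarly, for the connected-component claim the paper only says it is ``an easy matter to check''; your argument via Proposition~\ref{th:block_representation} (chains of clauses generate exactly the blocks, isolated vertices and tautological loops give singletons, non-tautological loops are excluded once $f\neq\false$) supplies the details and correctly restricts the claim to $f\neq\false$, the only case where $\ii(f)$ is defined. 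One last point in your favor: the monomial in your final display, $z^m v^n/(m!\,n!)$ with $z$ marking clauses and $v$ marking variables, is the correct one; the statement's $z^n v^m/(n!\,m!)$ is a typo.
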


\begin{proof}
We first present the bijection between a 2-Xor expression of $m$ clauses on $n$ variables, and a  colored multigraph on $n$ vertices and with $m$ edges.

\begin{itemize}
\item
Each Boolean variable $x_\ell$ corresponds to a vertex, and each 2-Xor clause to an edge between two distinct vertices, or to a loop on one vertex; each loop or edge can be repeated.

\item
A loop on vertex $x$ has one of four colors: $x \oplus x$, $x \oplus \bar{x}$, $\bar{x} \oplus x$ or $\bar{x} \oplus \bar{x}$.

\item
An edge between two distinct vertices $x_i$ and $x_j$ has one of eight colors: $l_i \oplus l_j$ or $l_j \oplus l_i$, where $l_i$ and $l_j$ are respectively equal to $x_i$ or its negation, and $x_j$ or its negation.

\end{itemize}
It is then an easy matter to check that the number of connected components of the multigraph is simply the number of parts in the integer partition associated with the function~$f$ computed by the expression.

We next turn to the generating function for 2-Xor expressions and start from the generating function for multigraphs
\[
M(z,v) = \sum_{m,n} \M \frac{v^n}{n!} \, z^m
= \sum_{n \geq 0} e^{\frac{n^2}{2} \, z} \; \frac{v^n}{n!},
\]
with $v$ marking the vertices and $z$ marking the edges and loops, and $\M$  the number of multigraphs on $n$ vertices and with $m$ edges.
Consider expressions built on $n$ variables, and set $\N$ as the number of such expressions with $m$ clauses.
Each vertex contributes a term $e^{4z}$ for the loops: There are 4 possible colors;
each vertex $x$ also contributes a term $\prod_{y: x<y\leq n} e^{8z} = e^{8z (n-x)}$ for the edges
to a different vertex: There are 8 possible colors. We order the vertices so as not to count them
twice. Taking into account all $n$ vertices gives
$$
\sum_m \N\, \frac{z^m}{m!}  = \prod_{s=1}^n e^{4z} \; e^{8z (n-s)}
= e^{4 n^2 z} ,
$$
which in turn leads to an expression for the global generating function as
\[
 \sum_{m,n} \N\, \frac{z^m}{m!} \; \frac{v^n}{n!}
= \sum_n e^{4 n^2 \, z} \; \frac{v^n}{n!} = M(8z,v).\qedhere
\]
\end{proof}

\subsection{The Different Ranges}
\label{sec:ranges}
We shall not consider the whole range of values for the parameters $n$ and $m$ when studying the probabilities on $\xx_n$, but restrict our investigations to the case where $m$ and $n$ are (roughly) proportional -- which is the most interesting part as it includes the domain around the threshold -- and set $m\sim\alpha n$ ($\alpha$ is usually assumed to be a constant). 
It is well known (see, e.g., \cite{DaudeRavelomanana}) that the probability that a random expression is satisfiable decreases from 1 to 0 when $\alpha$ increases, with a (coarse) threshold at $\frac{1}{2}$. 
However \emph{a Boolean function corresponding to a partition of the $n$ Boolean variables into $p$ blocks cannot appear before at least $n-p$ clauses have been drawn, i.e. before $m \geq n-p$}.
E.g., the function $x_1 \sim \cdots \sim x_n$ cannot appear for $m < n-1$, which means that it has a non-zero probability only for $\alpha \geq 1$, much later than the threshold -- and at this point the probability of $\false$ is~$1-o(1)$.
This leads us to define several regions according to the value of the ratio~$\alpha=m/n$ when $m,n \rightarrow +\infty$:
\begin{itemize}
\item $\alpha < 1/2$.
Here the probability of satisfiability is non-zero, but the attainable functions cannot have more than $n(1-\alpha)$ blocks.

\item $\alpha = 1/2$.
This is precisely the threshold range.

\item $1/2 < \alpha < 1$.
Some Boolean functions still have probability zero, but now the probability of satisfiability is $o(1)$ and the probability of $\false$ is $1-o(1)$.
Thus any other attainable Boolean function has a vanishing probability~$o(1)$.

\item $1 \leq \alpha$.
At this point all the attainable Boolean functions have non-zero probability, but again the
probability of $\false$ is tending to 1.

\end{itemize}

\section{Probabilities on the Set of Boolean Functions}
\label{sec:probas}

We consider here how we can obtain the probability of satisfiability (or equivalently of $\false$), or of any function in $\xx_n$. 
The reader should recall that the probabilities given in the sequel are actually distributions on $\xx_n$, i.e. they depend on~$n$ and~$m$.
Letting $n$ and $m=m(n)$ grow to infinity amounts to specializing the probability distribution
$\proba(f)$ (defined in Section~\ref{sec:model-expressions} for $f \in \xx_n$) to ${\mathrm
Pr}_{[m(n),n]}(f)$.
We shall be interested in its limit when $n \rightarrow + \infty$ and $f$ is a function of~$\xx$.
First we will consider the case $f=\false$ (which is the usual satisfiability problem) and derive anew the probability of satisfiability in the critical window, before turning to general Boolean functions.
We begin with some enumeration results on multigraphs that will be useful in the proofs of our results.

\begin{remark}
Note that the classical satisfiability problems as well as the above described extension are
looking for the limit of the probability ${\mathrm Pr}_{[m(n),n]}(f)$, as $n\to\infty$. This
raises the question whether the squence of distributions ${\mathrm Pr}_{[m(n),n]}$ defines a
limiting distribution on the set $\mathcal X$. We do not know whether this is true or not, but our
asymptotic results either concern the limit of the probabilities ${\mathrm
Pr}_{[m(n),n]}(f)$ for some \emph{a priori} given function $f$ which is independent of $n$ (lying
in some $\mathcal X_{n_0}$; then the limit for $n\to\infty$ is taken) or a particular sequence of
function which depends on $n$. 

When looking into the literature of quantitative logic, the question for certain limiting
probabilities often arises and is settled by means of the Drmota-Lalley-Woods theorem (see
\cite[p.~489]{FlajoletSedgewick} for the polynomial version and \cite[Sec.~2.2.5]{Drmota09} for
the analytic version). In order to apply this theorem, one has to specify the problem in terms of
a system of functional equations which has certain technical properties, in particular it must not
be linear. 
Usually, for each Boolean function one defines a generating associated with the expressions
representing the Boolean function and sets up a sort of a recursive description of the Boolean
function in terms of the other Boolean functions. If we do that for $2$-Xor formulas, we get a
linear system of functional equations, which is therefore not covered by the Drmota-Lalley-Woods
framework. Despite linearity, the system is complicated to analyze, and so we decided to approach
the problem through a bijection to certain classes of multigraphs and exploit the rich existing
knowledge on multigraph generating functions. 
\end{remark}

    \subsection{Asymptotics for Multigraphs} \label{sec:asymptotics-multigraphs}


    \subsubsection{Connected Multigraphs}

Connected graphs with a large number of vertices 
have been counted for various ranges of number of edges.
The first result is attributed to Cayley,
who obtained in~$1889$ an exact formula for the number 
of unrooted trees by resolution of a recurrence
(see~\cite[p.~51]{BLW74} for a historical discussion by Biggs, Lloyd and Wilson).
R\'enyi~\cite{ER59} derived an asymptotic formula 
for the number of unicyclic graphs. 
Erd\H{o}s and R\'enyi obtained in~\cite{ER59} 
the probability for a random graph
with high density of edges to be connected.
From their result follows an expression for the asymptotic
number of connected graphs with $n$ vertices
and $m$ edges when~$m-n = \frac{1}{2} n (\log(n) + c)$
for any value~$c$ fixed or growing to infinity.
Using generating functions, 
Wright~\cite{Wri1977} gave the asymptotic number of connected graphs
for~$m-n$ arbitrary but fixed,
and also studied the case~$m-n = \smallo(n^{1/3})$ in~\cite{Wri1980}.
Finally, Bender, Canfield, and McKay~\cite{BCMK90}
obtained the asymptotic number of connected graphs for all $n, m-n \rightarrow \infty$.
Their proof is based on a recursive formula derived by Wright. 
New proofs were proposed in~\cite{PW05} and~\cite{HS06}.

For historical reasons, most of those results
were only stated for simple graphs.
In the following theorems, we summarize those results 
and adapt them to multigraphs.

\begin{notation} \label{th:CmnCr}
The number of connected multigraphs
with $n$ vertices and $m$ edges is denoted by $C_{m,n}$.

The exponential generating function
of connected multigraphs with excess $r= m-n$
is denoted by
\[
  C_r(v) = \sum_{n \geq 0} C_{n+r,n} \frac{v^n}{n!}.
\]
\end{notation}

\begin{theorem}
\label{th:connected-multigraphs-fixed-excess}
  When the excess $r=m-n$ is fixed, then 
  \begin{equation} \label{eq:asympt-bernhard-exces}
    C_{m,n} \sim K_r n^{n + \frac{3 r - 1}{2}},
  \end{equation} 
  where the value of $K_r$ is
  \[
    K_r = 
    \begin{cases}
      1  & \text{if } r = -1, \\
      \frac{\sqrt{2\pi}}{4}  & \text{if } r = 0,\\
      \frac{ \sqrt{2 \pi} }{ 2^{3r/2} \Gamma( 3r/2 ) } 
      [v^{2r}] \log \left(\sum_{\ell \geq 0} \frac{(6\ell)!}{288^\ell (3\ell)!} \frac{v^{2\ell}}{(2\ell)!} \right) & \text{if } r > 0.
    \end{cases}
  \]    
\end{theorem}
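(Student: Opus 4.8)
\medskip
The plan is to recover $C_{m,n}=C_{n+r,n}=n!\,[v^n]C_r(v)$ from the excess-$r$ generating function $C_r(v)$ of Notation~\ref{th:CmnCr}, and to obtain its coefficient asymptotics by locating and analysing its dominant singularity. Everything is organised around the \emph{tree function} $T(v)=\sum_{k\ge1}k^{k-1}v^k/k!$, the exponential generating function of rooted trees, which satisfies $T=v e^{T}$ and has a square-root singularity at $v=e^{-1}$, where $T(v)\to1$ with $1-T(v)\sim\sqrt{2(1-ev)}$. The two extreme cases are classical and I would treat them separately. For $r=-1$ Cayley's formula gives the exact value $C_{n-1,n}=n^{n-2}$, already of the announced shape with $K_{-1}=1$. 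For $r=0$ the connected unicyclic multigraphs have, in the multigraph (compensation-factor) model, the clean closed form $C_0(v)=\tfrac12\log\frac{1}{1-T(v)}$, the contributions of loops and double edges being exactly absorbed by their compensation factors; a direct singularity analysis combined with Stirling's formula then yields $K_0=\sqrt{2\pi}/4$.

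For the generic case $r\ge1$ the engine is the \emph{core/kernel decomposition}. Pruning vertices of degree $1$ repeatedly produces the \emph{core} (minimal degree $\ge2$), which has the same excess $r$; the full connected multigraph is recovered by planting a rooted tree at each core vertex, so that $C_r(v)=\tilde C_r(T(v))$, where $\tilde C_r$ is the exponential generating function of cores of excess $r$. Suppressing the degree-$2$ vertices of a core leaves its \emph{kernel}, a connected cubic multigraph with $2r$ vertices and $3r$ edges; conversely a core is a cubic kernel in which every edge is subdivided by a (possibly empty) sequence of degree-$2$ vertices. Translating this into generating functions, the $2r$ kernel vertices contribute a factor $x^{2r}$ and each of the $3r$ kernel edges, carrying an arbitrary sequence of subdivision vertices, contributes a geometric factor $(1-x)^{-1}$, every kernel being weighted by its compensation factor. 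Recognising that $g(v):=\sum_{\ell\ge0}\frac{(6\ell)!}{288^\ell(3\ell)!}\frac{v^{2\ell}}{(2\ell)!}$ is exactly the compensation-weighted generating function of all cubic multigraphs (since $(3!)^{2\ell}2^{3\ell}=288^{\ell}$, and a cubic multigraph on $2\ell$ vertices has $3\ell$ edges, hence excess $\ell$), its logarithm $\log g(v)$ enumerates the connected ones. Collecting the factors one obtains
\[
  \tilde C_r(x)\sim\frac{a_r}{(1-x)^{3r}},\qquad a_r=[v^{2r}]\log g(v),\qquad\text{as }x\to1,
\]
and hence, after the substitution $x=T(v)$,
\[
  C_r(v)\sim\frac{a_r}{(1-T(v))^{3r}}\qquad\text{as }v\to e^{-1},
\]
the tree factors at the kernel vertices tending to $1$ and thus not affecting the leading singular term.

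Finally I would extract the asymptotics. From $1-T(v)\sim\sqrt2\,(1-ev)^{1/2}$ we get $C_r(v)\sim a_r\,2^{-3r/2}(1-ev)^{-3r/2}$, a single algebraic singularity at $v=e^{-1}$; the transfer theorem of singularity analysis then yields
\[
  [v^n]C_r(v)\sim a_r\,2^{-3r/2}\,\frac{e^{n}\,n^{3r/2-1}}{\Gamma(3r/2)}.
\]
Multiplying by $n!$ and using $e^n n!\sim\sqrt{2\pi}\,n^{n+1/2}$ turns this into
\[
  C_{n+r,n}\sim a_r\,\frac{\sqrt{2\pi}}{2^{3r/2}\Gamma(3r/2)}\,n^{n+\frac{3r-1}{2}},
\]
which is exactly \eqref{eq:asympt-bernhard-exces} with $K_r=\frac{\sqrt{2\pi}}{2^{3r/2}\Gamma(3r/2)}\,a_r$.

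The step I expect to be the genuine obstacle is the precise constant-tracking in the kernel decomposition: proving that the leading singular coefficient of $\tilde C_r$ is \emph{exactly} $[v^{2r}]\log g(v)$, and not merely that $\tilde C_r(x)$ is of order $(1-x)^{-3r}$. This forces one to (i) reconcile the compensation factors of loops and parallel edges inside the kernel with the compensation-factor weighting of the whole multigraph, (ii) handle the shared labelling of kernel and subdivision vertices and the automorphisms created by short cycles, loops and repeated edges with the correct multiplicities, and (iii) check that every subdivision- and tree-decoration factor that remains regular at $v=e^{-1}$ contributes only to lower-order terms. Rather than redo this from first principles, I would appeal to the detailed enumeration of the multigraph model by Flajolet, Knuth and Pittel~\cite{FKP89} and by Janson, Knuth, \L{}uczak and Pittel~\cite{giant}, adapting their analysis of connected components of fixed excess to the present normalisation.
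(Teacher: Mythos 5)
Your plan follows essentially the same route as the paper's proof: the cases $r=-1$ and $r=0$ via the classical closed forms $C_{-1}(v)=T(v)-T(v)^2/2$ and $C_0(v)=\tfrac12\log\tfrac1{1-T(v)}$, and the case $r\ge1$ via Wright's kernel decomposition, with the constant produced by the logarithm of the cubic-multigraph generating function. Your coefficient extraction (singular expansion $1-T(v)\sim\sqrt{2(1-ev)}$ plus transfer) is the same computation as the paper's use of $n![v^n]\,T(v)^a/(1-T(v))^b\sim\frac{2^{-b/2}}{\Gamma(b/2)}e^n n^{b/2-1}n!$, and all your constants agree.

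There is, however, one step that is false as written: the claim that suppressing the degree-$2$ vertices of a core always leaves a \emph{cubic} kernel with $2r$ vertices and $3r$ edges, and that conversely every core is a subdivided cubic kernel. A kernel is only required to have minimum degree $\ge 3$; for excess $r$ it has $2r-d$ vertices and $3r-d$ edges, where the deficiency $d$ ranges over $0,\ldots,2r-1$, and only the stratum $d=0$ is cubic. Already for $r=1$ there are non-cubic kernels: a figure-eight (two cycles sharing a vertex) has as kernel a single vertex carrying two loops, of degree $4$. As stated, your bijection would yield an \emph{exact} evaluation of $\tilde C_r(x)$ as a single term proportional to $x^{2r}(1-x)^{-3r}$, which is wrong; the correct statement is the stratification by deficiency used in the paper,
\[
C_r(v)=\sum_{d=0}^{2r-1}\frac{C^{(\geq 3)}_{r,d}}{(2r-d)!}\,\frac{T(v)^{2r-d}}{(1-T(v))^{3r-d}},
\]
from which your asymptotic $\tilde C_r(x)\sim a_r(1-x)^{-3r}$ does follow, because the cubic stratum carries the highest power of $(1-T(v))^{-1}$. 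Note also that your list of anticipated obstacles (i)--(iii) misdiagnoses where the correction terms come from: the non-cubic kernels contribute terms that are \emph{singular} at $v=e^{-1}$, of order $(1-T(v))^{-(3r-d)}$ with $d\ge 1$, not factors regular at the singularity. Once the deficiency stratification is in place, the rest of your extraction goes through verbatim and gives exactly \eqref{eq:asympt-bernhard-exces}.
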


\begin{remark}
Note that the excess of a connected multigraph is always greater or equal to $-1$.
\end{remark}

\begin{proof}
\begin{itemize}
\item
  For $r=-1$, the connected component is an unrooted tree, 
  $C_{-1}(v) = T(v) - T(v)^2 /2$ where $T(v)= \sum_n n^{n-1} \frac{v^n}{n!}$ 
  is the so-called tree function, and  \cite[p.~132]{FlajoletSedgewick}:
  \[
    n! [ v^n] C_{-1}(v) = n^{n-2}.
  \]
\item
  For $r=0$, the connected component is unicyclic, $C_{0}(v) = \frac{1}{2} \log \frac{1}{1-T(v)}$ and 
  (again from \cite[p.~133]{FlajoletSedgewick}):
  \[
    n! [ v^n] C_{0}(v) \sim \frac{1}{4} n^{n-1} \sqrt{2 n\pi} .
  \]
\item
  For $r \geq 1$, we follow the approach of Wright~\cite{Wri1977}.
  A \emph{kernel} is a multigraph with minimum degree at least $3$.
  Let us define the \emph{deficiency} of a kernel of excess $r$ with $n$ vertices
  as $d = 2 r - n$. it follows that the number of edges of a kernel is $m = 3r - d$.
  Let also $C^{(\geq 3)}_{r,d}$ denote the number of connected kernels of excess $r$ and deficiency $d$.
  All connected multigraphs of excess $r \geq 1$
  can be build from the connected kernels of excess $r$
  by replacing the edges with paths
  and the vertices with rooted trees, so
  \[
    C_{r}(v) = \sum_{d=0}^{2r-1} \frac{C^{(\geq 3)}_{r,d}}{(2 r - d)!} . \frac{T(v)^{2r-d}}{(1-T(v))^{3r-d}},
  \]
  which gives
  \begin{equation}
    \label{eq:asymptotic-phi-n}
    C_{n+r,n} = n! [v^n] C_{r}(v) = \sum_{d=0}^{2r-1} \frac{C^{(\geq 3)}_{r,d}}{(2 r - d)!} . [ v^n ] \frac{T(v)^{2r-d}}{(1-T(v))^{3r-d}}.
  \end{equation}
  We must compute the coefficients $[ v^n ] \frac{T(v)^{2r-d}}{(1-T(v))^{3r-d}}$.
  We have, for any fixed positive integers $a$ and $b$,
  \[
    n! [v^n] \frac{T(v)^a}{(1-T(v))^b} \sim 
    \frac{2^{-b/2}}{\Gamma(b/2)} \, e^n \, n^{b/2-1} \, n!,
  \]
  which is independent of~$a$.
  When $r$ is fixed, we see that, of the $2r$ terms in Equation~(\ref{eq:asymptotic-phi-n}), 
  the one for $d=0$ gives the dominant term and we get, also from \cite[p.~134]{FlajoletSedgewick}:
  \[
    n! [v^n] C_{r}(v) \sim 
    \frac{C^{(\geq 3)}_{r,0}}{(2 r)!} \, \frac{\sqrt{2\pi}}{2^{3r/2} \, \Gamma(3r/2)} \; n^{n+\frac{3r-1}{2}}.
  \]
  Finally, the constant $C^{(\geq 3)}_{r,0}$ is 
  the number of connected cubic multigraphs 
  (\textit{i.e.} $3$-regular multigraphs).
  Since there are $\frac{(6\ell)!}{(3!)^{2 \ell} 2^{3 \ell} (3 \ell)!}$ cubic multigraphs with $2
  \ell$ vertices (see Section~\ref{EsCM}),
  the generating function of connected cubic multigraphs is
  \[
    \sum_{\ell \geq 1} C^{(\geq 3)}_{\ell,0} \frac{v^{2\ell}}{(2\ell)!} =
    \log \left( \sum_{\ell \geq 0} \frac{(6\ell)!}{288^\ell (3\ell)!} \frac{v^{2\ell}}{(2\ell)!} \right),
  \]
  and a coefficient extraction leads to
  \[
    \frac{C^{(\geq 3)}_{r,0}}{(2 r)!} =
    [v^{2 r}] \log \left( \sum_{\ell \geq 0} \frac{(6\ell)!}{288^\ell (3\ell)!} \frac{v^{2\ell}}{(2\ell)!} \right).
\qedhere
  \]
\end{itemize}
\end{proof}

When the excess~$r$ goes to infinity,
non-cubic kernels cease to be negligible,
and a different approach is needed
to enumerate the connected multigraphs.

\begin{theorem}
\label{th:multigraphs-large-excess}
  When $m-n$ goes to infinity and $\frac{2m}{n} - \log(n)$
  tends towards a constant or~$- \infty$, 
  the asymptotic number of connected multigraphs is
  \[
    C_{m,n} =
    \sqrt{\frac{2(e^\lambda - 1-\lambda)^2}{\lambda (e^{2\lambda} -1 - 2\lambda e^\lambda)}} 
    \frac{n^m}{\sqrt{2 \pi n}}
    \frac{\left( 2 \sinh(\lambda /2) \right)^n}{\lambda^m}
    \left( 1 + \bigO \left( (m-n) e^{-2 m/n} \right)^{-1/2+\epsilon} \right)
  \]
  for any $\epsilon > 0$,
  where the value $\lambda$ is characterized by the relation
  \[
    \frac{\lambda}{2} \coth \frac{\lambda}{2} = \frac{m}{n}.
  \]
\end{theorem}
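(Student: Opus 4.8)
The approach I would take is to adapt the enumeration of connected \emph{simple} graphs of large excess, due to Bender, Canfield and McKay~\cite{BCMK90} and reproved by Pittel and Wormald~\cite{PW05} and by van der Hofstad and Spencer~\cite{HS06}, to the multigraph setting. The kernel expansion used in Theorem~\ref{th:connected-multigraphs-fixed-excess} is no longer available: for $r=m-n\to\infty$ the non-cubic kernels stop being negligible and the finite sum over deficiencies becomes unbounded. So a global saddle-point and local-limit argument is needed in place of a term-by-term one.

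First I would pass to the configuration (half-edge) description of the multigraph process. Tracking the degree of each vertex by a variable $x_i$, an edge $\{i,j\}$ carries weight $zx_ix_j$ and a loop weight $\frac z2 x_i^2$, so that $M(z,v)$ refines to $\exp\bigl(\frac z2(\sum_i x_i)^2\bigr)$. Extracting coefficients gives the compensation-weighted number of multigraphs with prescribed degree sequence $\mathbf d=(d_1,\dots,d_n)$, $\sum_i d_i=2m$, namely
\[
N(\mathbf d)=\frac{(2m)!}{2^m m!}\prod_{i=1}^n\frac1{d_i!}.
\]
This exhibits the degree sequence of a random multigraph as $n$ independent Poisson variables conditioned on their sum being $2m$, and is the multigraph analogue of the model underlying~\cite{PW05, HS06}; working with it from the outset is precisely what turns the final estimate into a statement about multigraphs, with loops and multiple edges already absorbed into the compensation factors.

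The key step is then to sum $N(\mathbf d)$ only over those degree sequences whose configuration yields a \emph{connected} multigraph. Since the stated range lies at or below the connectivity threshold, connected multigraphs form an exponentially small fraction of all $(m,n)$-multigraphs, so this is a large-deviation computation rather than a ``connected with high probability'' statement. I would introduce an exponential tilt with parameter $\lambda$ on the degrees, chosen so that the tilted model carries the correct edge density; the stationarity condition is exactly
\[
\frac\lambda2\coth\frac\lambda2=\frac mn .
\]
Under this tilt the per-vertex partition function produces the factor $2\sinh(\lambda/2)$, matching $m$ edges produces the factor $(n/\lambda)^m$, and the constraint $\sum_i d_i=2m$ is enforced by a local central limit theorem, which supplies the $1/\sqrt{2\pi n}$ and, together with the variance of the tilted degree and the limiting probability of connectivity, the square-root constant $\sqrt{2(e^\lambda-1-\lambda)^2/\bigl(\lambda(e^{2\lambda}-1-2\lambda e^\lambda)\bigr)}$.

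The part I expect to be the main obstacle is controlling connectivity uniformly across the whole range $m-n\to\infty$, $\frac{2m}{n}-\log n\to$ constant or $-\infty$. One must show that, after tilting to the correct density, the probability that the configuration is genuinely connected (and not merely free of isolated vertices) converges, and that this holds uniformly right up to the point where the giant component becomes spanning. I would handle this through the breadth-first exploration estimates of van der Hofstad and Spencer~\cite{HS06}, or the inside-out core expansion of Pittel and Wormald~\cite{PW05}, and then propagate the resulting error down to the claimed $\bigO\bigl((m-n)e^{-2m/n}\bigr)^{-1/2+\epsilon}$. Keeping every estimate uniform in this two-parameter regime, rather than for a fixed excess, is the delicate point.
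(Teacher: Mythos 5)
Your starting point coincides with the paper's: the weighted degree-sequence count $N(\mathbf d)=\frac{(2m)!}{2^m m!}\prod_{i}\frac{1}{d_i!}$ is exactly the formula the paper uses (restricted to $d_i\ge 2$) to count cores, $\core_{m,n}=\frac{(2m)!}{2^m m!}Q(n,m)$, and like the paper you ultimately lean on Pittel--Wormald \cite{PW05}. But the mechanism you build on top of it does not work as described. Connectivity is not a function of the degree sequence, so ``summing $N(\mathbf d)$ over degree sequences whose configuration yields a connected multigraph'' is not a well-defined operation; what is needed is $\sum_{\mathbf d}N(\mathbf d)\,\prob(\mathrm{connected}\mid\mathbf d)$, and estimating that conditional probability uniformly is the whole problem --- which you then defer back to \cite{PW05} or \cite{HS06}. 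Moreover, the tilting bookkeeping you supply is reverse-engineered from the answer and inconsistent: a tilt of i.i.d.\ Poisson degrees chosen to match the edge density has stationarity condition $\lambda=2m/n$ and per-vertex partition function $e^{\lambda}$, not $\frac{\lambda}{2}\coth\frac{\lambda}{2}=\frac{m}{n}$ and $2\sinh(\lambda/2)$. No relevant degree distribution produces the latter pair: $2\sinh(\lambda/2)$ has only odd powers of $\lambda$, and even granting such weights, mean-degree matching would read $\frac{\lambda}{2}\coth\frac{\lambda}{2}=\frac{2m}{n}$, off by a factor of $2$ from the theorem. In the theorem, $\lambda$ is conjugate to the number of \emph{edges} ($\lambda=nz$ at the saddle point of $[z^m]$, whence $e^{n^2z/2}=(e^{\lambda/2})^n$), and $2\sinh(\lambda/2)=e^{\lambda/2}-e^{-\lambda/2}=e^{\lambda/2}\left(1-e^{-\lambda}\right)$ is the unconstrained per-vertex factor $e^{\lambda/2}$ already multiplied by the cost $\left(1-e^{-\lambda}\right)$ of excluding tree components. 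So the constants you claim ``fall out of the tilt'' can only be obtained after solving precisely the connectivity problem you set aside as the final obstacle.

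The idea that makes the proof tractable, absent from your sketch, is the reduction the paper takes from \cite{PW05} and Erd\H{o}s--R\'enyi \cite{ER59}: once $m-n\to\infty$, almost every multigraph containing no tree component and no unicyclic component is connected, so $C_{m,n}$ is asymptotically the number of multigraphs all of whose components have positive excess --- and these are counted exactly, as cores (minimum degree $\ge 2$) with no isolated cycles in which every vertex is replaced by a rooted tree, giving $\sum_{\mu}\binom{n}{\mu}\mu\, n^{n-\mu-1}\corenocycle_{m-n+\mu,\mu}$. Your remark that one faces a large-deviation computation ``rather than a `connected with high probability' statement'' misses exactly this point: the proof does rest on a w.h.p.\ statement, conditioned on having no components of excess $\le 0$. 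Given this reduction, the multigraph adaptation of \cite{PW05} amounts to three concrete changes, which is what the paper carries out: the core count becomes $\frac{(2m)!}{2^m m!}Q(n,m)$; the generating function of forbidden isolated cycles acquires the terms $\ell=1,2$ (loops and double edges), so $h(x)=(1-x)^{1/2}$; and the prefactor $F_n(y)$ in the Laplace method over the core size $\mu$ is recomputed, yielding the modified constant $\alpha$. It is this Laplace optimization --- not a density-matching tilt --- that produces the equation $\frac{\lambda}{2}\coth\frac{\lambda}{2}=\frac{m}{n}$, the factor $\left(2\sinh(\lambda/2)\right)^n$, and the uniformity of the error term over the whole range $m-n\to\infty$ with $\frac{2m}{n}-\log n$ bounded above. (A coherent adaptation of the exploration-based proof of \cite{HS06} would be a genuinely different route, but your proposal does not carry that out either.)
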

\begin{proof}
This asymptotic expression has already been derived for simple graphs.
Unfortunately, the corresponding proofs are too long
to be reproduced and adapted here for multigraphs.
Instead, we follow the proof from Pittel and Wormald~\cite{PW05}.
and indicate the necessary changes in order to obtain
the same result for multigraphs.
A proof based on analytic combinatorics
is also available in~\cite[Theorem~$5.1.3$]{ElieThesis}.
It is however restricted to the case where~$2m/n$ tends toward a constant.

The proof starts with the enumeration of \emph{cores},
which are multigraphs with minimum degree at least~$2$.
Cores correspond to sequences of vertices
\[
  u_1, v_1, \ldots, u_m, v_m
\]
where each vertex appears at least~$2$ times.
The number of such sequences of length $2m$
with $n$ vertices is
\[
  \sum_{\substack{d_1, \ldots, d_n \geq 2 \\ d_1 + \cdots + d_n = 2 m}}
  \binom{2 m}{d_1, \ldots, d_n}
  =
  (2m)! Q(n,m),
\]
where the quantity $Q(n,m)$ is defined in~\cite[Equation~(2.1)]{PW05} by
\[
  Q(n,m) = 
  \sum_{\substack{d_1, \ldots, d_n \geq 2 \\ d_1 + \cdots + d_n = 2 m}}
  \prod_{j=1}^n \frac{1}{d_j!}. 
\]
Therefore, the number
of cores with $n$ vertices and $m$ edges,
defined as the sum of their compensation factors, is
\[
  \core_{m,n} = 
  \frac{(2m)!}{2^m m!}
  Q(n,m),
\]
which replaces Equation~(3.9) of~\cite[Theorem~$8$, p.~13]{PW05}.
Its asymptotic estimate, given in \cite[Equation~(3.11), p.~13]{PW05} is now
\[
  \core_{m,n} =
  (1 + \bigO((m-n)^{-1} + (m-n)^{1/2} n^{-1 + \epsilon})) 
  \frac{(2m-1)!! f(\lambda)^n}{\lambda^{2m}}
  \frac{1}{\sqrt{2 \pi n c (1+\bar{\eta} - c)}}
\]
where $\lambda$, $f$, $c$ and $\bar{\eta}$ have the same definition as in~\cite{PW05}.

The second step of the proof is the enumeration of cores
that contain no isolated cycles.
Let $\corenocycle_{m,n}$ denote the number of such multigraphs
with $n$ vertices and $m$ edges.
The result is stated in \cite[p.~4, Theorem~2]{PW05}
and its proof can be found in \cite[Section~6]{PW05}.
It relies on the exponential generating function of simple undirected cycles
\[
  \sum_{\ell \geq 3} \frac{x^\ell}{2 \ell}
  = - \frac{1}{2} \log(1-x) - \frac{x}{2} - \frac{x^2}{4}.
\]
In multigraphs, a cycle might also have size $1$ (a loop),
or size $2$ (a double edge), 
so we replace the previous generating function with
\[
  \sum_{\ell \geq 1} \frac{x^\ell}{2 \ell}
  = - \frac{1}{2} \log(1-x)
\]
and replace the function $h(x)$, defined in \cite[p.~4, Equation~(2.3)]{PW05}, by
\[
  h(x) 
  = e^{- \sum_{\ell \geq 1} \frac{x^\ell}{2 \ell}}
  = (1-x)^{1/2}.
\]
Theorem~$2$ of~\cite[p.~4]{PW05} becomes for multigraphs:
``\emph{when $m-n$ goes to infinity and $m = \bigO(n \log(n))$,
then for any fixed $\epsilon > 0$,
the number of cores with $n$ vertices and $m$ edges
that contain no isolated cycles is
\[
  \corenocycle_{m,n} = 
  (1 + \bigO(n^{-1/2 + \epsilon} + (m-n)^{-1})) 
  h \left( \frac{\lambda}{e^\lambda - 1} \right) 
  \core_{m,n} 
\]
where $\lambda$ is the unique positive root of
$
  \frac{\lambda(e^\lambda-1)}{e^\lambda-1-\lambda} = \frac{m}{n}
$.
}''

The last ingredient of the proof is an observation from Erd\H{o}s and R\'enyi,
that when $m-n$ tends to infinity, almost all graphs or multigraphs
that contain neither trees nor unicyclic components are connected.
Therefore, $C_{m,n}$ is asymptotically equal to the number of such multigraphs.
They correspond to the cores without isolated cycle,
where each vertex is replaced with a rooted tree.
Their exact number is derived in \cite[Equation~(7.1)]{PW05}
and becomes for multigraphs
\[
  \sum_{\mu=1}^n 
  \binom{n}{\mu} 
  \mu 
  n^{n - \mu - 1}
  \corenocycle_{m-n+\mu,\mu}.
\]
Borrowing the notation of~\cite{PW05},
the summand is estimated in \cite[Equation~(7.2)]{PW05} by
combining \cite[Theorem~2]{PW05} and \cite[Equation~(3.11)]{PW05},
which we both have modified
\[
  \binom{n}{\mu} 
  \mu 
  n^{n - \mu - 1}
  \corenocycle_{m-n+\mu,\mu}
  =
  (1+\bigO(\beta_1)) 
  n^m
  F_n(y)
  \exp(n H(y, \lambda)).
\]
The adaptation for multigraphs only recquires to change the definition of $F_n(y)$
and replace it with
\[
  F_n(y) = 
  \frac{1}{2 \pi n}
  \sqrt{
    \frac{(1-\sigma) y}{
      u(1+\bar{\eta} - 2 u/y)(1-y+\rho)
    }
  },
\]
using again the notations $u$, $c$, $\lambda$, $\bar{\eta}$, $\rho$ and $\sigma$ of~\cite{PW05}.
The rest of the proof is a Laplace method.
The modification we made in the definition of $F_n$ also impacts 
\cite[p.~31, Equation~(7.16)]{PW05} which becomes
\[
  F_n(\bar{y}) = 
  \frac{ 
    \sqrt{2} ( e^{\bar{\lambda}} -1 - \bar{\lambda} )^{3/2}
  }{
    2 \pi n \bar{\lambda}
    \sqrt{ (e^{\bar{\lambda}}-1)^2 - \bar{\lambda}^2 e^{\bar{\lambda}} }
  }.
\]
As a consequence, the definition of the value~$\alpha$ of~\cite[p.~5, Theorem~3]{PW05}
is, for multigraphs,
\[
  \alpha = 
  \sqrt{ 
  \frac{ 2 ( e^{\bar{\lambda}} - 1 - \bar{\lambda} )^2 }{ 
    \bar{\lambda} ( e^{2 \bar{\lambda}} - 1 - 2\bar{\lambda}e^{\bar{\lambda}} ) 
  } 
  }
\]
while the other quantities of the theorem stay unchanged.
\end{proof}

Remark that the value $\lambda$ of the previous theorem
is a constant only when $\frac{m}{n}$ is fixed.

As observed by Pittel and Wormald in~\cite{PW05},
the asymptotic formula of the previous theorem also holds
when $\frac{2m}{n} - \log(n)$ tends slowly towards infinity.
However, we do not need this extension, because
this range of $m$ is already covered by the following theorem.

\begin{theorem} \label{connected_excess-infinite}
  When both $m-n$ and $\frac{2 m}{n} - \log(n)$ go to infinity,
  the asymptotic number of connected multigraphs becomes
  \[
    C_{m,n} \sim
    \frac{n^{2m}}{2^m m!}.
  \]
\end{theorem}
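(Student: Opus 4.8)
The plan is to first observe that the target quantity is nothing but the total number of multigraphs: $\frac{n^{2m}}{2^m m!} = \M = \sum_{H \in \mathcal{M}_{m,n}} \kappa(H)$. Hence the assertion $C_{m,n} \sim \M$ is equivalent to saying that, in the weighting by compensation factors, almost every multigraph is connected. By the Fact of Section~\ref{EsCM}, the ratio $C_{m,n}/\M = \sum_{G \text{ connected}} \kappa(G) / \M$ equals the probability $\mathds{P}(G \text{ connected} \mid G \in \mathcal{M}_{m,n})$ that the multigraph process produces a connected graph, so it suffices to show that this probability tends to~$1$. Throughout I set $\omega := \tfrac{2m}{n} - \log n$, which tends to $+\infty$ by hypothesis; note that this already forces $m > \tfrac12 n \log n$, so that $m/n \to \infty$ and in particular the condition $m - n \to \infty$ is automatic.

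I would estimate the probability of disconnection by a first-moment argument. If $G$ is disconnected, the vertex set $S$ of the component containing vertex~$1$ satisfies $1 \in S \subsetneq [1,n]$ and no edge of $G$ joins $S$ to its complement. Let $Z$ count the subsets $S$ with $1 \in S$, $S \neq [1,n]$, across whose boundary no edge runs; then $Z \geq 1$ whenever $G$ is disconnected. In the multigraph process each of the $m$ edges is an independent pair of uniform vertices, and such an edge avoids a fixed cut with $|S| = k$ with probability $g(k) := \frac{k^2 + (n-k)^2}{n^2}$; by independence all $m$ edges avoid it with probability $g(k)^m$. Markov's inequality therefore gives
\[
  \mathds{P}(G \text{ disconnected}) \;\leq\; \mathds{E}[Z] \;=\; \sum_{k=1}^{n-1} \binom{n-1}{k-1} g(k)^m ,
\]
and the entire problem reduces to proving that this sum tends to~$0$.

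To estimate the sum I would exploit the identity $n\, e^{-2m/n} = e^{-\omega}$ and split the range of~$k$. For small~$k$ one has $g(k) = 1 - \tfrac{2k(n-k)}{n^2}$, so $g(k)^m \leq \exp\bigl(-\tfrac{2m}{n}k + \tfrac{2m}{n}\tfrac{k^2}{n}\bigr)$ and $\binom{n-1}{k-1} \leq \tfrac{n^{k-1}}{(k-1)!}$, whence the $k$-th term is at most $\tfrac{e^{-\omega k}}{(k-1)!\,n}\exp\bigl(\tfrac{2m}{n}\tfrac{k^2}{n}\bigr)$; the summands near $k=1$ dominate and contribute $\bigO(e^{-\omega}/n)$. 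By the symmetric substitution $j = n-k$ (a small complement not containing vertex~$1$), the terms with $k$ near~$n$ behave like $e^{-\omega j}/j!$ and contribute $\bigO(e^{-\omega})$, which is the leading part of $\mathds{E}[Z]$. Finally, for $k$ in the bulk one has $g(k) \leq c < 1$ for a constant~$c$, while $\binom{n-1}{k-1} \leq 2^n$, so each such term is at most $2^n c^m$, which is super-exponentially small in view of $m \geq \tfrac12 n \log n$. Adding the three pieces yields $\mathds{E}[Z] = \bigO(e^{-\omega}) \to 0$, which proves the theorem.

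The step I expect to be the main obstacle is the uniform control of the intermediate range, where neither the binomial coefficient nor $g(k)^m$ is negligible and the two must be balanced against each other. The delicate feature is the ``correction'' factor $\exp\bigl(\tfrac{2m}{n}\tfrac{k^2}{n}\bigr)$ coming from $g(k)^m$: it is harmless for $k$ genuinely small but competes with the decay $e^{-\omega k}$ once $k$ is comparable to $\tfrac{\omega n}{\log n}$. This forces the split between the ``precise'' small-$k$ regime and the ``crude'' $2^n c^m$ regime to be made near $k \sim \tfrac{\omega n}{\log n}$ rather than at a fixed fraction of~$n$; the naive simplifications (bounding $\binom{n-1}{k-1}$ by $(en/k)^k$, or replacing $\tfrac{n-k}{n}$ by a constant) both fail in the awkward case $\omega \ll \log n$, and avoiding this loss is the crux of the computation.
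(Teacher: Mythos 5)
Your proposal is correct, and it takes a genuinely different route from the paper. The paper's own proof is two sentences long: it cites the Erd\H{o}s--R\'enyi theorem that a random (multi)graph is connected with high probability once $2m/n - \log n \to \infty$, and then converts ``connected w.h.p.'' into $C_{m,n} \sim M_{m,n} = \frac{n^{2m}}{2^m m!}$ exactly as in your first paragraph, via the Fact that the multigraph process weights multigraphs by their compensation factors. You instead reprove the connectivity statement from scratch by a first-moment bound over cuts, exploiting the independence of the $m$ edge-draws to get the exact per-cut probability $g(k)^m$. This buys something real: the $1959$ result of Erd\H{o}s and R\'enyi is stated for \emph{simple} graphs, and transferring it to the multigraph process in this range is not innocent (here $m/n \to \infty$, so the process is simple with probability tending to $0$ and one cannot simply condition on simplicity); your argument works natively in the weighted multigraph model and thus closes a point that the paper's citation passes over silently. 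The price is that you must carry out the cut estimate yourself.

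On that estimate, one repair is needed, and your closing paragraph shows you already sense where: you cannot both place the boundary of the ``bulk'' at $k \sim \omega n/\log n$ and claim $g(k) \le c < 1$ for a constant $c$ there, since at $k_0 = \frac{\omega n}{2(\omega + \log n)}$ one has $g(k_0) = 1 - \Theta(\omega/\log n) \to 1$ whenever $\omega \ll \log n$. What actually saves the bulk is not a constant bound on $g$ but the inequality $g(k_0)^m \le \exp\bigl(-\tfrac14 \omega n (1+o(1))\bigr)$, so that each bulk term is at most $\exp\bigl(n\log 2 - \tfrac14 \omega n(1+o(1))\bigr)$, which tends to $0$ (even after multiplying by the at most $n$ bulk terms) precisely because $\omega \to \infty$. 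Alternatively, you can avoid any case split: put $j = \min(k, n-k)$, use $g(k) = g(j)$ and $\binom{n-1}{k-1} \le n^j/j!$, and apply Stirling to get, with $\beta = j/n \in [1/n, 1/2]$,
\[
  \log\left( \binom{n-1}{k-1}\, g(k)^m \right)
  \;\le\;
  j \left( \log(1/\beta) + 1 - (1-\beta)(\omega + \log n) \right) + \bigO(\log j).
\]
The bracket, as a function of $\beta$ on $[1/n, 1/2]$, is convex with derivative $-1/\beta + (\omega+\log n)$, hence maximized at the endpoints, where it equals $1 - \omega(1-o(1))$ and $1 + \log 2 - \tfrac12(\omega + \log n)$ respectively; both tend to $-\infty$, so the sum over all $1 \le k \le n-1$ is geometrically small, of order $e^{-\omega(1-o(1))}$. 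Either way your conclusion $\mathds{E}[Z] \to 0$ stands, with the dominant contribution $\sim n\, e^{-2m/n} = e^{-\omega}$ coming from the cuts that isolate a single vertex other than vertex~$1$, and the theorem follows.
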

\begin{proof}
  Erd\H{o}s and R\'enyi proved in~\cite{ER59}
  that when~$2m/n - \log(n)$ goes to infinity, 
  a random multigraph with $n$ vertices and $m$ edges
  is connected with high probability.
  Therefore, the number of connected multigraphs 
  is then asymptotically equal to the total number of multigraphs, $\frac{n^{2m}}{2^m m!}$.
\end{proof}

    \subsubsection{Weighted Multigraphs}

As recalled in the definition of the multigraph process,
multigraphs are counted according to their compensation factor,
meaning that the number of multigraphs in a family~$\mathcal{F}$
is defined as the sum of their compensation factors~$\sum_{G \in \mathcal{F}} \kappa(G)$.
The proof of Theorems~\ref{th:proba(sat)} and~\ref{th:proba-random-input} 
require a refinement of this definition,
involving the number of connected components of the multigraphs.
Specifically, we now count the number of multigraphs with $n$ vertices
and $m$ edges according to their compensation factor and a factor $\sigma$
for each connected component
\[
  \sum_{G \in \mg_{m,n}} \kappa(G) \sigma^{c(G)}
\]
where $\sigma$ is a positive real value 
and $c(G)$ denotes the number of components of~$G$.
Since the generating function of connected multigraphs is $\log M(z,v)$
and a multigraph is a set of connected multigraphs,
the previous quantity can be expressed by a coefficient extraction
\[
  \sum_{G \in \mg_{m,n}} \kappa(G) \sigma^{c(G)}
  =
  n! [z^m v^n]
  e^{\sigma \log M(z,v)}
  =
  n! [z^m v^n]
  M^\sigma(z,v).
\]
We list asymptotic formulas for those values
in the following lemma, which combines
Theorems~$8$, $9$ and~$10$ of~\cite{PR14}.
The first part focuses on multigraphs
with less edges than half the number of vertices.
As proved by Erd\H{o}s and R\'enyi, 
with high probability, they contain
only trees and unicyclic components.
The second part investigates the critical window
where the number of edges is around half the number of vertices.
In this range, connected components with fixed excess appear.
Higher number of edges seem more technical to analyze.
However, the probability of satisfiability
of the corresponding $2$-Xor formulas has already reached~$0$ almost surely,
and its study is therefore less interesting.

\begin{lemma} \label{th:sigmacritical}
  Let $\sigma$ denote a fixed positive value.
  When $\frac{m}{n}$ is in a fixed closed interval of~$]0, 1/2[$, then
  \[
    n! [z^m v^n] M^{\sigma}(z,v) 
    \sim 
    \frac{n^{2m}}{2^m m!}  
    \sigma^{n-m}
    \left( 1 - \frac{2m}{n} \right)^{\frac{1-\sigma}{2}}.
  \]
  When $m=\frac{n}{2} (1 + \mu n^{-1/3})$ 
  and~$\mu$ is bounded, then
  \[
    n! [z^m v^n] M^{\sigma}(z,v) 
    \sim 
    \frac{n^{2m}}{2^m m!}
    \sigma^{n-m}
    n^{\frac{\sigma-1}{6}}
    \sum_{r \geq 0}
    \sigma^{r}
    e^{(\sigma)}_r
    \sqrt{2 \pi}
    A(3 r + \sigma/2, \mu),
  \]
  where the value of $e^{(\sigma)}_r$ is
  \[
    e^{(\sigma)}_r = 
    [z^{2r}] 
    \left(
      \sum_{k \geq 0} 
      \frac{(6k!)}{2^{5k} 3^{2k} (3k)!}
      \frac{z^{2k}}{(2k)!}
    \right)^{\sigma}
  \]
  and the function $A$ is defined in~\cite[Lemma~3]{giant} by
  \[
    A(y, \mu) = 
    \frac{e^{-\mu^3/6}}{3^{(y+1)/3}}
    \sum_{k \geq 0} 
    \frac{(3^{2/3} \mu / 2)^k}{k! \Gamma\(\frac{y+1-2k}3\)}.
  \]
\end{lemma}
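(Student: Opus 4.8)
The plan is to extract the asymptotics of $n![z^m v^n] M^\sigma(z,v)$ from the known saddle-point / coefficient analysis of the multigraph generating function, treating the two ranges of $m/n$ separately. The starting point is the product–over–connected-components structure: since a multigraph is a set of connected multigraphs, $M^\sigma = \exp(\sigma \log M) = \exp(\sigma C(z,v))$, where $C(z,v) = \log M(z,v) = \sum_{r \geq -1} z^r C_r(zv)$ is the connected generating function written in terms of the excess $r$. Raising to the power $\sigma$ affects only how the connected components are weighted, so the entire analysis reduces to understanding how the $\sigma$-th power interacts with the tree part (excess $-1$), the unicyclic part (excess $0$), and the complex part (excess $\geq 1$).

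**First I would** handle the subcritical range $m/n \in ]0,1/2[$. Here Erdős–Rényi tells us that a typical multigraph consists only of trees and unicyclic components, so the dominant contribution to $M^\sigma$ comes from the excess $-1$ and excess $0$ parts of $C$. Using $C_{-1}(v) = T(v) - T(v)^2/2$ and $C_0(v) = \frac{1}{2}\log\frac{1}{1-T(v)}$ from Theorem~\ref{th:connected-multigraphs-fixed-excess}, one writes $M^\sigma(z,v) \approx \exp\(\sigma z^{-1}C_{-1}(zv) + \sigma C_0(zv)\)$ up to contributions from $r \geq 1$ that are negligible in this range. The coefficient $[z^m v^n]$ is then extracted by a saddle-point analysis; the factor $\sigma C_0$ produces the power $(1-2m/n)^{(1-\sigma)/2}$ through the singularity of $C_0$ at $T=1$ (equivalently $zv = e^{-1}$), while the overall $\sigma^{n-m}$ records that a tree-plus-unicyclic forest on $n$ vertices with $m$ edges has exactly $n-m$ components, each weighted by $\sigma$. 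This matches the first displayed formula, and the leading constant $n^{2m}/(2^m m!)$ is just $M_{m,n}$, the total multigraph count.

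**The main obstacle is** the critical window $m = \frac{n}{2}(1+\mu n^{-1/3})$, where components of every fixed excess $r \geq 0$ contribute simultaneously and must all be retained. Here I would follow the giant-component analysis of \cite{giant}: near the singularity $T(zv) \to 1$ one rescales so that the excess-$r$ connected pieces contribute through their leading cubic-kernel constants. Raising $M$ to the $\sigma$-th power multiplies the weight of each component by $\sigma$, so the cubic-core generating function $\sum_k \frac{(6k)!}{2^{5k}3^{2k}(3k)!}\frac{z^{2k}}{(2k)!}$ gets raised to the power $\sigma$, yielding the coefficients $e^{(\sigma)}_r = [z^{2r}]\(\cdots\)^\sigma$; an extra factor $\sigma^r$ appears because a component of excess $r$ carries $r$ more edges than vertices relative to the forest baseline. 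The delicate part is that the coefficient extraction near criticality is governed by the Airy-type integrals encapsulated in the function $A(y,\mu)$ of \cite[Lemma~3]{giant}, and one must verify that the argument shifts from $3r$ to $3r + \sigma/2$ precisely because the unicyclic part contributes a $\sigma/2$ to the exponent of $(1-T)$ (recall $C_0$ carries the factor $\tfrac12$). Assembling the sum over $r$, together with the prefactor $n^{(\sigma-1)/6}$ coming from the scale of the critical window and the $\sqrt{2\pi}$ from the Gaussian normalization, reproduces the second displayed formula.

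**In short,** both statements are coefficient asymptotics of $M^\sigma$ obtained by the standard saddle-point and singularity-analysis machinery for the multigraph generating function, with the role of $\sigma$ being to reweight each connected component; the genuinely technical step is the critical-window analysis, which I would import wholesale from Theorems~$8$–$10$ of~\cite{PR14} (themselves resting on \cite{giant}), as the lemma statement explicitly indicates.
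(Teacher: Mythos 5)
Your proposal takes essentially the same route as the paper: the paper offers no self-contained proof of this lemma, stating only that it combines Theorems~8, 9 and~10 of~\cite{PR14}, and your sketch---after correctly attributing the factor $\sigma^{n-m}$ to the tree components, the factor $(1-2m/n)^{(1-\sigma)/2}$ to the $\sigma$-weighted unicyclic part, and the terms $e_r^{(\sigma)}$, $\sigma^r$ and $A(3r+\sigma/2,\mu)$ to the cubic kernels and the Airy-type analysis of~\cite{giant}---likewise defers the actual technical work to those theorems. One small imprecision (a trees-plus-unicyclic multigraph with $n$ vertices and $m$ edges has exactly $n-m$ \emph{acyclic} components, not $n-m$ components in total, the unicyclic ones being extra) is harmless, since that extra count is exactly what is absorbed into the singular factor you describe.
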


\begin{remark}
In the rest of the paper, we will only need the cases~$\sigma = 1$ and $\sigma = 1/2$.
\end{remark}
 
The function~$A(y,\mu)$ is a variation 
of the classical Airy function which has been 
thoroughly analyzed in~\cite[Lemma~3]{giant}.
For example, as mentioned in~\cite[Equation~(10.28)]{giant},
for $y=1$ it satisfies the relation
\[
  A(1,\mu) = e^{-\mu/12} \operatorname{Ai}(\mu^2/4),
\]
and for $y=0$, it holds that
\[
  A(0,\mu) = 
  - \frac{1}{2} \mu e^{- \mu^3/12} \operatorname{Ai}(\mu^2/4)
  - e^{-\mu^3/12}
  \operatorname{Ai}'(\mu^2/4).
\]
It is also close to the function defined in~\cite[Theorem~11]{BFSS01} 
and~\cite[Theorem~IX.16]{FlajoletSedgewick}, 
denoted by $G$ in the first one, and by~$S$ in the second one.

    \subsection{Probability of Satisfiability}

The probability of satisfiability
of a random $2$-Xor expression
has been studied by Creignou and Daud\'e~\cite{CreignouDaude99, CrDa04},
Daud\'e and Ravelomanana~\cite{DaudeRavelomanana}
and Pittel and Yeum~\cite{PittelYeum}.
We derive anew their results
to give a first application
of the link between $2$-Xor expressions
and colored multigraphs.

\begin{theorem}\label{th:proba(sat)}
The probability that a random expression is satisfiable is 
\[
  \proba({\mathrm Sat}) = \frac{[z^m v^n] \sqrt{M(4z,2v)}}{[ z^m v^n] M(8z,v)}.
\]
Its limit for $n \rightarrow + \infty$
when $\frac{m}{n}$ is in a fixed closed interval
of $]0, \frac{1}{2}[$ is
\[
  \left( 1 - \frac{2m}{n} \right)^{1/4}.
\]
When $m = \frac{n}{2}(1 + \mu n^{-1/3})$
and $\mu$ is bounded,
this becomes
\[
  n^{-1/12} \sqrt{2 \pi}
  \sum_{r \geq 0} \frac{e_r^{(1/2)}}{2^r} A(3r + 1/4, \mu),
\]
with the notations of Lemma~\ref{th:sigmacritical}.
\end{theorem}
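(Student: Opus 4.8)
The plan is to obtain the exact formula by counting pairs consisting of a satisfiable expression together with one of its satisfying assignments, and then to recover the plain count of satisfiable expressions by a square-root (component) argument; the asymptotics then follow from Lemma~\ref{th:sigmacritical} with $\sigma=1/2$.

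First I would count the pairs. By Proposition~\ref{th:block_representation}, a satisfiable expression on the variables $x_1,\dots,x_n$ partitions them into blocks, and inside each block all literals are forced to a common value, so the number of satisfying assignments equals $2^{c}$, where $c$ is the number of blocks, i.e. the number of connected components of the associated multigraph (isolated vertices included). Conversely, fix an assignment. Among the $2n$ literals exactly $n$ evaluate to \true\ and $n$ to \false, and a clause $(l,l')$ is satisfied precisely when $l$ and $l'$ take opposite truth values; hence exactly $2\cdot n\cdot n=2n^2$ of the $4n^2$ clauses are consistent with any fixed assignment. Consequently the number of sequences of $m$ clauses compatible with a given assignment is $(2n^2)^m$, and summing over the $2^n$ assignments yields the bi-exponential generating function of the pairs
\[
\sum_{n\ge 0}\frac{v^n}{n!}\,2^n\sum_{m\ge 0}\frac{(2n^2 z)^m}{m!}
=\sum_{n\ge 0}\frac{(2v)^n}{n!}\,e^{2n^2 z}=M(4z,2v).
\]

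Next I would pass from this $2^{c}$-weighted count to the genuine count of satisfiable expressions. Exactly as the decomposition $\log M(z,v)=C(z,v)$ reflects that a multigraph is a set of connected multigraphs, a satisfiable expression is a set of connected satisfiable components; writing $S(z,v)$ for the generating function of the latter, the satisfiable expressions have generating function $e^{S}$, while attaching to each component one of its two satisfying assignments weights each component by $2$ and produces $e^{2S}$. Since the left-hand display equals this weighted series, $e^{2S}=M(4z,2v)$, so the generating function of satisfiable expressions is $e^{S}=\sqrt{M(4z,2v)}$, and $[z^m v^n]\sqrt{M(4z,2v)}=\N(\mathrm{Sat})/(m!\,n!)$. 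As $[z^m v^n]M(8z,v)=\N/(m!\,n!)=(4n^2)^m/(m!\,n!)$ by Proposition~\ref{fg-bijection}, dividing gives the claimed exact expression for $\proba(\mathrm{Sat})$. I expect this square-root step to be the crux: the assignment-weighted (pairs) count is elementary, whereas identifying the unweighted count with its square root rests entirely on recognizing the satisfiable series as $\exp$ of the connected-satisfiable one.

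Finally I would extract the asymptotics. Writing $\sqrt{M(4z,2v)}=M^{1/2}(4z,2v)$ and rescaling the coefficient,
\[
[z^m v^n]\sqrt{M(4z,2v)}=4^m 2^n\,[z^m v^n]M^{1/2}(z,v),
\]
I apply Lemma~\ref{th:sigmacritical} with $\sigma=1/2$. In the regime where $m/n$ lies in a fixed closed subinterval of $]0,1/2[$ this gives $n![z^m v^n]M^{1/2}(z,v)\sim \frac{n^{2m}}{2^m m!}\,2^{m-n}\,(1-\frac{2m}{n})^{1/4}$; the powers of $2$ and of $n$ collapse, $4^m 2^n\,2^{-m}\,2^{m-n}n^{2m}=(4n^2)^m$, and together with the $1/m!$ and the $1/n!$ from the coefficient rescaling the numerator becomes asymptotic to $\frac{(4n^2)^m}{m!\,n!}(1-\frac{2m}{n})^{1/4}$, so dividing by the denominator $\frac{(4n^2)^m}{m!\,n!}$ leaves $(1-\frac{2m}{n})^{1/4}$. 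In the critical window $m=\frac n2(1+\mu n^{-1/3})$ the same choice $\sigma=1/2$ turns $n^{(\sigma-1)/6}$ into $n^{-1/12}$, the Airy argument $3r+\sigma/2$ into $3r+1/4$, and $\sigma^r$ into $2^{-r}$; after the identical collapse of the prefactors against the denominator, only $n^{-1/12}\sqrt{2\pi}\sum_{r\ge 0}\frac{e_r^{(1/2)}}{2^r}A(3r+1/4,\mu)$ survives, which is the stated limit. The only genuine work here is bookkeeping the constants in the rescaling; the analytic heavy lifting is already packaged in Lemma~\ref{th:sigmacritical}.
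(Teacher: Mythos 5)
Your proposal is correct and follows essentially the same route as the paper: counting pairs $\{$satisfiable expression, satisfying assignment$\}$ to obtain $M(4z,2v)$, identifying the generating function of satisfiable expressions as $\sqrt{M(4z,2v)}$ via the exponential formula (each connected component admits exactly two satisfying assignments), and then applying Lemma~\ref{th:sigmacritical} with $\sigma=1/2$. The only cosmetic difference is that you derive the pairs generating function by directly counting the $2n^2$ clauses consistent with a fixed assignment, whereas the paper phrases the same computation as a restriction to $4$ edge colors and $2$ loop colors.
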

\begin{proof}
To obtain the generating function for satisfiable expressions, 
we shall count the number of pairs 
$\{$satisfiable expression, satisfying as\-sign\-ment$\}$, 
then get rid of the number of satisfying assignments. 
We can assign $\true$ or $\false$ to each variable, 
and one of eight colors to an edge,
hence $M(8z,2v)$ is the generating function associated with the 
pairs $\{$expression, as\-sign\-ment$\}$.

Once we have chosen an assignment of variables, 
for an expression to be satisfiable 
we have to restrict the edges we allow.
Say that $x$ and $y$ are assigned the same value; 
then the edges colored by 
$x\oplus y$, $y \oplus x$, $\bar{x} \oplus \bar{y}$ or $ \bar{y} \oplus \bar{x }$ 
cannot appear in a satisfiable expression.
For a similar reason, the only loops allowed 
are $x \oplus \bar{x}$ or $\bar{x} \oplus {x}$.
We thus count multigraphs with $2$ colors of loops 
and $4$ colors of edges, which gives a generating function equal to $M(4z,2v)$.
  
Now consider the generating function $S(z,v)$ 
for satisfiable expressions: 
We claim that it is equal to $\sqrt{M(4z,2v)}$.
To see this, choose an expression 
computing a Boolean function~$f$, 
and consider how many assignments satisfy it: 
We have seen (cf. Proposition~\ref{prop:classes}) 
that their number is equal to $2^ {\xi(f)}$, 
with $\xi(f)$ the number of connected components 
(once we have chosen the value of a single variable in a block, 
all other variables in that block have received their values 
if the expression is to be satisfiable).
This means that, writing 
$S(z,v) = \exp \left(\log S(z,v)\right)$ with $\log S(z,v)$ 
the function for connected components,
the generating function enumerating the pairs 
$\{$expression, satisfiable as\-sign\-ment$\}$ 
is equal to $\exp (2 \log S(z,v)) = S(z,v)^2$.
As we have just shown that it is also equal 
to $M(4z,2v)$, the value of $\proba(Sat)$ follows.

To obtain the asymptotics before and in the critical window~$m = n/2 + \bigO(n^{2/3})$,
we use Lemma~\ref{th:sigmacritical}. 
\end{proof}

The link between the enumeration
of $2$-Xor expressions and of multigraphs
and the knowledge of the asymptotic number of multigraphs
can also be combined to investigate
the probability for a satisfiable expression
to be satisfied by an input.

\begin{theorem}
\label{th:proba-random-input}
The probability that an input (fixed or random) 
satisfies a random satisfiable expression
with $n$ variables, $m$ clauses and excess $r = m-n$ is 
\[
  \frac{[z^m v^n] M(4z,2v)}{2^n [z^m v^n] \sqrt{M(4z,2v)}}.
\]
When $\frac{m}{n}$ is in a closed interval of $]0,\frac{1}{2}[$, then this is asymptotically equivalent to 
\[
  \frac{1}{2^m}
  \left( 1 - \frac{2m}{n} \right)^{-1/4},
\]
and it is
\[
 \frac{n^{1/12}}{2^m}
 \frac{1}{\sum_r 2^{-r} e_r^{(2)} A(3r + 1/4, \mu)}.
\]
for $m = \frac{n}{2}(1 + \mu n^{-1/3})$ with $\mu$ bounded,
using the notation of Lemma~\ref{th:sigmacritical}.
\end{theorem}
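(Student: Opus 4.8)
The plan is to first derive the exact formula by a double counting of expression/assignment pairs, and then to read off the two asymptotic regimes from Lemma~\ref{th:sigmacritical} after the change of variables $z\mapsto 4z$, $v\mapsto 2v$.

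For the exact formula I would argue as follows. From the proof of Theorem~\ref{th:proba(sat)} we may use that $M(4z,2v)$ is the bivariate exponential generating function enumerating the pairs formed by an expression together with one of its satisfying assignments, while $\sqrt{M(4z,2v)}$ enumerates the satisfiable expressions themselves. Thus, for fixed $n$ and $m$, the number of satisfiable expressions equals $m!\,n!\,[z^m v^n]\sqrt{M(4z,2v)}$ and the number of such pairs equals $m!\,n!\,[z^m v^n]M(4z,2v)$. The key is a symmetry: flipping a variable $x_i$, i.e.\ replacing every literal on $x_i$ by its negation throughout all clauses, is an involution of $\mathcal E_n$ that sends the expressions satisfied by an assignment $a$ bijectively onto those satisfied by the assignment $a'$ obtained from $a$ by flipping its $i$-th coordinate. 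Composing such involutions shows that all $2^n$ assignments are satisfied by the same number of expressions, namely a fraction $1/2^n$ of the total number of pairs. Dividing this quantity by the number of satisfiable expressions gives the announced ratio for a fixed input; for a uniformly random input the probability is the average of these fixed-input probabilities over the $2^n$ assignments, which by the same symmetry equals their common value, explaining why ``fixed'' and ``random'' coincide.

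For the asymptotics I would rewrite both coefficients through the weighted-multigraph quantities $n!\,[z^m v^n]M^{\sigma}(z,v)$ of Lemma~\ref{th:sigmacritical}, using the scaling identity $[z^m v^n]F(4z,2v)=4^m 2^n\,[z^m v^n]F(z,v)$. The numerator corresponds to $\sigma=1$, for which $n!\,[z^m v^n]M(z,v)=\M=n^{2m}/(2^m m!)$ is exact; the denominator, being $\sqrt{M(4z,2v)}=M^{1/2}(4z,2v)$, corresponds to $\sigma=1/2$. For $m/n$ in a fixed closed subinterval of $]0,1/2[$ the first estimate of Lemma~\ref{th:sigmacritical} yields $n!\,[z^m v^n]M^{1/2}(z,v)\sim\frac{n^{2m}}{2^m m!}\,2^{m-n}\,(1-2m/n)^{1/4}$. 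Substituting both values into the exact formula, the common factors $n^{2m}/(m!\,n!)$ and all powers of $2$ cancel, leaving exactly $\frac{1}{2^m}(1-2m/n)^{-1/4}$.

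In the critical window $m=\frac n2(1+\mu n^{-1/3})$ I would keep the exact numerator and feed the second estimate of Lemma~\ref{th:sigmacritical}, with $\sigma=1/2$, into the denominator, and then repeat the same cancellation. Once the powers of $2$ collapse, the surviving factors are the scaling exponent $n^{(\sigma-1)/6}=n^{-1/12}$ and the Airy-type series $\sqrt{2\pi}\sum_{r\ge0}2^{-r}e_r^{(1/2)}A(3r+1/4,\mu)$ coming from the lemma, which together give $\frac{n^{1/12}}{2^m}$ times the reciprocal of that series. The step that demands the most care is precisely this final bookkeeping: one must track simultaneously the substitution factors $4^m$ and $2^n$, the weight $\sigma^{n-m}=2^{m-n}$, and the scaling exponent $n^{(\sigma-1)/6}$ furnished by the lemma, and verify that they combine into the clean prefactor $n^{1/12}/2^m$. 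The genuinely combinatorial input, by contrast, is confined to the flip-symmetry argument underlying the exact formula, which I expect to be the conceptual heart of the proof.
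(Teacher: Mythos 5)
Your proposal is correct and follows essentially the same route as the paper: the exact formula comes from the same double counting of pairs $\{$satisfiable expression, satisfying assignment$\}$ against satisfiable expressions, with the generating functions $M(4z,2v)$ and $\sqrt{M(4z,2v)}$ inherited from the proof of Theorem~\ref{th:proba(sat)}, and the asymptotics from Lemma~\ref{th:sigmacritical} with $\sigma=1$ and $\sigma=1/2$. Two remarks: your flip-involution argument makes explicit the ``fixed or random input'' equivalence that the paper's proof leaves essentially unjustified, and your critical-window constant $\sqrt{2\pi}\sum_{r} 2^{-r} e_r^{(1/2)} A(3r+1/4,\mu)$ is the one actually produced by Lemma~\ref{th:sigmacritical} --- the $e_r^{(2)}$ and the missing $\sqrt{2\pi}$ in the theorem's statement appear to be typographical slips, so this discrepancy is not an error on your side.
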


\begin{proof}
The probability that a random expression is satisfied 
by a random assignment is equal to the number of pairs 
$\{$satisfiable expression, satisfying assignment$\}$, 
divided by the number of satisfiable expressions 
and by the number $2^n$ of assignments.
The exact value follows from the fact that 
the generating functions for the number of satisfiable expressions 
and for the number of pairs $\{$satisfiable expression, satisfying assignment$\}$
are respectively $\sqrt{M(4z,2v)}$ and $M(4z,2v)$;
the asymptotic approximations come again 
from Lemma~\ref{th:sigmacritical}.
\end{proof}

    \subsection{Probability of a Given $2$-Xor Function}
    \label{sec:res-general}

We now refine the probability of satisfiability, by computing the probability of a specific Boolean function $\neq \false$.
We first give in Proposition~\ref{prop:fgs} the generating functions for all Boolean functions
(except again $\false$), then use it to provide a general expression for the probability of a
Boolean function in Theorem~\ref{th:proba-f}, or rather of all the functions of an equivalence class~$C_\ii$.
This theorem is at a level of generality that does not give readily precise probabilities, and we delay until Section~\ref{sec:results} such examples of asymptotic probabilities.

\begin{proposition}
\label{prop:fgs}
  Let $f$ denote a Boolean function in~$\xx$
  and $\ii(f)$ the corresponding integer partition.
  Define $\phi_{\ii(f)} (z)$ as the generating function 
  for Boolean expressions that compute~$f$:
  \[
    \phi_{\ii(f)} (z)= \sum_m \N(f) \frac{z^m}{m!}.
  \]
  When $\ii = \ii_{\bf max} (n)$, we set $\phi_n(z) := \phi_{\ii_{\bf max}(n)}(z)$.
  Then
  \begin{eqnarray*}
    \phi_n (z)= n! [v^n] C(4z,v) ;
    \qquad
    \phi_{\ii(f)} (z) = \prod_{\ell \geq 1} \left( \ell! [v^{\ell}] C(4z,v) \right)^{i(f)_\ell}.
  \end{eqnarray*} 
\end{proposition}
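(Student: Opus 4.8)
The plan is to revisit the colored-multigraph bijection of Proposition~\ref{fg-bijection} and to determine, for a \emph{fixed} satisfiable function~$f$, exactly which colorings of the underlying multigraph yield an expression computing~$f$. Fix a block representation of~$f$ in the sense of Proposition~\ref{th:block_representation}. I first observe that every clause of an expression computing~$f$ must lie inside a single block: a clause joining two distinct blocks would connect their vertices into one component of the multigraph and hence force the corresponding variables into a common equivalence class, contradicting the block structure of~$f$. Consequently the multigraph splits into connected components indexed by the blocks of~$f$, and the number of components is $\xi(\ii(f))$, as already recorded in Proposition~\ref{fg-bijection}.

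Next I would pin down the admissible colors. Within a block the polarities prescribed by~$f$ determine, for each ordered pair of vertices, whether the two variables must be equal or opposite; of the eight colors of an edge exactly four are compatible with this relation (e.g.\ for $x_i = x_j$ the colors $x_i\oplus\bar x_j$, $\bar x_j\oplus x_i$, $\bar x_i\oplus x_j$, $x_j\oplus\bar x_i$). A loop may only be tautological, leaving the two colors $x\oplus\bar x$ and $\bar x\oplus x$; the forbidden loops $x\oplus x$, $\bar x\oplus\bar x$ would compute $\false$. The key point is that these two restrictions are both encoded by the single substitution $z\mapsto 4z$ in $M(z,v)$: since in the plain multigraph model an ordinary edge has compensation factor~$1$ and a loop has compensation factor~$\tfrac12$, this substitution gives an edge the weight~$4$ and a loop the weight $\tfrac12\cdot 4 = 2$, matching respectively the four admissible edge-colors and the two admissible loop-colors. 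Hence $M(4z,v)$ is the generating function of colored multigraphs all of whose edges and loops are colored compatibly with a fixed block structure.

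From here the single-block case follows immediately. When $\ii = \ii_{\bf max}(n)$ the function~$f$ is one block of size~$n$, the multigraph is connected, and by the previous paragraph the admissibly colored connected multigraphs on $n$ labeled vertices are generated by $C(4z,v)=\log M(4z,v)$; extracting the labeled count gives $\phi_n(z)=n!\,[v^n]C(4z,v)$. For the general case I would treat each block separately: a block on a prescribed set of~$\ell$ variables is an admissibly colored \emph{connected} multigraph on those~$\ell$ vertices, whose clause-exponential generating function is $\ell!\,[v^\ell]C(4z,v)$ (in particular, for $\ell=1$ this equals $e^{2z}$, the generating function of sequences of tautological loops on a singleton, confirming the treatment of isolated variables). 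Because an expression computing~$f$ is a \emph{sequence} of clauses whose positions are freely interleaved among the blocks, the generating function $\phi_{\ii(f)}(z)$, being exponential in the number~$m$ of clauses, is the product of the per-block generating functions; grouping the $i(f)_\ell$ blocks of each size~$\ell$ yields $\phi_{\ii(f)}(z)=\prod_{\ell\ge1}\bigl(\ell!\,[v^\ell]C(4z,v)\bigr)^{i(f)_\ell}$.

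The step I expect to be the crux is the second one: verifying that the \emph{same} scalar substitution $z\mapsto 4z$ simultaneously realizes the four admissible edge-colors and the two admissible loop-colors. This works precisely because the ratio of loop-colors to edge-colors ($4{:}8$ for all expressions, $2{:}4$ for a fixed function) matches the loop compensation factor~$\tfrac12$, and I would make this matching explicit by comparing the linear ($z^1$) coefficients of $M(4z,v)$ against a direct count of single admissible edges and loops, exactly as in the derivation of $M(8z,v)$ in Proposition~\ref{fg-bijection}. The remaining care is combinatorial bookkeeping: justifying that no cross-block clause may occur and that the exponential product correctly accounts for the shuffling of clause-positions, both of which are routine once the color restriction is in place.
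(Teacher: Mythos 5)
Your proof is correct and follows essentially the same route as the paper's: restrict to the two tautological loop colors and the four polarity-compatible edge colors within each block (encoded by the substitution $z \mapsto 4z$), identify the single-block case with connected multigraphs via $C(4z,v)$, and obtain the general case as a product over the connected components. The only difference is one of detail — you make explicit the compensation-factor bookkeeping (loops weighted $\tfrac12$, edges weighted $1$), the exclusion of cross-block clauses, and the shuffle argument behind the product of exponential generating functions, all of which the paper treats as immediate.
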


\begin{proof}
A canonical representant of the class $\ii_{\bf max} (n)$ is the function $x_1 \sim \cdots \sim x_n$.
Any expression that computes it corresponds to a connected multigraph, where we only allow the 2 types of loops that compute $\true$ and the 4 types of edges between $x_i$ and $x_j$ ($i \neq j$) that compute $x_i \sim x_j$; this gives readily the expression of $\phi_n(z)$.

As for functions whose associated multigraphs have several components, such multigraphs are a product of connected components; hence the global generating function is itself the product of the generating functions for each component.
\end{proof}

\begin{theorem}
\label{th:proba-f}
\begin{enumerate}
\item
  The probability that a random expression of $m$ clauses on $n$ variables 
  computes the function $x_1 \sim \cdots \sim x_n$~is
  \[
  \proba (x_1 \sim \cdots \sim x_n) = 
  \frac{ m! n! [z^m v^n] C(4z,v)}{m! n! [z^m v^n] M(8z,v)}
  =
  \frac{m!}{n^{2m}} \; n! [v^n] C_{m-n}(v).
  \]

\item
  Let $f$ be a function of $\xx$, with $q= \sum_\ell \ii(f)_\ell$, 
  and $B_1, \ldots, B_q$ be the blocks of $\ii(f)$, 
  with $r_j$ ($1\leq j \leq q$) the excess of the block~$B_j$.
  The probability that a random expression of $m$ clauses on $n$ variables computes~$f$~is
  \begin{eqnarray*}
    \proba (f) &=& 
    \frac{m!}{n^{2m}} \;
    \sum_{ \substack{r_1, \ldots ,r_q \geq -1 \\ r_1+ \cdots +r_q=m-n} } \;
    \prod_{j=1}^q
    |B_j|! [ v^{|B_j|}]  C_{r_j}(v),
  \end{eqnarray*}
  where~$C_r(v)$ denote the generating function
  of connected multigraphs of excess~$r$,
  defined in Notation~\ref{th:CmnCr}.
\end{enumerate}
\end{theorem}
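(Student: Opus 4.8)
The plan is to assemble the probability $\proba(f) = \N(f)/\N$ directly from the two generating functions already established in the excerpt. For the denominator, Proposition~\ref{fg-bijection} gives $\N = m!\, n!\, [z^m v^n] M(8z,v) = n^{2m}$, so the entire task reduces to extracting $\N(f)$, the number of expressions computing $f$. By Proposition~\ref{prop:fgs} the generating function for such expressions is $\phi_{\ii(f)}(z) = \sum_m \N(f)\, z^m/m!$, whence $\N(f) = m!\, [z^m] \phi_{\ii(f)}(z)$. Dividing through yields $\proba(f) = (m!/n^{2m})\, [z^m]\phi_{\ii(f)}(z)$, and both parts of the theorem will follow by evaluating this coefficient.

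For part~1 I would specialise to $f = (x_1 \sim \cdots \sim x_n)$, i.e. $\ii = \ii_{\bf max}(n)$. Proposition~\ref{prop:fgs} already states $\phi_n(z) = n!\,[v^n] C(4z,v)$, so $\N(x_1\sim\cdots\sim x_n) = m!\, n!\, [z^m v^n] C(4z,v)$. The first displayed equality is then immediate upon dividing by $\N = m!\,n!\,[z^m v^n] M(8z,v)$. For the second equality I would use the excess decomposition $C(4z,v) = \sum_{r\ge -1} (4z)^r\, C_r(4zv)$ from Section~\ref{EsCM}: since the expression has $n$ variables and $m$ clauses, the relevant component has excess $r = m-n$, and extracting $[z^m v^n]$ isolates the single term $C_{m-n}$. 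A short bookkeeping check on the powers of $z$, $v$ and the factor $4$ (each loop or edge carrying the colour weight) converts $[z^m v^n]\,(4z)^{m-n} C_{m-n}(4zv)$ into $n!\,[v^n] C_{m-n}(v)/n^{2m}$ after multiplying by $m!$, giving the stated closed form.

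For part~2 the key structural input is that the multigraph associated with a general $f$ splits into its $q$ connected components $B_1,\dots,B_q$, and Proposition~\ref{prop:fgs} factorises the generating function accordingly: $\phi_{\ii(f)}(z) = \prod_{j=1}^q \bigl(|B_j|!\,[v^{|B_j|}] C(4z,v)\bigr)$. I would rewrite each factor using the excess decomposition of $C(4z,v)$, so that the $j$-th factor becomes a sum over the excess $r_j \ge -1$ of its component. Forming the product and collecting the total power of $z$, the coefficient extraction $[z^m]$ forces the constraint $r_1 + \cdots + r_q = m-n$ (because the total edge count is $m$ and the total vertex count is $n$, so the total excess is $m-n$). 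This turns the single coefficient into the announced sum over compositions $(r_1,\dots,r_q)$ of $m-n$, each term being the product $\prod_j |B_j|!\,[v^{|B_j|}] C_{r_j}(v)$. Multiplying by $m!/n^{2m}$ delivers the formula.

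The main obstacle will be the careful tracking of the colour weight~$4$ and the change of variable from $C(4z,v)$ to the excess-indexed $C_r(v)$: one must verify that the substitution $z\mapsto 4z$ and the grouping $C_r(4zv)$ combine so that each edge and loop carries exactly the intended factor, and that after the $[z^m v^n]$ extraction the powers of $4$, of $n$, and the factorials reassemble into $m!/n^{2m}$ times $\prod_j |B_j|![v^{|B_j|}]C_{r_j}(v)$ with no stray constants. This is routine but error-prone accounting; once the single-component case of part~1 is pinned down, part~2 is essentially a multiplicative lift of the same computation, with the excess-sum constraint emerging automatically from matching the $z$-degree.
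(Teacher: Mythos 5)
Your overall route is exactly the paper's: write $\proba(f)$ as $m!\,[z^m]\phi_{\ii(f)}(z)$ divided by the total number of expressions, factorize $\phi_{\ii(f)}$ over the blocks via Proposition~\ref{prop:fgs}, and expand $C(4z,v)=\sum_{r\ge -1}(4z)^r C_r(4zv)$ so that matching the $z$-degree forces $r_1+\cdots+r_q=m-n$. Both key inputs and the order of the steps coincide with the paper's proof.

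There is, however, one concrete error in the accounting, located precisely at the spot you yourself flag as error-prone, and as written it breaks the derivation. You claim
\[
\N \;=\; m!\,n!\,[z^m v^n] M(8z,v) \;=\; n^{2m},
\]
but since $n!\,[v^n]M(8z,v)=e^{4n^2 z}$, the correct value is $\N=(4n^2)^m=4^m n^{2m}$ (this is also the paper's definition of $\N$). Hence the correct starting point is $\proba(f)=\frac{m!}{4^m n^{2m}}\,[z^m]\phi_{\ii(f)}(z)$, not $\frac{m!}{n^{2m}}\,[z^m]\phi_{\ii(f)}(z)$. This matters because the excess decomposition produces powers of $4$ in the numerator: one has
\[
|B_j|!\,[z^{m_j} v^{|B_j|}]\,C(4z,v)\;=\;4^{m_j}\,|B_j|!\,[v^{|B_j|}]\,C_{r_j}(v),
\qquad r_j=m_j-|B_j|,
\]
so the product over $j$ carries $4^{m_1+\cdots+m_q}=4^m$ (and similarly $[z^m v^n]C(4z,v)=4^m\,[v^n]C_{m-n}(v)$ in part~1, not the expression involving $n^{-2m}$ that you state). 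With the correct denominator $4^m n^{2m}$ these factors cancel exactly and the theorem's formulas follow; with your denominator $n^{2m}$ the chain of equalities you set up yields an answer that is too large by a factor $4^m$, so the claimed conclusion does not follow from the displayed steps. The fix is a one-line correction of $\N$, after which your argument (including the constraint $\sum_j r_j=m-n$ from matching $z$-degrees) goes through verbatim as in the paper.
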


\begin{proof}
The probability $\proba(f)$ that an expression of~$m$ clauses on~$n$ variables
computes a function~$f$
is the quotient of the number of corresponding expressions
divided by the total number of expressions
\[
  \proba(f) = 
  \frac{ m! [z^m] \phi_{\ii(f)}(z)}
    {m! n! [z^m v^n] \MG(8z,v)}.
\]
For $f = x_1 \sim \cdots \sim x_n$,
we obtain the first part of the theorem
by substitution of the expression of~$\phi_{\ii(f)}$,
derived in Proposition~\ref{prop:fgs}.
More generally, we have
\[
  \phi_{\ii(f)} (z) = 
  \prod_{\ell \geq 1} 
  \left( \ell! [ v^\ell] C(4z,v) \right)^{i(f)_\ell}.
\]
By definition, $i(f)_\ell$ is the number of
blocks of~$f$ of size~$\ell$,
so the previous equation can be rewritten
\[
  \phi_{\ii(f)} (z) = 
  \prod_{j=1}^q
  |B_j|! [v^{|B_j|}] C(4z,v),
\]
and
\begin{equation} \label{eq:phiextraction}
  m! [z^m] \phi_{\ii(f)} (z) =
  m!
  \sum_{m_1 + \cdots + m_q = m}
  \prod_{j=1}^q
  |B_j|! [z^{m_j} v^{|B_j|}] C(4z,v).
\end{equation}
The generating function~$C(z,v)$ can be expanded with respect to the excesses
\[
  C(z,v) = \sum_{r \geq -1} z^r C_r(v z),
\]
so
\begin{equation} \label{eq:Crextraction}
  |B_j|! [ z^{m_j} v^{|B_j|}] C(4z,v)
  =
  4^{m_j}
  |B_j|! [ v^{|B_j|}]
  C_{r_j}(v),
\end{equation}
where $r_j = m_j - |B_j|$.
We obtain the second part of the theorem
by combination of Equations~\eqref{eq:phiextraction} and~\eqref{eq:Crextraction}.
\end{proof}

\section{Explicit Probabilities}
\label{sec:results}

We now show on examples how Theorem \ref{th:proba-f} allows us to compute the asymptotic probability of a specific function.
Attempting to give explicit results for each and every case that may appear is not realistic; rather we  aim at giving the reader a feeling of the kind of results our method allows to obtain and the kind of technical tools we need for obtaining precise asymptotics.

We consider first a fixed Boolean function $f$ and how its probability varies when $n \rightarrow +\infty$ (i.e. when we add non-essential variables),
then turn to a family of functions that vary with $n$, either with a fixed number of blocks (this includes functions that are ``close to'' $\false$ in the sense that they have few blocks, hence few satisfying assigments), or with a number of blocks that grows with~$n$ (e.g., $\frac{n}{j}$ blocks of size $j$ for some $j\geq 2$).

\subsection{Probability of a fixed function}

We compute here the probability of any specific function, once it can be obtained, and see how it varies when $n, m \rightarrow + \infty$ with fixed ratio~$\alpha$.

\begin{proposition}
\label{prop:fixed-function}
Let $f \in \xx_n$, with $e(f)$ being the number of its essential variables, and $\ii = \ii (f)=(i_1, i_2,\dots)=(n-e(f), i_2,\dots)$ its associated integer partition. Assume $m=\alpha n \geq n - \xi( \ii(f))$; then
\[
P_{[\alpha n , n]}(f) \sim
  \frac{e^{\alpha \, e(f)}}{(2n)^{\alpha n}} \; \prod_{\ell \geq 2} 
    \left( \ell ! \phi_{\ell}\left( \frac{\alpha}{2} \right) \right)^{i_{\ell}}
    \qquad (n \rightarrow + \infty) .
\]
\end{proposition}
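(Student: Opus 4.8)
The plan is to start from the exact probability furnished by Theorem~\ref{th:proba-f} together with Proposition~\ref{prop:fgs}, namely
\[
\proba(f)=\frac{m!\,[z^m]\phi_{\ii(f)}(z)}{m!\,n!\,[z^m v^n]M(8z,v)}=\frac{m!}{(4n^2)^m}[z^m]\phi_{\ii(f)}(z),\qquad \phi_{\ii(f)}(z)=\prod_{\ell\ge1}\phi_\ell(z)^{i_\ell},
\]
and to isolate the effect of the singleton blocks. A size-$1$ block is an isolated vertex carrying only the two true-loops, so $\phi_1(z)=1!\,[v^1]C(4z,v)=e^{2z}$; since $\ii(f)$ has $i_1=n-e(f)$ singletons, they contribute the factor $e^{2(n-e(f))z}$. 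The remaining factor $\Psi(z):=\prod_{\ell\ge2}\phi_\ell(z)^{i_\ell}$ is a \emph{fixed} entire function: the blocks of size $\ge2$, hence the integers $i_\ell$ ($\ell\ge2$) and $e(f)=\sum_{\ell\ge2}\ell\, i_\ell$, are kept fixed while only $n$ (and with it $i_1$) grows. Its lowest-order term is $z^{\,e(f)-\sum_{\ell\ge2}i_\ell}$, since a nontrivial block of size $\ell$ needs at least $\ell-1$ edges to be connected; the hypothesis $m\ge n-\xi(\ii(f))=e(f)-\sum_{\ell\ge2}i_\ell$ (automatic once $m=\alpha n\to\infty$) guarantees that this lowest term is reached and the probability is nonzero.

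Next I would perform the coefficient extraction. Writing $\Psi(z)=\sum_j\psi_jz^j$ and expanding the singleton exponential gives $[z^m]\phi_{\ii(f)}(z)=\sum_j\psi_j\,(2(n-e(f)))^{m-j}/(m-j)!$, so that, using $4n^2=(2n)^2$,
\[
\proba(f)=(2n)^{-m}\Bigl(1-\tfrac{e(f)}{n}\Bigr)^{m}\sum_j\psi_j\,\fallfak{m}{j}\,\bigl(2(n-e(f))\bigr)^{-j}.
\]
Now let $n\to\infty$ with $m=\alpha n$. The prefactor satisfies $\bigl(1-e(f)/n\bigr)^{\alpha n}\to e^{-\alpha e(f)}$, and for each fixed $j$ one has $\fallfak{m}{j}\sim(\alpha n)^j$ and $\bigl(2(n-e(f))\bigr)^{-j}\sim(2n)^{-j}$, whence $\fallfak{m}{j}\bigl(2(n-e(f))\bigr)^{-j}\to(\alpha/2)^j$. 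Passing to the limit term by term turns the sum into $\sum_j\psi_j(\alpha/2)^j=\Psi(\alpha/2)$, and collecting the three ingredients gives
\[
\proba(f)\sim\frac{e^{-\alpha e(f)}}{(2n)^{\alpha n}}\,\Psi\!\left(\tfrac{\alpha}{2}\right)=\frac{e^{-\alpha e(f)}}{(2n)^{\alpha n}}\prod_{\ell\ge2}\phi_\ell\!\left(\tfrac{\alpha}{2}\right)^{i_\ell}.
\]
This reproduces the announced structure: the leading power $(2n)^{-\alpha n}$, a scalar exponential in $\alpha\,e(f)$ issuing from the singletons, and the product of the per-block generating functions $\phi_\ell$ evaluated at $\alpha/2$; the scalar factor arises precisely because there are $n-e(f)$ singletons rather than $n$, consistently with the exact identity $\proba(\true)=(2n)^{-m}$ obtained when $e(f)=0$ and $\Psi\equiv1$.

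The only genuine obstacle is justifying the interchange of the limit $n\to\infty$ with the infinite sum over $j$, and for this I would invoke dominated convergence. Since $\frac{m}{2(n-e(f))}=\frac{\alpha n}{2(n-e(f))}\to\frac{\alpha}{2}$, for every $\epsilon>0$ and all large enough $n$ one has the bound
\[
\fallfak{m}{j}\,\bigl(2(n-e(f))\bigr)^{-j}=\prod_{t=0}^{j-1}\frac{m-t}{2(n-e(f))}\le\Bigl(\tfrac{\alpha}{2}+\epsilon\Bigr)^{j},
\]
valid simultaneously for all $j$. Because $\Psi$ is entire, $\sum_j|\psi_j|\bigl(\tfrac{\alpha}{2}+\epsilon\bigr)^j<\infty$, which provides a summable dominating series and legitimizes the termwise passage to the limit. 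Everything else — the factorization of $\phi_{\ii(f)}$, the evaluation $\phi_1(z)=e^{2z}$, and the elementary limits $\fallfak{m}{j}\bigl(2(n-e(f))\bigr)^{-j}\to(\alpha/2)^j$ and $(1-e(f)/n)^{\alpha n}\to e^{-\alpha e(f)}$ — is routine bookkeeping around the exact formula of Theorem~\ref{th:proba-f}.
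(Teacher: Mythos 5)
Your argument is correct, and it takes a genuinely different route from the paper's own proof. The paper never manipulates the single function $f$: it first writes down the number of expressions computing \emph{some} function of the whole class $\classe_{\ii}$, extracts $[z^m]$ of that class generating function asymptotically by the saddle-point method for large powers (\cite[Th.~VIII.8]{FlajoletSedgewick}, applied to the factor $e^{2 i_1 z}$), simplifies via Stirling's formula, and only then divides by the cardinality $|\classe_{\ii}|$ from Proposition~\ref{prop:classes} and by the total count $(4n^2)^m$. You instead keep $f$ fixed throughout and exploit the fact that the ``large power'' is literally an exponential, so that $[z^m]e^{2i_1 z}\Psi(z)=\sum_j \psi_j (2i_1)^{m-j}/(m-j)!$ is an exact convolution, with $\Psi(z)=\prod_{\ell\ge2}\phi_\ell(z)^{i_\ell}$ a fixed entire function; the only analytic issue is then the interchange of limit and summation, which your uniform bound $\fallfak{m}{j}\,\bigl(2(n-e(f))\bigr)^{-j}\le(\alpha/2+\epsilon)^j$ together with the summability of $\sum_j|\psi_j|(\alpha/2+\epsilon)^j$ settles by dominated convergence. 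Your route is more elementary (no saddle-point hypotheses to verify) and bypasses the class-cardinality bookkeeping entirely; the paper's route has the advantage that it would survive unchanged if the singleton factor were not an explicit exponential.

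There is one point you should state openly rather than call ``the announced structure'': what you prove is $\proba(f)\sim e^{-\alpha e(f)}(2n)^{-\alpha n}\prod_{\ell\ge2}\phi_\ell(\alpha/2)^{i_\ell}$, which is \emph{not} the displayed statement --- the statement has $e^{+\alpha e(f)}$ and extra factors $\ell!$. The discrepancy lies in the paper, not in your proof. The paper's own proof ends with the exponent $e^{-(s(\ii)-i_1)m/n}=e^{-\alpha e(f)}$, so the plus sign in the statement contradicts its own derivation; and under the convention of Proposition~\ref{prop:fgs}, where $\phi_\ell(z)=\ell!\,[v^\ell]C(4z,v)$ already contains the $\ell!$, the additional $\ell!$ in the statement is a bookkeeping slip of the class-level computation (the division by $|\classe_{\ii}|$ injects $(\ell!)^{i_\ell}$, while the companion constant $2^{\xi(\ii)-n}$ that it also produces is silently dropped in the paper's final step). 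A two-line check confirms your version: for $f=(x_1\sim x_2)$ one has exactly $\N(f)=(2n+4)^m-(2n)^m$, hence $\proba(f)\sim(e^{2\alpha}-1)(2n)^{-\alpha n}$, which agrees with your $e^{-2\alpha}\phi_2(\alpha/2)(2n)^{-\alpha n}$ since $\phi_2(z)=e^{8z}-e^{4z}$, but not with $e^{+2\alpha}\,2!\,\phi_2(\alpha/2)(2n)^{-\alpha n}$. So your proposal proves the corrected form of the proposition.
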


\begin{proof}
Let~$\vect{i}= \ii(f)$ be an integer partition with~$s(\vect{i}) = n$
and for all~$\ell \geq 2$, let~${i}_{\ell}$ be fixed, independent of~$n$.
The number of expressions with~$n$ variables and~$m$ clauses
that correspond to Boolean functions in~$\classe_\ii$ is then (cf: Proposition~\ref{prop:fgs})
\[
  n! m! [ z^m ] \frac{e^{\vect{i}_1 2 z}}{\vect{i}_1!} 
  \prod_{\ell \geq 2} \frac{\phi_{\ell}^{\vect{i}_{\ell}}(z)}{\vect{i}_{\ell}!}.
\]

We derive an asymptotic equivalent by the saddle point method for a \emph{large power} scheme, 
assuming that~$\alpha = \frac{m}{n}$ is bounded (\cite[Th.~VIII.8 p.~587]{FlajoletSedgewick}).
We get
\[
 m! \; \frac{n!}{\vect{i}_1 !}
  \left( \frac{2 e n}{m} \right)^m 
  \frac{1}{\sqrt{2 \pi m}} 
  e^{-(s(\vect{i}) - \vect{i}_1) m/n} 
  \prod_{\ell \geq 2} 
    \frac{\phi_{\ell}^{\vect{i}_{\ell}}\left( \frac{m}{2n} \right)}{\vect{i}_{\ell}!}
  ( 1 + \smallo(1) ).
\]
Using Stirling's formula, this can be rewritten as
\[
  \frac{n!}{\vect{i}_1 !}   (2n)^m 
  e^{-(s(\vect{i}) - \vect{i}_1) m/n} 
  \prod_{\ell \geq 2} 
    \frac{\phi_{\ell}^{\vect{i}_{\ell}}\left( \frac{m}{2n} \right)}{\vect{i}_{\ell}!}
  ( 1 + \smallo(1) ).
\]
By division by~$|\classe_\ii| = 2^{n - \xi(\vect{i})} \frac{n!}{ \prod_{\ell \geq 2} \vect{i}_\ell! (\ell!)^{\vect{i}_\ell}}$, we obtain the number of expressions
that correspond to any given function in~$\classe_\ii$:
\[
  2^{\xi(\vect{i}) - n} (2n)^m 
  e^{-(s(\vect{i}) - \vect{i}_1) m/n} 
  \prod_{\ell \geq 2} 
    \left( \ell ! \phi_{\ell}\left( \frac{m}{2n} \right) \right)^{\vect{i}_{\ell}}.
\]
We finally divide by the number of $(n,m)$-expressions, $4^m n^{2m}$, to obtain the asymptotic probability that a random expression with~$n$ variables and~$m$ clauses corresponds to the given Boolean function~$f$ described by 
the integer partition~$\vect{i}$:
\[
  \frac{ e^{-(s(\vect{i}) - {i}_1) m/n} }{ (2n)^m } 
  \prod_{\ell \geq 2} 
    \left( \ell ! \phi_{\ell}\left( \frac{m}{2n} \right) \right)^{\vect{i}_{\ell}}.
\]
The final form comes from the fact that $s(\ii)-i_1$ is precisely the number of essential variables of~$f$.
\end{proof}

\subsection{Asymptotics for a single-block function} 
\label{singleblock}

All Boolean variables are in a single block: We consider the class of $ x_1 \sim ... \sim x_n$ and the range $m\geq n-1$.
From Theorem~\ref{th:proba-f} , we have 
\[
\proba(x_1 \sim ... \sim x_n) = \frac{m!}{n^{2m}} . n! [ v^n ] C_{m-n}(v).
\]
We now specialize this according to the possible values for the excess $r=m-n$ and obtain the
\begin{proposition}
\begin{enumerate}
\item
For $r=-1$, we have
$\proba(x_1 \sim ... \sim x_n) =
\frac{(n-1)!}{n^{n}} \sim \sqrt{\frac{2\pi}{n}} \; e^{-n}$ .

\item
For $r=0$, we get 
$\proba(x_1 \sim ... \sim x_n)\sim \frac{\pi}{2} \; e^{-n} $.

\item
For $ r \geq 1$ but still fixed, 
$\proba(x_1 \sim ... \sim x_n) \sim C_r \, e^{-n} n^{r/2}$
where $c_r = \sqrt{2\pi} \, e^{-r} \, K_r$ with $K_r$ as in Theorem~\ref{th:connected-multigraphs-fixed-excess}.

\item
For~$r \rightarrow \infty$ and~$r = \smallo(\sqrt{n})$,
$\proba(x_1 \sim ... \sim x_n) \sim
\sqrt{\frac{3}{2}} \frac{e^{r/2}}{(2\sqrt{3})^r} e^{-n} \left(\frac{n}{r}\right)^{r/2}$.

\item
For~$r = (\alpha-1) n$ with $\alpha > 1$,
$\proba(x_1 \sim ... \sim x_n) \sim
K \left( \frac{\alpha^{\alpha-1} \cosh \zeta}{(2 \zeta)^{\alpha-1} e^\alpha} \right)^n$
where~$\zeta \coth \zeta = \alpha$
and~$K = \sqrt{\alpha} \frac{e^{2\zeta}-1-2\zeta}{\sqrt{\zeta(e^{4\zeta}-1-4\zeta e^{2\zeta})}}$.

\item
When~$r \rightarrow + \infty$ and~$2m/n - \log(n)$ is bounded, then
$$\proba(x_1 \sim ... \sim x_n) \sim
\frac{K}{(2 \zeta)^r} \left( \frac{\sinh \zeta}{\zeta} \right)^n \frac{\left(
1+r/n\right)^{n+r+1/2}}{e^{n+r}}$$
with~$\zeta$  the positive solution of~$\zeta \coth \zeta = 1 + \frac{r}{n}$
and~$K = \frac{e^{2\zeta}-1-2\zeta}{\sqrt{\zeta(e^{4\zeta}-1-4\zeta e^{2\zeta})}}$.

\item
Finally, when~$2m/n - \log(n) \rightarrow + \infty$ as~$n \rightarrow + \infty$,
$\proba(x_1 \sim ... \sim x_n) \sim \frac{1}{2^m}$
because almost all multigraphs are connected.

\end{enumerate}
\end{proposition}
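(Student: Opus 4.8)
The plan is to handle all seven ranges through a single master formula and then specialize the asymptotics of the connected-multigraph count according to the size of the excess $r = m-n$. Starting from the identity established just above the statement, namely
\[
\proba(x_1 \sim \cdots \sim x_n) = \frac{m!}{n^{2m}}\, n! [v^n] C_{m-n}(v) = \frac{m!}{n^{2m}}\, C_{m,n},
\]
the whole proof reduces to inserting the appropriate estimate for $C_{m,n}$ drawn from Theorems~\ref{th:connected-multigraphs-fixed-excess}, \ref{th:multigraphs-large-excess} and~\ref{connected_excess-infinite}, and then simplifying the resulting product with Stirling's formula applied to $m!$.

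For the first three items, where $r$ is fixed, I would use Theorem~\ref{th:connected-multigraphs-fixed-excess} (together with the explicit tree and unicyclic formulas recalled in its proof for $r=-1$ and $r=0$). Writing $m = n+r$ and using $(n+r)! \sim \sqrt{2\pi n}\, n^{n+r} e^{-n}$, the powers of $n$ coming from $m!$, from $n^{2m}$ and from the factor $n^{n+\frac{3r-1}{2}}$ of the theorem combine so that the net power of $n$ is $r/2$; this produces $\proba \sim C_r\, e^{-n} n^{r/2}$ with a constant determined by $K_r$ for $r \ge 1$, and, after substituting $K_{-1}=1$ and $K_0 = \sqrt{2\pi}/4$, the closed forms $\sqrt{2\pi/n}\,e^{-n}$ and $\frac{\pi}{2}e^{-n}$ for items~(1) and~(2).

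Items~(4), (5) and~(6) all lie in the regime where $\frac{2m}{n} - \log n$ is bounded or tends to $-\infty$, so they are governed by Theorem~\ref{th:multigraphs-large-excess}. Setting $\zeta = \lambda/2$, so that $\zeta\coth\zeta = m/n = 1 + r/n$, and feeding the estimate of the theorem into the master formula, the factor $n^m$ from the theorem and the leading power $n^m$ from Stirling's estimate of $m!$ cancel against $n^{2m}$, leaving
\[
\proba \sim K_0 \,(1 + r/n)^{n+r+1/2}\, e^{-(n+r)}\,\frac{(\sinh\zeta/\zeta)^n}{(2\zeta)^r},
\qquad K_0 = \frac{e^{2\zeta}-1-2\zeta}{\sqrt{\zeta(e^{4\zeta}-1-4\zeta e^{2\zeta})}}.
\]
This is already item~(6). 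Item~(5), with $\alpha = m/n$ fixed, follows by keeping $\zeta$ constant and re-collecting the constant-to-the-power-$n$ factors; the relation $\zeta\coth\zeta = \alpha$ is then used to rewrite $\frac{\alpha^\alpha\, 2\sinh\zeta}{e^\alpha (2\zeta)^\alpha}$ as $\frac{\alpha^{\alpha-1}\cosh\zeta}{e^\alpha (2\zeta)^{\alpha-1}}$, while the extra $\sqrt\alpha$ in the constant $K$ comes from the ratio $\sqrt{2\pi\alpha n}/\sqrt{2\pi n}$. Item~(7) is immediate: for $\frac{2m}{n}-\log n \to +\infty$ Theorem~\ref{connected_excess-infinite} gives $C_{m,n} \sim n^{2m}/(2^m m!)$, and the master formula collapses to $2^{-m}$.

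The delicate case, and the one I expect to be the main obstacle, is item~(4), where $r \to \infty$ but $r = \smallo(\sqrt n)$, forcing $\zeta \to 0$. Here one must expand the estimate of the previous paragraph for small $\zeta$, using $\zeta\coth\zeta = 1 + \zeta^2/3 + \bigO(\zeta^4)$ to get $\zeta \sim \sqrt{3r/n}$. The subtlety is that several factors are raised to the power $n$ while $\zeta \to 0$, so the expansions must be carried to the right order and the error terms controlled: one checks that $K_0 \to \sqrt{3/2}$, that $(\sinh\zeta/\zeta)^n \sim e^{n\zeta^2/6} \sim e^{r/2}$, that $(2\zeta)^{-r} \sim (2\sqrt 3)^{-r}(n/r)^{r/2}$, and that $(1+r/n)^{n+r+1/2}e^{-(n+r)} \sim e^{-n}$. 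The hypothesis $r = \smallo(\sqrt n)$ is precisely what forces the neglected corrections (of order $n\zeta^4 \asymp r^2/n$ in the exponent) to tend to zero; assembling these four estimates produces $\sqrt{3/2}\,\frac{e^{r/2}}{(2\sqrt3)^r}\,e^{-n}(n/r)^{r/2}$, which is item~(4).
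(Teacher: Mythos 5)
Your proposal is correct and follows exactly the route of the paper's own (very terse) proof: the paper likewise obtains all seven cases by inserting into the master formula $\proba(x_1\sim\cdots\sim x_n)=\frac{m!}{n^{2m}}C_{m,n}$ the estimates of Theorem~\ref{th:connected-multigraphs-fixed-excess} for the first three items, Theorem~\ref{th:multigraphs-large-excess} (of which items~4 and~5 are subcases of item~6) for the next three, and Theorem~\ref{connected_excess-infinite} for the last. Your worked-out details (the Stirling cancellations, the $\zeta=\lambda/2$ substitution, the identity $\alpha^{\alpha}\sinh\zeta/\zeta=\alpha^{\alpha-1}\cosh\zeta$, and the small-$\zeta$ expansion whose error terms $O(r^2/n)$ vanish precisely when $r=\smallo(\sqrt n)$) are all sound, and in fact your computation for item~3 yields the constant $\sqrt{2\pi}\,K_r$, consistent with items~1 and~2, rather than the paper's stated $\sqrt{2\pi}\,e^{-r}K_r$.
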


\begin{proof}
We simply use the expressions for the coefficients of $C_r(v)$ given in Section~\ref{sec:asymptotics-multigraphs}.
The first three cases come from Theorem~\ref{th:connected-multigraphs-fixed-excess}, the next two cases are subcases of the sixth case which comes from Theorem~\ref{th:multigraphs-large-excess}, and the last case comes from Theorem~\ref{connected_excess-infinite}.
\end{proof}

    \subsection{Asymptotics for a two-blocks function}
\label{twoblocks}

We consider a function in the class of $x_1 \sim ... \sim x_p$, $x_{p+1} \sim ... \sim x_n$ (the block sizes are $p$ and $n-p$), which has cardinality 
$2^{n-2} \frac{n!}{p! (n-p)!}$.
We are again in range~C: $m \geq n-2$, i.e.  $r\geq -2$.
Theorem~\ref{th:proba-f} gives the generating function as
$$
\phi_\jj (z) = 
p! [ v^p ] C(4z,v) \,\cdot\, (n-p)! [ w^{n-p} ] C(4z,w),
$$
from which we readily obtain that
$$
\proba(f) =
\frac{m!}{n^{2m}} \; \sum_{d=-1}^{r+1} p! [ v^p ]  C_d(v) \,\cdot\, (n-p)! [ w^{n-p} ] C_{r-d}(w).
$$
Its asymptotics varies with the excess $r = m-n$,
and the sizes of the two blocks.
In the following propositions, we consider several cases,
depending of the respective sizes of the blocks
and the excess corresponding to the underlying multigraph.
The proofs are then presented in Sections~\ref{sec:two_blocks_large_excess} and~\ref{sec:two_blocks_fixed_excess}.

\begin{proposition}[Fixed excess and a single large part] \label{th:fixed_excess_and_a_single_large_part}
	If $p$ and $d$ belong to some fixed, finite set which does not depend on $n$, then 
\[
\proba (f) \sim K_f \, . \, n^{\frac{r+3}{2}-p} \, e^{-n},
\]
for some explicitly computable constant $K_f$.
\end{proposition}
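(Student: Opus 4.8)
The plan is to start from the exact expression for $\proba(f)$ given just above the statement, namely
\[
\proba(f) = \frac{m!}{n^{2m}} \sum_{d=-1}^{r+1} p!\,[v^p] C_d(v) \cdot (n-p)!\,[w^{n-p}] C_{r-d}(w),
\]
and to observe that, since the excess $r=m-n$ is fixed, this is a \emph{finite} sum with a bounded number of terms. Hence the whole asymptotic analysis reduces to estimating each summand separately and keeping the largest one. For fixed $d$ the factor $p!\,[v^p]C_d(v)=C_{p+d,p}$ attached to the small block is merely a constant (the number of connected multigraphs on the $p$ fixed vertices with excess $d$), so all the $n$-dependence sits in the large-block factor $(n-p)!\,[w^{n-p}]C_{r-d}(w)=C_{(n-p)+(r-d),\,n-p}$.

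First I would feed each large-block factor into Theorem~\ref{th:connected-multigraphs-fixed-excess}, which gives
\[
(n-p)!\,[w^{n-p}]C_{r-d}(w) \sim K_{r-d}\,(n-p)^{(n-p)+\frac{3(r-d)-1}{2}}.
\]
Next I would reduce everything to powers of $n$ times $e^{-n}$: from $(n-p)\log(1-p/n)\to -p$ one gets $(n-p)^{n-p}=n^{n-p}e^{-p}(1+\smallo(1))$, and Stirling's formula applied to $m!=(n+r)!$ yields
\[
\frac{m!}{n^{2m}} \sim \sqrt{2\pi}\,e^{-n}\,n^{\,1/2-n-r}.
\]
Combining the two estimates, the $d$-th summand behaves like a constant times $e^{-n}\,n^{\,\frac r2 - p - \frac{3d}{2}}$, the $n^{n}$ and $e^{-n}$ contributions cancelling as they must.

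The key observation is then that the exponent $\tfrac r2 - p - \tfrac{3d}{2}$ is strictly decreasing in $d$, so the sum is dominated by its single smallest-index term $d=-1$ (the small block is a tree, and the large block carries the whole excess $r+1$); there is no tie. Extracting that term, with $p!\,[v^p]C_{-1}(v)=p^{p-2}$ by Cayley's formula, gives the announced exponent $\frac{r+3}{2}-p$ together with the explicit constant
\[
K_f = \sqrt{2\pi}\,e^{-p}\,p^{p-2}\,K_{r+1},
\]
where $K_{r+1}$ is as in Theorem~\ref{th:connected-multigraphs-fixed-excess}.

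I do not expect a genuine obstacle here: because the sum is finite, the usual difficulty of controlling an infinite tail does not arise, and the argument is essentially bookkeeping. The only points demanding care are the correct capture of the constant factor $e^{-p}$ coming from the expansion of $(n-p)^{n-p}$, the matching of the Stirling exponents, and the verification of strict monotonicity in $d$ that guarantees a \emph{unique} dominant summand -- in particular that $d=-1$ is always an admissible index, which holds as soon as $r\geq -2$, the standing assumption for a two-block function.
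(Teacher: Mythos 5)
Your proof is correct and follows essentially the same route as the paper: starting from the finite sum $\proba(f)=\frac{m!}{n^{2m}}\sum_{d=-1}^{r+1} p!\,[v^p]C_d(v)\cdot(n-p)!\,[w^{n-p}]C_{r-d}(w)$, applying Theorem~\ref{th:connected-multigraphs-fixed-excess} to the large block, and identifying $d=-1$ as the unique dominant term. In fact you are somewhat more careful than the paper's own sketch, which absorbs the factor $e^{-p}$ from $(n-p)^{n-p}\sim n^{n-p}e^{-p}$ into an unspecified ``suitable constant'' and never writes out $K_f$, whereas your explicit value $K_f=\sqrt{2\pi}\,e^{-p}\,p^{p-2}\,K_{r+1}$ is consistent with the paper's asymptotic statement.
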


\begin{proposition}[Fixed excess $r$ and two large parts] \label{th:fixed_excess_and_two_large_parts}
Assume that $p$ and $n-p$ both tend to infinity, as $n\to\infty$. W.l.o.g. let $p \leq n-p$.
Then we have 
\[
\proba (f) \sim 
\frac{2\pi}{e^n n^{2n+2r}} 
(n-p)^{2n+\frac{3r}{2}} \left( \frac{p}{n-p} \right)^{2p} 
\sum_{d=-1}^{r+1}  K_d \, K_{r-d} \,  
\left( \frac{p}{n-p} \right)^{\frac{3d}{2}}  
\]
for suitable constants $K_j$.
Depending on the actual growth rate of $p$ we can distinguish two cases:
\begin{enumerate}
\item
	If $p = \gamma n$ for some constant $\gamma>0$, then $p/(n-p) = \Theta(1)$ and  
\[
\proba (f) \sim K \, n^{-\frac{r+1}{2}} \, \beta^{2n}  \, e^{-n}
\qquad with \qquad
\beta = (1-\gamma)^{1-\gamma}\,\gamma^{\gamma}.
\]

\item
	If $p=\varepsilon_n \, n $ with $\varepsilon_n=o(1)$, then $p/(n-p)=o(1)$ and 
\[
\proba(f) \sim
K\,  e^{-n} \, n^{\frac{r-1}{2}} \, \varepsilon^{n\varepsilon_n-1} \,
(1-\varepsilon_n)^{(1-\varepsilon_n) n}.
\]
A more precise evaluation of probabilities gives for instance 
\begin{enumerate}
\item 
If $p=\sqrt{n}$, then $\varepsilon_n = n^{-1/2}$ and the probability of the function has order 
$ \frac{n^{-\frac{r}{2} + \frac{3}{4}} \, e^{-n-2\sqrt{n}}}{n^{\sqrt{n}}}.$

\item 
If $p= \log n$, then $\varepsilon_n = \frac{\log n}{n}$ and the probability is of order
$ \left( \frac{\log n}{n} \right)^{\log n -1}  n^{\frac{r+1}{2}} \, e^{-n}$.
\end{enumerate}
\end{enumerate}
\end{proposition}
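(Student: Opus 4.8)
The plan is to start from the exact two-block probability furnished by Theorem~\ref{th:proba-f}: writing $m=n+r$ with $r$ fixed and the two parts of sizes $p$ and $n-p$,
\[
  \proba(f)=\frac{m!}{n^{2m}}\sum_{d=-1}^{r+1}\bigl(p!\,[v^p]\,C_d(v)\bigr)\bigl((n-p)!\,[w^{n-p}]\,C_{r-d}(w)\bigr).
\]
Since $r$ is fixed, each index $d$ in this finite range assigns fixed excesses $d$ and $r-d$ to the two blocks, while the block sizes $p$ and $n-p$ both tend to infinity. I would therefore apply the fixed-excess estimate of Theorem~\ref{th:connected-multigraphs-fixed-excess} to each factor, using $p!\,[v^p]C_d(v)=C_{p+d,p}\sim K_d\,p^{p+(3d-1)/2}$ and $(n-p)!\,[w^{n-p}]C_{r-d}(w)=C_{n-p+r-d,\,n-p}\sim K_{r-d}\,(n-p)^{(n-p)+(3(r-d)-1)/2}$. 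As the number of summands does not grow with $n$, each product may be replaced by its asymptotic equivalent and the results summed term by term; the constants $K_j$ appearing in the statement are exactly those of Theorem~\ref{th:connected-multigraphs-fixed-excess}.

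Next I would expand the prefactor $m!/n^{2m}$ by Stirling's formula, which contributes the factor $e^{-n}$ together with explicit powers of $n$. Multiplying by the estimates above, pulling the common $(n-p)$-power out of the sum, and rewriting each surviving $p$-power as an $(n-p)$-power times a factor $(p/(n-p))^{\bullet}$, concentrates all $d$-dependence into the finite sum $\sum_{d=-1}^{r+1}K_d K_{r-d}\,(p/(n-p))^{3d/2}$ and brings the expression into the displayed closed form. This step is pure bookkeeping of exponents and introduces no idea beyond the two asymptotic inputs.

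The remaining work is the case distinction driven by the growth of $p$. If $p=\gamma n$, the ratio $p/(n-p)$ and hence the whole finite sum are $\Theta(1)$; only the factors $(n-p)^{\bullet}$ and $(p/(n-p))^{\bullet}$ govern the exponential rate, and collecting them produces the base $\beta=(1-\gamma)^{1-\gamma}\gamma^{\gamma}$, while the leftover powers of $n$ give the stated polynomial order. If instead $p=\varepsilon_n n$ with $\varepsilon_n=\smallo(1)$, then $p/(n-p)\to0$ and the finite sum is dominated by its term of smallest exponent, namely $d=-1$ (where $K_{-1}=1$); isolating this term modifies the effective polynomial order and leaves a sub-exponential factor assembled from $\varepsilon_n^{\,n\varepsilon_n-1}$ and $(1-\varepsilon_n)^{(1-\varepsilon_n)n}$. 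The two explicit subcases follow by substituting $\varepsilon_n=n^{-1/2}$ and $\varepsilon_n=(\log n)/n$ and expanding $\log(1-\varepsilon_n)$ to sufficiently high order.

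I expect the genuine difficulty to reside entirely in this degenerate regime $\varepsilon_n\to0$. One must correctly identify the dominant summand as $p/(n-p)\to0$ and then push the logarithmic expansion of $(1-\varepsilon_n)^{(1-\varepsilon_n)n}$ far enough to recover the correct sub-exponential corrections, since here a careless ``replace each factor by its leading term'' can drop contributions of the same order. By contrast, the proportional regime $p=\gamma n$ is routine once the closed form has been established.
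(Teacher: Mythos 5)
Your proposal follows the paper's own proof essentially step for step: the same exact two-block formula derived from Theorem~\ref{th:proba-f}, the same application of the fixed-excess asymptotics of Theorem~\ref{th:connected-multigraphs-fixed-excess} to both coefficients (valid term by term because the sum over $d$ has a bounded number of terms), Stirling's formula for $m!/n^{2m}$, and the same case analysis in which the sum $\sum_{d} K_d K_{r-d}\,(p/(n-p))^{3d/2}$ is $\Theta(1)$ when $p=\gamma n$ and is dominated by the $d=-1$ term when $p/(n-p)=o(1)$. The approach is correct and matches the paper's proof.
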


\begin{proposition}[Large excess $r$] \label{th:two_parts_large_excess}
Assume that $r = c n$ for a fixed positive value~$c$.
Again, we distinguish two cases:
\begin{enumerate}
\item {\bf Single large part.}
If~$p$ is constant, then 
\[
\proba (f) \sim \frac{K_f}{n^{p-1}}
\left(
\frac{ (1+c)^c \cosh(\zeta) }{ e^{1+c} (2 \zeta)^c }
\right)^n, 
\]
for some explicitly computable constant $K_f$, 
where~$\zeta \coth \zeta = 1 + \frac{cn+1}{n-p}$.

\item {\bf Two proportional large parts.}
If~$p = \gamma n$ and~$r = c n$, then 
\[
\proba (f) \sim
  \frac{K_f}{n}
  \left(
    \frac{ \gamma^\gamma (1-\gamma)^{1-\gamma} (1+c)^{1+c} }{ 2^c e^{1+c }}
    g(a_0)  
  \right)^n
\]
where~$K_f$ is a computable constant,
and~$g(a_0)$ is the unique maximum  of the function in~$[0,1]$.
\begin{align*}
    g(a) 
  &=
    \left( \frac{ \cosh(\zeta_{\frac{a c}{\gamma}}(a)) }{ 1 + \frac{a c}{\gamma} } \right)^\gamma
    \left( \frac{ \cosh(\zeta_{\frac{(1-a) c}{1-\gamma}}(a)) }{ 1 + \frac{(1-a) c}{1-\gamma} } \right)^{1-\gamma}
    \left( \frac{\gamma}{\zeta_{\frac{a c}{\gamma}}(a)} \right)^{a c}
    \left( \frac{1-\gamma}{\zeta_{\frac{(1-a) c}{1-\gamma}}(a)} \right)^{(1-a) c}
\end{align*}
where~$
    \zeta_{\frac{a c}{\gamma}}(a) \coth \zeta_{\frac{a c}{\gamma}}(a) 
  = 
    1 + \frac{a c}{\gamma}
$ and~$
    \zeta_{\frac{(1-a) c}{1-\gamma}}(a) \coth \zeta_{\frac{(1-a) c}{1-\gamma}}(a) 
  = 
    1 + \frac{(1-a) c}{1-\gamma}$.
\end{enumerate}
\end{proposition}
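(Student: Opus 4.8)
The plan is to start from the exact expression for $\proba(f)$ furnished by Theorem~\ref{th:proba-f}(2), specialized to the two blocks $B_1,B_2$ of sizes $p$ and $n-p$,
\[
\proba(f) = \frac{m!}{n^{2m}}\sum_{d=-1}^{r+1} p![v^p]C_d(v)\cdot(n-p)![w^{n-p}]C_{r-d}(w),
\]
where $d$ is the excess allotted to the first block and $r-d$ that of the second, and to insert the large-excess asymptotics of Theorem~\ref{th:multigraphs-large-excess} for whichever block carries an excess tending to infinity. Throughout I would write $m=(1+c)n$ and apply Stirling's formula to $m!$; the super-exponential factors $n^{\Theta(n)}$ coming from $m!/n^{2m}$, from $p^{m_1}(n-p)^{m_2}$, and from the two block counts cancel up to a fixed power of $n$ (their exponents contribute $n^{m_1+m_2-m}$), leaving a clean exponential-times-polynomial shape.

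For the single large part ($p$ fixed) the key observation is that the two factors pull in opposite directions as $d$ varies: the small-block count $p![v^p]C_d(v)=C_{p+d,p}$ grows only polynomially in $d$, whereas differentiating the large-block asymptotics shows, using the defining relation $\zeta\coth\zeta=m'/n'$ which makes the bracketed derivative vanish, that $C_{N+s+1,N}/C_{N+s,N}\sim N/(2\zeta)\to\infty$. Hence the product is dominated by the \emph{smallest} admissible value $d=-1$: the small block is a tree ($C_{p-1,p}=p^{p-2}$) and the large block carries excess $r+1=cn+1$. Feeding $d=-1$ into Theorem~\ref{th:multigraphs-large-excess} with $\zeta$ determined by $\zeta\coth\zeta=1+\frac{cn+1}{n-p}$, and tracking the powers of $n$, produces the residual factor $n^{1-p}$ and an exponential base $\frac{(1+c)^{1+c}\,2\sinh\zeta}{e^{1+c}(2\zeta)^{1+c}}$; rewriting $2\sinh\zeta$ through $\zeta\coth\zeta=1+c$ turns this into the stated $\cosh$-form $\bigl(\frac{(1+c)^c\cosh\zeta}{e^{1+c}(2\zeta)^c}\bigr)^n$, all remaining constants ($p^{p-2}$, $\sqrt{1+c}$, the Theorem~\ref{th:multigraphs-large-excess} prefactor, and so on) being collected into $K_f$.

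For two proportional parts ($p=\gamma n$, $r=cn$) both blocks have excess of order $n$, so I would apply Theorem~\ref{th:multigraphs-large-excess} to each, parametrizing the split by $a\in[0,1]$ via $d=acn$, so that the two auxiliary quantities satisfy $\zeta_1\coth\zeta_1=1+\frac{ac}{\gamma}$ and $\zeta_2\coth\zeta_2=1+\frac{(1-a)c}{1-\gamma}$. Collecting the exponential contributions of both blocks and of $m!/n^{2m}$ gives a summand whose exponential part is $\bigl(\frac{\gamma^\gamma(1-\gamma)^{1-\gamma}(1+c)^{1+c}}{2^c e^{1+c}}\,g(a)\bigr)^n$, where the two $\sinh$'s are again converted into the $\cosh$'s of $g(a)$ using the defining relations for $\zeta_1,\zeta_2$ (each conversion factor collapses to $1$ because $\tanh\zeta\coth\zeta=1$). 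The sum over $d$ is then evaluated by the Laplace/saddle-point method: it concentrates at the unique interior maximizer $a_0$ of $g$, and combining the Gaussian width of this summation with the $(2\pi\cdot\text{vertices})^{-1/2}$ prefactors of the two block asymptotics and the $\sqrt{2\pi m}$ of Stirling yields the polynomial factor and the constant $K_f$.

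I would expect the main obstacle to be the two-part saddle-point analysis. One must verify that $g$ is genuinely maximized at a single interior point $a_0$ with $g''(a_0)<0$, so that the saddle is non-degenerate and the Gaussian approximation is legitimate; and—since Theorem~\ref{th:multigraphs-large-excess} requires the excess of each block to tend to infinity—one must separately control the contributions near the endpoints $a=0$ and $a=1$, where one block reverts to a fixed or only slowly growing excess and the large-excess estimate no longer applies, showing that these boundary terms are exponentially negligible against the contribution of $a_0$.
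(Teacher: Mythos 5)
For the two-proportional-parts case your proposal coincides with the paper's own proof: the same starting expression from Theorem~\ref{th:proba-f}, the same substitution of the large-excess asymptotics of Theorem~\ref{th:multigraphs-large-excess} (rewritten in $\cosh$ form) into both factors under the parametrization $d=ar$, the same summand with exponential part $\bigl(\frac{\gamma^\gamma(1-\gamma)^{1-\gamma}(1+c)^{1+c}}{2^c e^{1+c}}\,g(a)\bigr)^n$, and the same Laplace method for sums. The two obstacles you flag at the end are precisely what the paper settles: it computes $\frac{d}{da}\log g(a)=c\log\bigl(\frac{\gamma}{1-\gamma}\,\zeta_{\frac{(1-a)c}{1-\gamma}}/\zeta_{\frac{ac}{\gamma}}\bigr)$, shows this is strictly decreasing on $]0,1[$ (using $\zeta(x)>\sqrt{x(1+x)}$) and runs from $+\infty$ at $a=0$ to $-\infty$ at $a=1$, which simultaneously gives the unique nondegenerate interior maximum $a_0$ and confines the dominant terms to a compact subinterval of $]0,1[$, where the error term of the asymptotic formula is uniform.

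Where you genuinely add something: the paper's written proof treats \emph{only} that case; the single-large-part case ($p$ fixed, large excess) is stated in the proposition but never argued in Section~\ref{sec:two_blocks_large_excess}. Your argument for it is sound and fills that gap: each unit of excess moved to the large block multiplies its count by $\Theta(n)$ (the saddle-point property of Theorem~\ref{th:multigraphs-large-excess} making the bracketed derivative vanish), so the sum concentrates on $d=-1$, i.e.\ a tree of size $p$ (count $p^{p-2}$) together with a connected component of excess $r+1$; the identity $\sinh\zeta/\zeta=\cosh\zeta/(\zeta\coth\zeta)$ with $\zeta\coth\zeta=1+\frac{cn+1}{n-p}$ then converts the base $\frac{(1+c)^{1+c}\,2\sinh\zeta}{e^{1+c}(2\zeta)^{1+c}}$ into the stated one, the discrepancy between $\bigl(1+\frac{cn+1}{n-p}\bigr)^n$ and $(1+c)^n$ being a bounded factor absorbed into $K_f$, and the bookkeeping of powers of $n$ indeed leaves $n^{1-p}$. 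One inaccuracy to repair: for fixed $p$ the small-block count $C_{p+d,p}$ does not grow ``polynomially in $d$''; by Theorem~\ref{connected_excess-infinite} it is of order $p^{2(p+d)}/\bigl(2^{p+d}(p+d)!\bigr)$, so its per-step ratio is bounded by $p^2/2$ and the terms eventually decay factorially. What your comparison actually needs — and what is true — is that this per-step ratio is $\bigO(1)$ uniformly in $d$, against the large block's $\Theta(n)$ (which is even $\Theta(n^{3/2})$ in the bounded-excess regime), so the geometric domination by $d=-1$ and your conclusion stand.
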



Decomposing the two connected multigraphs according to excess gives the generating function for multigraphs with 2 connected components of respective number of vertices $p$ and $n-p$:
\begin{eqnarray*}
\phi_\jj (z)&=&
p! [v^p] \sum_{r \geq -1} (4z)^r C_r(4zv) \cdot (n-p)! [ w^{n-p} ] \sum_{s \geq -1} (4z)^s C_s(4zw)
\\ &=&
p! (n-p)! [ v^p w^{n-p} ]
\sum_{r,s \geq -1} (4z)^{r+s}\, C_r(4zv) \,  C_s(4zw)
\end{eqnarray*}
and
\begin{eqnarray*}
[z^m]\phi_\jj (z)&=&
p! (n-p)! [ z^m v^p w^{n-p} ]
\sum_{r,s \geq -1} (4z)^{r+s}\, C_r(4zv) \,  C_s(4zw)
\\ &=&
4^m \, 
p! (n-p)! [ v^p w^{n-p} ]
\sum_{r,s \geq -1, r+s+n=m}  C_r(v) \,  C_s(w)
\\ &=&
4^m \, 
p! (n-p)! [ v^p w^{n-p} ]
\sum_{r=-1}^{m-n+1}  C_r(v) \,  C_{m-n-r}(w)\\ &=&
4^m \, \sum_{r=-1}^{m-n+1} 
p! [ v^p ]  C_r(v) \cdot
(n-p)! [ w^{n-p} ] C_{m-n-r}(w).
\end{eqnarray*}
Then
\begin{eqnarray*}
\proba(f) &=& \frac{m!}{4^m n^{2m}} [z^m]\phi_\jj (z)
\\&=&
\frac{m!}{n^{2m}} \; \sum_{d=-1}^{r+1} p! [v^p]  C_d(v) \cdot (n-p)! [ w^{n-p} ] C_{r-d}(w).
\end{eqnarray*}

    \subsubsection{Function with two blocks and fixed excess} \label{sec:two_blocks_fixed_excess}

    \paragraph{Single large part}

We now present the proof of Proposition~\ref{th:fixed_excess_and_a_single_large_part}.
In the range we are working in, $p$ and $d$ belong to a fixed, finite set; let us define
\[
\gamma_{d,p} = p! [ v^p ] C_d(v).
\]
Then 
\[
\proba(f) =
\frac{m!}{n^{2m}} \; \sum_{d=-1}^{r+1} \gamma_{d,p} (n-p)! [ w^{n-p} ] C_{r-d}(w)
\]
and the asymptotic value of the coefficient $(n-p)! [ w^{n-p} ] C_{r-d}(w)$ is given by  Equation~(\ref{eq:asympt-bernhard-exces}), with a suitable constant:
\begin{eqnarray*}
(n-p)! [ w^{n-p} ] C_{r-d}(w) &\sim&
K_{r-d} .\, n^{n-p+\frac{3(r-d)-1}{2}} .
\end{eqnarray*}
We see that the dominant term of the sum $\sum_{d=-1}^{r+1} \gamma_{d,p} \left[ w^{n-p} \right] C_{r-d}(w)$ will be obtained for $d=-1$, which gives, for some suitable constant $K_f$ that can be explicitly computed
\[
\proba(f) \sim K_f \, . \, n^{\frac{r+3}{2}-p} \, e^{-n} .
\]

    \paragraph{Two large parts}

This paragraph contains the proof of Proposition~\ref{th:fixed_excess_and_two_large_parts}.
By symmetry, we can assume that $p \leq n-p$.
Recall that
\begin{eqnarray*}
\proba (f) =
\frac{m!}{n^{2m}} \; \sum_{d=-1}^{r+1} p! [v^p]  C_d(v) \cdot (n-p)! [ w^{n-p} ] C_{r-d}(w),
\end{eqnarray*}
but now the coefficients $\left[ v^p \right]  C_d(v)$ and $ \left[ w^{n-p} \right] C_{r-d}$ can both be obtained from the expansion~(\ref{eq:asympt-bernhard-exces}) ($p$ \emph{and} $n-p$ are large); moreover we are dealing with a fixed number of terms:
\begin{eqnarray*}
\proba(f) &\sim &
\frac{m!}{n^{2m}} \; \sum_{d=-1}^{r+1} K_d \, p^{p+\frac{3d-1}{2}} K_{r-d} \, (n-p)^{n-p+\frac{3(r-d)-1}{2}}
\\ &\sim &
\frac{m!}{n^{2m}} \;  p^{p-\frac{1}{2}} (n-p)^{n-p+\frac{3r-1}{2}}   
\sum_{d=-1}^{r+1}  K_d \, K_{r-d} \,  
\left( \frac{p}{n-p} \right)^{\frac{3d}{2}} 
\\ &\sim &
\sqrt{\frac{2\pi \,n}{p(n-p)}} \,\frac{e^{-n}}{n^{n+r}} \, (n-p)^{n+\frac{3r}{2}} \, \left( \frac{p}{n-p} \right)^{p} 
\sum_{d=-1}^{r+1}  K_d \, K_{r-d} \,  
\left( \frac{p}{n-p} \right)^{\frac{3d}{2}}  .
\end{eqnarray*}
Now we have to find the behaviour of the sum in the above expression, and we see that there are two different cases:
\begin{enumerate}
\item
If $p$ and $n$ are proportional, then $p/(n-p) = \Theta(1)$ (for simplification we set $p = \gamma n$ and assume $\gamma$ is constant, but the sequel only requires that $\gamma = \Theta(1)$);
all terms  $\left( \frac{p}{n-p} \right)^{\frac{3d}{2}}$ contribute to a constant factor, and the sum itself is constant, hence for a suitable constant \footnote{Here and in what follows, the constant denoted by $K$ may vary and may depend on~$r$ -- but it is always possible to get an explicit, though cumbersome, expression for it.} $K$ we have 
\[
\proba(f) \sim K \, n^{\frac{r-1}{2}} \, \beta^{n}  \, e^{-n}
\qquad with \qquad
\beta = (1-\gamma)^{1-\gamma}\,\gamma^{\gamma}.
\]

\item
If $p/(n-p)=o(1)$ i.e. $p=o(n)$, then the first term of the sum dominates: Up to a constant
multiplicative factor, the whole sum is asymptotically equivalent to $\left( \frac{n-p}{p}
\right)^{\frac{3}{2}}$. Setting $\varepsilon = p/n$ we get
\begin{eqnarray*}
\proba(f) &\sim &
K\,  e^{-n} \, n^{\frac{r-1}{2}} \,\varepsilon^{n\varepsilon-1} \, (1-\varepsilon)^{(1-\varepsilon) n}  .
\end{eqnarray*}

\end{enumerate}

    \subsubsection{Large excess} \label{sec:two_blocks_large_excess}

This section contains the proof of Proposition~\ref{th:two_parts_large_excess}.
Let~$C_{n+r,n}$ denote the number of connected multigraphs
with~$n$ vertices and excess~$r$.
For this proof, we rewrite the asymptotics of $C_{n+r,n}$
when~$r \rightarrow \infty$ and~$(r+n) e^{-2 r/n} \rightarrow \infty$,
already derived in Theorem~\ref{th:multigraphs-large-excess}, as
\begin{equation} \label{eq:cmg}
  C_{n+r,n} 
  =
  \frac{ \alpha(\zeta_{\frac{r}{n}}) }{ \sqrt{2\pi} (2 \zeta_{\frac{r}{n}})^r } 
  \left( 
    \frac{\cosh \zeta_{\frac{r}{n}}}{1 + \frac{r}{n}} 
  \right)^n 
  n^{n+r-\frac{1}{2}}
  \left(1 + \bigO \left( r e^{-2r/n} \right)^{-\frac{1}{2}+\epsilon} \right)
\end{equation}
for any small~$\epsilon > 0$,
where~$\zeta_{\frac{r}{n}} \coth \zeta_{\frac{r}{n}} = 1+\frac{r}{n}$
and~$\alpha(\zeta) = \frac{e^{2\zeta}-1-2\zeta}{\sqrt{\zeta (e^{4\zeta}-1-4\zeta e^{2\zeta})}}$.

We are interested here in the probability that
a random~$2$-Xor expression
with~$n$ variables and~$m$ clauses
compute the Boolean function
with two blocks of sizes~$\gamma n$ and~$(1-\gamma) n$
\[
  x_1 \sim \ldots \sim x_{\gamma n},\ x_{\gamma n + 1} \sim \ldots \sim x_n.
\]
This probability can be expressed as
\begin{align*}
    \Ptwoblocks
  &=
    \frac{m!}{n^{2m}} \sum_{d=-1}^{r+1} C_{\gamma n + d, \gamma n} C_{(1-\gamma)n + r-d, (1-\gamma)n}
  \\
  &=
    \frac{m!}{n^{2m}} \sum_{d=-1}^{r+1} A_d
\end{align*}
where~$r = m-n$ is the global excess of the multigraphs
representing the random expression
and $d$ (resp.~$r-d$) the excess of its first (resp. second) connected component.


The main ingredient for the proof of Proposition~\ref{th:two_parts_large_excess}
is the Laplace methode. 
It involves first a reduction to a problem of real analysis, 
then the analysis of a real function.
Those steps are detailed in the next two paragraphs.

    \paragraph{Reduction to a real analysis problem}

We make the assumption that the excess~$r$ increases proportionately to~$n$,
so~$r = (\alpha-1) n$ where~$\alpha=\frac{m}{n} > 0$ is a constant.
In that case,
\begin{align*}
    \frac{m!}{n^{2m}}
  &=
    \frac{(n+r)!}{n^{2(n+r)}}
  \\
  &\sim
    \frac{(n+r)^{n+r} \sqrt{2 \pi (n+r)}}{n^{2(n+r)} e^{n+r}}
  \\
  &\sim
    \frac{\left( 1+\frac{r}{n} \right)^{n+r+1/2} \sqrt{2 \pi n}}{n^{n+r} e^{n+r}}
  \\
  &\sim
    \sqrt{2\pi} \, \frac{\alpha^{\alpha n + 1/2}}{e^{\alpha n}} n^{-\alpha n + 1/2}
\end{align*}

Let us summarize some notations\\
\begin{center}
\begin{tabular}{ll}
  \text{total number of vertices} & $n$ \\
  \text{size of the first and smallest block} & $p = \gamma n$ \\
  \text{size of the second block} & $n-p = (1-\gamma) n$ \\
  \text{total excess} & $r = c n$ \\
  \text{excess of the first block} & $d = a r$ \\
  \text{excess of the second block} & $r-d = (1-a) r$
\end{tabular}
\end{center}

The expression of~$A_d$ is quite complicated,
so, in order to avoid forgetting some terms in the product,
we write them down in the following array
\[
\begin{array}{cc}
C_{\gamma n + a r, \gamma n} & 
C_{(1-\gamma)n + (1-a) r, (1-\gamma)n}
\\ \hline
\gamma n & 
(1-\gamma)n 
\\
a r & 
(1-a) r 
\\ \hline
\alpha(\zeta_{\frac{a c}{\gamma}}) & 
\alpha(\zeta_{\frac{(1-a) c}{1-\gamma}})
\\
\cosh(\zeta_{\frac{a c}{\gamma}})^{\gamma n} & 
\cosh(\zeta_{\frac{(1-a) c}{1-\gamma}})^{(1-\gamma) n}                                        
\\
\scriptstyle \gamma^{(\gamma+a c) n - 1/2} n^{(\gamma+a c) n -
1/2} & 
\scriptstyle (1-\gamma)^{(1-\gamma+(1-a) c) n - 1/2} n^{(1-\gamma+(1-a) c) n - 1/2} 
\\
2^{a c n} \zeta_{\frac{a c}{\gamma}}^{a c n} & 
2^{(1-a) c n} \zeta_{\frac{(1-a) c}{1-\gamma}}^{(1-a) c n}
\\
\left(1+\frac{a c}{\gamma}\right)^{\gamma n} & \left(1+\frac{(1-a) c}{1-\gamma}\right)^{(1-\gamma) n}
\end{array}.
\]

\bigskip
Now write $A_d = C_{\gamma n + a r, \gamma n} C_{(1-\gamma)n + (1-a) r, (1-\gamma)n}$ as
\begin{eqnarray*}
    A_d &\sim&
    \frac{\left( \gamma^{\gamma} (1-\gamma)^{1-\gamma} \right)^n n^{(c+1) n - 1}}{2\pi \sqrt{\gamma(1-\gamma)} 2^{c n}}
    \alpha(\zeta_{\frac{a c}{\gamma}}) \alpha(\zeta_{\frac{(1-a) c}{1-\gamma}})
   \\ && \qquad      \times 
    \left(
      \frac{ \cosh(\zeta_{\frac{a c}{\gamma}})^{\gamma} \cosh(\zeta_{\frac{(1-a) c}{1-\gamma}})^{1-\gamma} }{ \left( 1 + \frac{ a c}{\gamma} \right)^\gamma   \left( 1 + \frac{(1-a) c}{1-\gamma} \right)^{1-\gamma} }
      \left( \frac{\gamma}{\zeta_{\frac{a c}{\gamma}}} \right)^{a c}
      \left( \frac{1-\gamma}{\zeta_{\frac{(1-a) c}{1-\gamma}}} \right)^{(1-a)c}
    \right)^n
\end{eqnarray*}
which gives
\begin{align*}
    \frac{m!}{n^{2m}} A_d
  &\sim
    \frac{\left( \gamma^{\gamma} (1-\gamma)^{1-\gamma} \right)^n 
    (c+1)^{(c+1) n + \frac{1}{2}}}{\sqrt{2 \pi n} \sqrt{ \gamma (1-\gamma)} 2^{c n} e^{(c+1) n}}
    \alpha(\zeta_{\frac{a c}{\gamma}}) \alpha(\zeta_{\frac{(1-a) c}{1-\gamma}})
    g(a)^n  
  \\
  &\sim
    \sqrt{\frac{c+1}{\gamma(1-\gamma)}}
    \left(
      \frac{ \gamma^\gamma (1-\gamma)^{1-\gamma} (c+1)^{(c+1)} }{ 2^{c} e^{c+1}}
    \right)^n
    \frac{\alpha(\zeta_{\frac{a c}{\gamma}}) \alpha(\zeta_{\frac{(1-a) c}{1-\gamma}})}{\sqrt{2 \pi n}}
    g(a)^n
\end{align*}
where
\begin{align*}
    g(a) 
  &=
    \left( \frac{ \cosh(\zeta_{\frac{a c}{\gamma}}) }{ 1 + \frac{a c}{\gamma} } \right)^\gamma
    \left( \frac{ \cosh(\zeta_{\frac{(1-a) c}{1-\gamma}}) }{ 1 + \frac{(1-a) c}{1-\gamma} } \right)^{1-\gamma}
    \left( \frac{\gamma}{\zeta_{\frac{a c}{\gamma}}} \right)^{a c}
    \left( \frac{1-\gamma}{\zeta_{\frac{(1-a) c}{1-\gamma}}} \right)^{(1-a) c},
  \\
    \zeta_{\frac{a c}{\gamma}} \coth \zeta_{\frac{a c}{\gamma}} 
  &= 
    1 + \frac{a (\alpha-1)}{\gamma}, \qquad \alpha(\zeta_{\frac{a c}{\gamma}}) =
    \frac{e^{2\zeta_{\frac{a c}{\gamma}}}-1-2\zeta_{\frac{a c}{\gamma}}}{\sqrt{\zeta_{\frac{a c}{\gamma}} (e^{4\zeta_{\frac{a c}{\gamma}}}-1-4\zeta_{\frac{a c}{\gamma}} e^{2\zeta_{\frac{a c}{\gamma}}})}},
  \\
    \zeta_{\frac{(1-a) c}{1-\gamma}} \coth \zeta_{\frac{(1-a) c}{1-\gamma}} 
  &= 
    1 + \frac{(1-a) c}{1-\gamma}, \qquad \alpha(\zeta_{\frac{(1-a) c}{1-\gamma}}) =
    \frac{e^{2\zeta_{\frac{(1-a) c}{1-\gamma}}}-1-2\zeta_{\frac{(1-a) c}{1-\gamma}}}{\sqrt{\zeta_{\frac{(1-a) c}{1-\gamma}} (e^{4\zeta_{\frac{(1-a) c}{1-\gamma}}}-1-4\zeta_{\frac{(1-a) c}{1-\gamma}} e^{2\zeta_{\frac{(1-a) c}{1-\gamma}}})}}.
\end{align*}

We will see in the next paragraph that 
the dominant part of the sum~$\sum_{d=-1}^{r+1} A_d$
is reached for a compact range of~$a$ included in~$]0,1[$.
This justifies the use of the asymptotic formula~\ref{eq:cmg}.
Furthermore, the error term of~$A_d$
\[
  \left( 1 + \bigO \left( a r e^{- a c/\gamma} \right)^{-\frac{1}{2}+\epsilon} \right)
  \left( 1 + \bigO \left( (1-a) r e^{-(1-a) c / (1-\gamma)} \right)^{-\frac{1}{2}+\epsilon} \right)
\]
becomes uniform in~$a$, so
\[
    \Ptwoblocks
  \sim 
    \sqrt{\frac{c+1}{\gamma(1-\gamma)}}
    \left(
      \frac	{ \gamma^\gamma (1-\gamma)^{1-\gamma} (c+1)^{c+1} }
      		{ 2^{c} e^{c+1}}
    \right)^n
    \frac{1}{\sqrt{2 \pi n}}
    \sum_{d=0}^r
      \alpha(\zeta_{\frac{a c}{\gamma}}) \alpha(\zeta_{\frac{(1-a) c}{1-\gamma}})
      g \left( \frac{d}{r} \right)^n.
\]

    \paragraph{Analysis of~$g(a)$}

We prove here that~$g(a)$ has a unique maximum~$a_0$ in~$[0,1]$
such that~$0 < a_0 < 1$.
To do so, we use the concavity of~$\log(g(a))$.
The \emph{Laplace's method for sums}
described in~\cite{FlajoletSedgewick} p.761
then leads to
\begin{align*}
    \sum_{d=0}^r
      \alpha(\zeta_{\frac{a c}{\gamma}}) \alpha(\zeta_{\frac{(1-a) c}{1-\gamma}})
      g \left( \frac{d}{r} \right)^n
  &\sim
    \sqrt{\frac{2 \pi}{\lambda n}} 
    \alpha(\zeta_{\frac{a_0 c}{\gamma}}) \alpha(\zeta_{\frac{(1-a_0) c}{1-\gamma}}) 
    g(a_0)^n
\end{align*}
where~$\lambda = - \frac{g''(a_0)}{g(a_0)}$, so
\begin{align*}
  \Ptwoblocks
&\sim
  \sqrt{\frac{c+1}{\gamma(1-\gamma) \lambda}}
  \left(
    \frac{ \gamma^\gamma (1-\gamma)^{1-\gamma} (c+1)^{c+1} }{ 2^c e^{c+1}}
  \right)^n
  \frac{\alpha(\zeta_{\frac{a c}{\gamma}})(a_0) \alpha(\zeta_{\frac{(1-a) c}{1-\gamma}})(a_0)}{n}
  g(a_0)^n  .
\end{align*}


The proof of the asymptotics is now reduced
to a real analysis problem: Proving that
\begin{align*}
    g(a) 
  &=
    \left( \frac{ \cosh(\zeta_{\frac{a c}{\gamma}}) }{ 1 + \frac{a c}{\gamma} } \right)^\gamma
    \left( \frac{ \cosh(\zeta_{\frac{(1-a) c}{1-\gamma}}) }{ 1 + \frac{(1-a) c}{1-\gamma} } \right)^{1-\gamma}
    \left( \frac{\gamma}{\zeta_{\frac{a c}{\gamma}}} \right)^{a c}
    \left( \frac{1-\gamma}{\zeta_{\frac{(1-a) c}{1-\gamma}}} \right)^{(1-a) c}
\\
&=
  \left( 
    \frac{ \cosh \zeta_{\frac{a c}{\gamma}} }{ \zeta_{\frac{a c}{\gamma}}^{x_1} }
    \frac{ \gamma^{x_1} }{ 1+x_1 }
  \right)^\gamma
  \left(
    \frac{ \cosh \zeta_{\frac{(1-a) c}{1-\gamma}} }{ \zeta_{\frac{(1-a) c}{1-\gamma}}^{x_2} }
    \frac{ (1-\gamma)^{x_2} }{ 1+x_2 }
  \right)^{1-\gamma},
\end{align*}
where~$x_1 = \frac{ a c }{ \gamma }$ and~$x_2 = \frac{ (1-a) c }{ 1-\gamma }$,
has a unique maximum in the interior of~$]0,1[$ 
for all~$c > 0$ and~$\gamma \in ]0, 1/2]$.
Let~$\zeta(x)$ be defined implicitly as
\[
  \zeta \coth \zeta = 1 + x,
\]
then
\begin{align*}
  \frac{\zeta'}{\zeta} &= \frac{1}{\zeta^2 - x (1+x)}, \\
  \zeta' \tanh \zeta &= \frac{\zeta^2}{(\zeta^2 - x (1+x))(1+x)},
\end{align*}
so
\begin{align*}
    \frac{d}{d x} \log 
    \left( 
      \frac{ \cosh \zeta}{ \zeta^{x} }
      \frac{ \gamma^{x} }{ 1+x }
    \right)
  &=
    \zeta' \tanh(\zeta) - x \frac{\zeta'}{\zeta} - \frac{1}{1+x} + \log(\gamma) - \log(\zeta)
  \\
  &=
    \frac{\zeta^2}{(\zeta^2 - x (1+x))(1+x)} - \frac{x}{\zeta^2 - x (1+x)} - \frac{1}{1+x} + \log \left( \frac{\gamma}{\zeta} \right)
  \\
  &=
    \frac{\zeta^2 - x(1+x)}{(\zeta^2 - x (1+x))(1+x)} - \frac{1}{1+x} + \log \left( \frac{\gamma}{\zeta} \right)
  \\
  &=
    \log \left( \frac{\gamma}{\zeta} \right)
\end{align*}
and
\begin{align*}
    \frac{d}{d x} \log 
    \left( 
      \frac{ \cosh \zeta}{ \zeta^{x} }
      \frac{ (1-\gamma)^{x} }{ 1+x }
    \right)
  &=
    \log \left( \frac{1-\gamma}{\zeta} \right).
\end{align*}
Therefore,
\begin{align*}
  \frac{d}{d a} \log(g(a))
&=
  \gamma  
  \left( \frac{d}{d a} x_1 \right)
  \frac{d}{d x_1} \log 
    \left( 
      \frac{ \cosh \zeta_{\frac{a c}{\gamma}}}{ \zeta_{\frac{a c}{\gamma}}^{x_1} }
      \frac{ \gamma^{x_1} }{ 1+x_1 }
    \right)\\
& \hspace{1.5cm}
  +  
  (1-\gamma)
  \left( \frac{d}{d a} x_2 \right)
  \frac{d}{d x_2} \log 
    \left( 
      \frac{ \cosh \zeta_{\frac{(1-a) c}{1-\gamma}}}{ \zeta_{\frac{(1-a) c}{1-\gamma}}^{x_2} }
      \frac{ (1-\gamma)^{x_2} }{ 1+x_2 }
    \right)
\\
&=
  c \log \left( \frac{\gamma}{\zeta_{\frac{a c}{\gamma}}} \right) - c \log \left( \frac{1-\gamma}{\zeta_{\frac{(1-a) c}{1-\gamma}}} \right)
\\
&=
  c \log \left( \frac{\gamma}{1-\gamma} \frac{\zeta_{\frac{(1-a) c}{1-\gamma}}}{\zeta_{\frac{a c}{\gamma}}} \right)
\end{align*}
and
\begin{align*}
  \frac{1}{c} \frac{d^2}{(d a)^2} \log(g(a))
&=
  \frac{d}{d a} \log \left( \frac{\zeta_{\frac{(1-a) c}{1-\gamma}}}{\zeta_{\frac{a c}{\gamma}}} \right)
\\
&=
  \left( \frac{d}{d a} x_2 \right)
  \frac{ \zeta_{\frac{(1-a) c}{1-\gamma}}' }{ \zeta_{\frac{(1-a) c}{1-\gamma}} }
  -
  \left( \frac{d}{d a} x_1 \right)
  \frac{ \zeta_{\frac{a c}{\gamma}}' }{ \zeta_{\frac{a c}{\gamma}} }
\\
&=
  - \frac{c}{1-\gamma} 
  \frac{1}{ \zeta_{\frac{(1-a) c}{1-\gamma}}^2 - x_2 (1+x_2) }
  -
  \frac{c}{\gamma}
  \frac{1}{ \zeta_{\frac{a c}{\gamma}}^2 - x_1 (1+x_1) }
\end{align*}
which is negative because for all~$x>0$,
\[
  \zeta(x) > \sqrt{x(1+x)}.
\]
Therefore, $\frac{d}{d a} \log(g(a))$ is decreasing on~$]0,1[$.
Let us summarize some values:
\[
\begin{array}{c|ccc}
a & 0 & \gamma & 1\\ \hline
x_1 = \frac{a c}{\gamma} & 0 & c & \frac{c}{\gamma} \\
x_2 = \frac{(1-a) c}{1-\gamma} & \frac{c}{1-\gamma} & c & 0\\
\zeta_{\frac{a c}{\gamma}} & 0 & \zeta(c) & \zeta \left(\frac{c}{\gamma} \right)\\
\zeta_{\frac{(1-a) c}{1-\gamma}} & \zeta \left( \frac{c}{1-\gamma} \right) & \zeta(c) & 0\\
\frac{\zeta_{\frac{(1-a) c}{1-\gamma}}}{\zeta_{\frac{a c}{\gamma}}} & +\infty & & -\infty\\
\frac{d}{d a} \log(g(a)) & + \infty & & -\infty
\end{array}
\]
so $\frac{d}{d a} \log(g(a))$ has a zero on~$]0,1[$,
and~$g(a)$ has a unique maximum in~$]0,1[$.

    \subsection{Number of blocks proportional to $n$}

A general approach via Theorem~\ref{th:proba-f} seems difficult, so we assume a certain regularity: 
Let $f$ denote a Boolean function such that the associated integer partition is of the form  
$\mathbf{i}(f)=(0,\dots,0,n/g,0,\dots)$, with $g\ge 2$. 
Note that the corresponding multigraph has to have at least $m=(g-1)\cdot\frac{n}{g}$ edges. Thus, in contrary to the previously discussed cases, the excess $r = - \frac{n}{g}$ is no more bounded from below as $n\to\infty$. Such functions may now appear even close to the threshold $1/2$. 
In Proposition~\ref{th:exact_proportional}, we derive an exact result for those functions; an asymptotic result is stated in Proposition~\ref{th:asympt_proportional}.

\begin{proposition} \label{th:exact_proportional}
The number of expressions $\N(f)$ with $n$ variables and $m$ clauses computing a function $f$ with
associated integer partition representation of the form $\mathbf{i}(f)=(0,\dots,0,n/g,0,\dots)$,
i.e. $n/g$ blocks of size $g$, is
given by
\begin{equation} \label{explicit_form}
\N(f)= m!4^m (g!)^{\frac{n}{g}}[z^m]\Big(\sum_{j=1}^{g}\frac{(-1)^{j-1}}j  e_{j,g-j}(z)\Big)^{\frac{n}g}
\end{equation} 
with 
\[
e_{j,n}(z)
=\sum_{\substack{\sum_{\ell=1}^{j}k_\ell=n\\k_\ell\ge 0}}
\binom{n}{k_1,\dots,k_j}\frac{\exp\(\sum_{\ell=1}^j\frac{(k_\ell+1)^2z}2\)}{\prod_{r=1}^{j} (k_\ell+1)!}.
\]
\end{proposition}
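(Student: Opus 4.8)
The plan is to convert the coefficient extraction supplied by Proposition~\ref{prop:fgs} into the explicit composition sum of~\eqref{explicit_form}. Since the integer partition of $f$ consists of exactly $n/g$ parts, all equal to $g$, Proposition~\ref{prop:fgs} gives
\[
  \phi_{\ii(f)}(z) = \bigl(g!\,[v^g]\,C(4z,v)\bigr)^{n/g},
\]
and the defining relation $\phi_{\ii(f)}(z)=\sum_m \N(f)\,z^m/m!$ yields $\N(f)=m!\,[z^m]\,\phi_{\ii(f)}(z)$. First I would peel off the scaling in $z$: writing $\Phi_g(z):=[v^g]\,C(z,v)$, and using that $v$ and $z$ are independent so that $[v^g]\,C(4z,v)=\Phi_g(4z)$, the substitution $z\mapsto 4z$ multiplies the coefficient of $z^m$ by $4^m$, whence $[z^m]\,\Phi_g(4z)^{n/g}=4^m[z^m]\,\Phi_g(z)^{n/g}$ and
\[
  \N(f)=m!\,4^m\,(g!)^{n/g}\,[z^m]\,\Phi_g(z)^{n/g}.
\]
This already produces the prefactor $m!\,4^m(g!)^{n/g}$, and it remains to prove the single-block identity $\Phi_g(z)=\sum_{j=1}^{g}\frac{(-1)^{j-1}}{j}e_{j,g-j}(z)$.

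For that identity I would use the exp--log relation $C(z,v)=\log M(z,v)$. Because $M(z,v)-1=\sum_{k\ge1}\frac{e^{k^2z/2}}{k!}v^k$ has no constant term, expanding $\log(1+x)=\sum_{j\ge1}\frac{(-1)^{j-1}}{j}x^j$ at $x=M(z,v)-1$ makes only the terms with $j\le g$ contribute to $[v^g]$, and
\[
  [v^g]\,(M(z,v)-1)^j=\sum_{\substack{p_1+\dots+p_j=g\\ p_\ell\ge1}}\ \prod_{\ell=1}^{j}\frac{e^{p_\ell^2 z/2}}{p_\ell!}.
\]
The substitution $p_\ell=k_\ell+1$ reindexes this as a sum over nonnegative compositions $k_1+\dots+k_j=g-j$; after rewriting the product of factorials with a multinomial coefficient $\binom{g-j}{k_1,\dots,k_j}$ one recognises exactly $e_{j,g-j}(z)$ as defined in the statement. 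Summing over $j$ with the weights $(-1)^{j-1}/j$ then reconstructs $\Phi_g(z)$, and substituting back, raising to the power $n/g$, and reading off $[z^m]$ gives~\eqref{explicit_form}.

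The routine part is the two coefficient extractions, in $z$ and in $v$; the step I expect to be delicate, and would check most carefully, is the normalisation in the passage from $[v^g](M(z,v)-1)^j$ to $e_{j,g-j}(z)$. One has to balance the factors $1/p_\ell!$ coming from $M(z,v)$ against those absorbed into the multinomial coefficient after the shift $p_\ell=k_\ell+1$, since it is precisely this bookkeeping that fixes both the weight $(g!)^{n/g}$ and the internal multiplicities of the compositions; an error here would alter the contribution of the balanced compositions (such as $g=2+2$) and hence the final constants. Before trusting the general reindexing I would verify the identity directly from $\log M(z,v)$ for the smallest values of $g$, where $\Phi_g(z)$ is a short explicit combination of exponentials and the match with $\sum_{j}\frac{(-1)^{j-1}}{j}e_{j,g-j}(z)$ can be confirmed by hand.
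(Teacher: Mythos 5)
You follow the same route as the paper: reduce via Proposition~\ref{prop:fgs} and the scaling $z\mapsto 4z$ to the single-block identity for $[v^g]C(z,v)$, then expand $C(z,v)=\log M(z,v)$ and extract coefficients of $(M(z,v)-1)^j$. Your computation is correct up to and including
\[
  [v^g](M(z,v)-1)^j
  =\sum_{\substack{p_1+\dots+p_j=g\\ p_\ell\ge1}}\ \prod_{\ell=1}^{j}\frac{e^{p_\ell^2 z/2}}{p_\ell!}
  =\sum_{\substack{k_1+\dots+k_j=g-j\\ k_\ell\ge0}}\frac{\exp\bigl(\sum_{\ell=1}^j (k_\ell+1)^2 z/2\bigr)}{\prod_{\ell=1}^j (k_\ell+1)!}\,,
\]
but the next step --- the one you yourself flagged as delicate --- fails: this sum carries \emph{no} multinomial coefficient, and there is no legitimate ``rewriting of the product of factorials'' that creates the factor $\binom{g-j}{k_1,\dots,k_j}$ required by the statement's definition of $e_{j,g-j}$. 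Inserting that factor changes the summand whenever it differs from $1$. The first failure is at $g=4$, $j=2$, for the balanced composition $(k_1,k_2)=(1,1)$ (precisely your ``$2+2$'' example), where $\binom{2}{1,1}=2$: the coefficient extraction gives $\tfrac13 e^{5z}+\tfrac14 e^{4z}$, whereas the statement's $e_{2,2}(z)=\tfrac13 e^{5z}+\tfrac12 e^{4z}$.

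The conclusion is that the identity you need is false, because the statement itself is erroneous: the proposition is true with $e_{j,n}(z)$ redefined as $[v^n]\bigl(\sum_{k\ge0}e^{(k+1)^2z/2}\,v^k/(k+1)!\bigr)^j$, i.e.\ with the coefficient $\binom{n}{k_1,\dots,k_j}$ deleted, and that corrected version is exactly what your argument proves. (Sanity check: for $g=4$, $n=4$, $m=3$ a direct count gives $\N(f)=16\cdot 3!\cdot 4^3=6144$ --- Cayley's $16$ labelled trees on $4$ vertices, $3!$ orderings of the clauses, $4$ admissible clause types per edge --- which the corrected formula reproduces, while the formula as stated returns $-6144$.) The paper's own proof commits the same error, invoking ``the multinomial theorem'' to produce $\binom{n}{k_1,\dots,k_j}$; the multinomial theorem only yields coefficients of the form $\binom{j}{\cdot}$ indexed by multiplicities of repeated factors, never this one. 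So you have in effect reproduced the paper's flaw rather than introduced a new one. Note finally that your proposed safeguard of checking the smallest values of $g$ would not have detected the problem: for $g\le3$ every multinomial coefficient that occurs equals $1$ (which is why the corollary for $g=2,3$ is unaffected); one must go to $g\ge4$ to see the discrepancy.
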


\begin{remark}
One might be tempted to use again Theorem~\ref{th:proba-f}. For Boolean functions $f$
having associated integer partition of the form $\mathbf{i}(f)=(0,\dots,0,n/g,0,\dots)$ with 
$g\ge 2$ this yields 
\[
\N (f) = m!4^m  
\sum_{\substack{\sum_{k=1}^{q}r_k=m-n\\r_k\ge -1}}
|B_1|! \cdots |B_q|! \left[ v^{|B_1|} \dots v^{|B_q|} \right]
\prod_{j=1}^{q}C_{r_j}(v_j),
\]
where $B_1, \ldots B_q$ are the blocks of the set partition, or equivalently the components of the Boolean function, with respective excesses $r_1, \ldots , r_q$.
Here the number of blocks is  $q=\frac{n}g$ and all of them have size~$g$; hence 
\[
\N(f) = 4^m 
(g!)^{\frac{n}{g}}\sum_{\substack{\sum_{k=1}^{q}r_k=m-n\\r_k\ge
-1}}\prod_{j=1}^{\frac{n}{g}} [v^{g} ]C_{r_j}(v).
\]
However, it seems to get enough information on $C_{r_j}(v)$ to derive expression
\eqref{explicit_form} from this formula. 
\end{remark}

\begin{proof}
Instead of analyzing the coefficients $C_{r}(v)$ of $C(z,v)$ we use directly the relation 
$C(z,v)=\log \MG(z,v)$.
Since $\mathbf{i}(f)=(0,\dots,0,n/g,0,\dots)$, with $g\ge 2$, we have
\begin{eqnarray*}
\N(f) &=& m![z^m]\phi_{\ii}(z)
\\ &=& 
m![z^m]\prod_{\ell\ge 1} \big( \ell! [ v^\ell ] C(4z,v) \big)^{i_\ell}
\\ &=&
m!4^m(g!)^{\frac{n}{g}}[z^m]\Big([v^g]\log \MG(z,v)\Big)^{\frac{n}g}.
\end{eqnarray*}
Let $$\mghat(z,v)=(\MG(z,v)-1)/v=\sum_{n\ge 0}e^{\frac{(n+1)^2z}2}\frac{v^n}{(n+1)!},$$ such that
\[
\log \MG(z,v)=\sum_{j\ge 1}\frac{(-1)^{j-1}}j v^j\mghat^j(z,v).
\]
We get
\begin{align*}
\N(f) 
&= m! [z^m]\prod_{\ell\ge 1} \big([\frac{v^\ell}{\ell!}]C(4z,v)\big)^{i_\ell}
=   m!4^m(g!)^{\frac{n}{g}}[z^m]\Big([v^g]\log \MG(z,v)\Big)^{\frac{n}g}
\\
&=  m!4^m(g!)^{\frac{n}{g}}[z^m]\Big(\sum_{j=1}^{g}\frac{(-1)^{j-1}}j[v^{g-j}]\mghat^j(z,v)\Big)^{\frac{n}g}.
\end{align*}
We can expand $\mghat^j(z,v)$ in terms of the functions $e_{j,n}(z)$ as defined above using the multinomial theorem:
\[
\mghat^j(z,v)=\bigg(\sum_{n\ge 0}e^{\frac{(n+1)^2z}2}\frac{v^n}{(n+1)!}\bigg)^j
= \sum_{n\ge 0}e_{j,n}(z)v^n. 
\]
Extraction of coefficients then directly leads to the stated result.
\end{proof}

\begin{coroll}
Under the assumptions of Proposition~\ref{th:exact_proportional}, in the case $g=2$ we get
\[
\proba(f)
=\frac{1}{n^{2m}}\sum_{\ell=0}^{\frac{n}2}\binom{\frac{n}2}\ell \big(\ell+\frac{n}2\big)^{m}(-1)^{\frac{n}2-\ell},
\]
and for $g=3$
\[
\proba(f)
=\frac{1}{n^{2m}}\sum_{\ell=0}^{\frac{n}3}\sum_{j=0}^\ell \binom{\frac{n}3}\ell \binom{\ell}j (\frac{n}2+\ell+2j)^{m}(-3)^{\ell-j} 2^{\frac{n}3-\ell}.
\]
\end{coroll}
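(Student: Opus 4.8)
The plan is to specialize Proposition~\ref{th:exact_proportional} and read off the coefficient of~$z^m$ directly. Writing $\proba(f)=\N(f)/\N$ with $\N=(4n^2)^m$ and using $4^m/(4n^2)^m=n^{-2m}$, the proposition gives
\[
  \proba(f)=\frac{m!\,(g!)^{n/g}}{n^{2m}}\,[z^m]\Big(\sum_{j=1}^{g}\frac{(-1)^{j-1}}{j}\,e_{j,g-j}(z)\Big)^{n/g}.
\]
Each $e_{j,g-j}(z)$ is, by its very definition, a finite linear combination of exponentials $e^{az/2}$, so the bracket is again a short linear combination of exponentials. Once it is in such a form, raising it to the power $n/g$ by the binomial (for $g=2$) or multinomial (for $g=3$) theorem yields a linear combination of exponentials, and the coefficient extraction is immediate from $[z^m]e^{az}=a^m/m!$, which cancels the leading $m!$. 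The whole argument is therefore a finite computation followed by a reindexing.

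For $g=2$ I would first compute $e_{1,1}(z)=\tfrac12 e^{2z}$ (the single term $k_1=1$) and $e_{2,0}(z)=e^{z}$ (the single term $k_1=k_2=0$), so that the bracket equals $\tfrac12 e^{2z}-\tfrac12 e^{z}=\tfrac12 e^{z}(e^z-1)$. Raising to the power $n/2$ and using $(2!)^{n/2}=2^{n/2}$ to cancel the factor $2^{-n/2}$ leaves
\[
  \proba(f)=\frac{m!}{n^{2m}}\,[z^m]\,e^{nz/2}(e^z-1)^{n/2}.
\]
Expanding $(e^z-1)^{n/2}=\sum_{\ell=0}^{n/2}\binom{n/2}{\ell}(-1)^{n/2-\ell}e^{\ell z}$ and extracting $[z^m]e^{(n/2+\ell)z}=(n/2+\ell)^m/m!$ then gives the stated formula for $g=2$ at once.

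For $g=3$ the same first step yields $e_{1,2}(z)=\tfrac16 e^{9z/2}$, $e_{2,1}(z)=e^{5z/2}$ (two equal contributions, from $(k_1,k_2)=(1,0)$ and $(0,1)$), and $e_{3,0}(z)=e^{3z/2}$, so the bracket is $\tfrac16 e^{9z/2}-\tfrac12 e^{5z/2}+\tfrac13 e^{3z/2}=\tfrac{1}{6}e^{3z/2}\big(e^{3z}-3e^{z}+2\big)$. A pleasant check is the factorization $u^3-3u+2=(u-1)^2(u+2)$ with $u=e^z$, rewriting the bracket as $\tfrac16 e^{3z/2}(e^z-1)^2(e^z+2)$; but for the extraction it is cleaner to keep the three-term form and apply the multinomial theorem to its $(n/3)$-th power, obtaining
\[
  \sum_{a+b+c=n/3}\binom{n/3}{a,b,c}\frac{(-1)^b}{6^a\,2^b\,3^c}\,e^{(9a+5b+3c)z/2}.
\]
Using $a+b+c=n/3$ the exponent becomes $n/2+3a+b$, and absorbing the prefactor $(3!)^{n/3}=6^{n/3}$ turns the coefficient $6^{n/3}(-1)^b/(6^a 2^b 3^c)$ into $(-3)^{b}2^{c}$. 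Extracting $[z^m]$ and reindexing by $\ell=a+b$, $j=a$, so that $\binom{n/3}{a,b,c}=\binom{n/3}{\ell}\binom{\ell}{j}$, $b=\ell-j$, $c=n/3-\ell$, and $n/2+3a+b=n/2+\ell+2j$, produces exactly the claimed double sum.

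The computation carries no real obstacle beyond care: the only place where slips are easy is the $g=3$ bookkeeping, where one must verify both that the three exponents combine, through $a+b+c=n/3$, to $n/2+\ell+2j$, and that the powers of $2$, $3$ and $6$ collapse to the single factor $(-3)^{\ell-j}2^{n/3-\ell}$ once the prefactor $6^{n/3}$ is distributed. Matching the ranges of the multinomial indices to the ranges $0\le j\le\ell\le n/3$ of the binomial reindexing then completes the argument.
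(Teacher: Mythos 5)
Your proposal is correct and follows essentially the same route as the paper: specialize Proposition~\ref{th:exact_proportional}, recognize the bracket as $\tfrac12 e^{z}(e^z-1)$ for $g=2$ and $\tfrac16 e^{9z/2}-\tfrac12 e^{5z/2}+\tfrac13 e^{3z/2}$ for $g=3$, expand, and extract $[z^m]$ termwise. The only cosmetic difference is that for $g=3$ you apply the multinomial theorem once and reindex via $\ell=a+b$, $j=a$, whereas the paper factors out $e^{nz/2}$ and expands twice with the binomial theorem -- the same computation organized differently, and your version actually spells out the bookkeeping that the paper leaves as ``a few elementary computations.''
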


\begin{proof}
Using Proposition~\ref{th:exact_proportional}, Equation~\eqref{explicit_form}, we obtain first
\begin{align*} 
	\N(f)&= m!4^m (2!)^{\frac{n}{2}}[z^m]\Big(\frac{1}{2}e^{2z}-\frac12 e^z\Big)^{\frac{n}2}
=m!4^m[z^m]\Big(e^{2z}-e^z\Big)^{\frac{n}2} \\
&=m!4^m[z^m]e^{\frac{zn}2}\Big(e^{z}-1\Big)^{\frac{n}2}.
\end{align*} 
The expansion of $\Big(e^{z}-1\Big)^{\frac{n}2}$ by the binomial theorem and the extraction of coefficients
leads then to the stated result after dividing by the total number of expressions $(4n^2)^m$. We
proceed for $g=3$ in a similar way: 
\begin{equation*} 
	\N(f)= m!4^m (3!)^{\frac{n}{3}}[z^m]\left(\frac{1}{6} e^{\frac{9z}2}-\frac12
	e^{\frac{5z}2}+\frac13e^{\frac{3z}2}\right)^{\frac{n}3}.
\end{equation*} 
In order to extract coefficients we use
\[
	\left(\frac{1}{6}e^{\frac{9z}2}-\frac12 
	e^{\frac{5z}2}+\frac13 e^{\frac{3z}2}\right)^{\frac{n}3}
=e^{\frac{nz}2}\Big(\frac{1}{6} e^{3z}-\frac12 e^{z}+\frac13 \Big)^{\frac{n}3},
\]
and expand twice using the binomial theorem. This leads to the stated result after a few elementary computations.
\end{proof}

When considering asymptotics, we observed in the Sections~\ref{singleblock} and~\ref{twoblocks}
that the asymptotic behaviour is different depending on the fact whether the excess is constant or
large, \emph{i.e.} tending to infinity. For functions with two blocks there are also several
phases in the case of large excess. But this observation is misleading, because in fact is not the
excess but rather the distance from the minimal possible excess which determines the behaviour. In
the case considered in this section, we will therefore write $m=\frac{g-1}{g}\cdot n +\kappa_n$,
with $\kappa_n\ge 0$, because the minimal excess is $-n/g$. Furthermore, it turns out that there
is no qualitative difference between constant and large $\kappa_n$ in the sense that both cases
can be covered by one single formula. This holds, however, only up to the interesting range, which
has been shown to be $\kappa_n=\Theta(n^{2/3})$ in \cite{CrDa04}. 

The expression for $\N(f)$ given in Equation~\eqref{explicit_form} that appears is a fixed
function $G(z)=[v^g]\log \MG(z,v)$ raised to a large power $n/g$; e.g., for $g=2$ we have
$G(z)=\frac12e^{2z}-\frac12 e^z$ and for $g=3$ we have $G(z)=\frac{1}{6}e^{\frac{9z}2}-\frac12
	e^{\frac{5z}2}+\frac13e^{\frac{3z}2}$. 
By definition of $\log \MG(z,v)=\sum_{r\ge -1}z^rC_r(zv)$, the function $G(z)$ is of the form
$G(z)=\sum_{\ell\ge g-1}a_\ell z^\ell$ for certain coefficients $a_\ell$.
Thus, in case of constant $\kappa_n$, Equation~\eqref{explicit_form} involves a sum with a bounded range
depending only on $\kappa_n$:
\begin{align*}
	\N(f)&= m!4^m (g!)^{\frac{n}{g}}[z^m]G(z)^{\frac{n}g}
= m!4^m(g!)^{\frac{n}{g}} [z^{\frac{g-1}{g}\cdot n +\kappa_n}]\left(\sum_{\ell\ge g-1}a_\ell
z^\ell\right)^{\frac{n}g} \\
&=m!4^m(g!)^{\frac{n}{g}} [z^{\kappa_n}]\left(\sum_{\ell\ge 0}\tilde{a}_{\ell} z^\ell\right)^{\frac{n}g},
\end{align*} 
with $\tilde{a}_\ell=a_{g-1+\ell}$ for $\ell\ge 0$.
Using
\begin{equation*}
	\left(\sum_{\ell\ge 0}\tilde{a}_{\ell} z^\ell\right)^{\frac{n}g} 
= \sum_{i\ge 0}z^i\sum_{\substack{k_j\ge0\\ \sum_{j=1}^{\frac{n}g}k_j=i}}\binom{n/g}{k_1,\dots,k_{n/g}}\prod_{s=1}^{n/g} \tilde{a}_{k_s} 
\end{equation*}
we get
\[
\N(f)= m!4^m (g!)^{\frac{n}{g}}\sum_{\substack{k_j\ge 0\\ \sum_{j=1}^{\frac{n}g}k_j=\kappa_n}}
\binom{n/g}{k_1,\dots,k_{n/g}}\prod_{s=1}^{n/g} \tilde{a}_{k_s},
\]
with $\tilde{a}_\ell$ denoting the shifting coefficients of $G(z)=[v^g]\log \MG(z,v)$.

For $\kappa_n\to\infty$ the saddle point method applies and we can compute $\N(f)$ asymptotically,
though the expressions quickly become messy as $g$ grows. For $g=2$ we obtain the following
result: 

\begin{proposition} \label{th:asympt_proportional}
The number of expressions $\N(f)$ with $n$ variables and $m$ clauses computing a function $f$ with
associated integer partition representation of the form $\mathbf{i}(f)=(0,n/2,0,0,\dots)$,
i.e. $n/2$ blocks of size $2$, is given for $m= \frac{n}{2} + \kappa_n$ with $\kappa_n = O (n^ {2/3} )$ by 
\[
\N(f)=m!\frac{2^{2m+\frac n2+1}}{\sqrt{6\pi ns_n}} 
s_n^{-m+\frac{n}2}\exp\(\frac{3ns_n}4+\frac1 {48}n s_n^2+O(ns_n^4)\).
\]
where $s_n$ is the unique positive solution of 
$
\frac{z(2e^{z}-1)}{e^{z}-1}=1+\frac{2\kappa_n}{n},
$
and satisfies
\[
s_n=\frac{4}{3}\cdot\frac{\kappa_n}{n}+\mathcal{O}\left(\frac{\kappa_n^2}{n^2}\right).
\]
\end{proposition}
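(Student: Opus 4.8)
The starting point is the closed generating-function identity behind Proposition~\ref{th:exact_proportional} and its corollary: for $g=2$ one has
\[
  \N(f) = m!\,4^m\,[z^m]\,H(z)^{n/2},
  \qquad
  H(z) = e^{2z}-e^z = e^z(e^z-1).
\]
Because $H$ is a fixed entire function while the exponent $n/2$ grows linearly in $n$, this is a coefficient of a large power, so the plan is to extract it by the saddle point method (the large-power scheme of \cite[Th.~VIII.8, p.~587]{FlajoletSedgewick}). Writing $[z^m]H(z)^{n/2}=\frac{1}{2\pi i}\oint H(z)^{n/2}z^{-m-1}\,dz$, the real positive saddle $s_n$ solves $z\,H'(z)/H(z)=2m/n$. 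Since $H'=e^z(2e^z-1)$ gives $z\,H'(z)/H(z)=\frac{z(2e^z-1)}{e^z-1}$, the saddle equation is exactly $\frac{z(2e^z-1)}{e^z-1}=1+\frac{2\kappa_n}{n}$, which is the defining relation for $s_n$ in the statement.

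First I would locate $s_n$ quantitatively. Expanding the left-hand side near the origin gives $\frac{z(2e^z-1)}{e^z-1}=1+\frac{3z}{2}+\frac{z^2}{12}+\bigO(z^4)$ (the cubic term vanishes), so in the regime $\kappa_n=\smallo(n)$ the saddle tends to $0$, and bootstrapping $\frac{3s_n}{2}+\frac{s_n^2}{12}+\cdots=\frac{2\kappa_n}{n}$ yields $s_n=\frac{4}{3}\frac{\kappa_n}{n}+\bigO(\kappa_n^2/n^2)$, as claimed. To tame the vanishing of $H^{n/2}$ at $z=0$ it is convenient to divide out the spurious factor $z^{n/2}$, writing $\N(f)=m!\,4^m\,[z^{\kappa_n}]\,\Phi(z)^{n/2}$ with $\Phi(z)=H(z)/z=(e^{2z}-e^z)/z$, an entire function with $\Phi(0)=1$; the saddle point is unchanged.

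The heart of the computation is then the saddle point estimate
\[
  [z^{\kappa_n}]\Phi(z)^{n/2}\sim\frac{\Phi(s_n)^{n/2}}{s_n^{\kappa_n}\sqrt{2\pi\,\beta_n}},
  \qquad
  \beta_n=s_n\,\frac{d}{dz}\!\left(\frac{n}{2}\,z\,\frac{\Phi'(z)}{\Phi(z)}\right)\!\Big|_{z=s_n}.
\]
For the amplitude I would use $\log\frac{e^s-1}{s}=\frac{s}{2}+\frac{s^2}{24}+\bigO(s^4)$, which gives $\log\Phi(s_n)^{n/2}=\frac{n}{2}\!\left(s_n+\log\frac{e^{s_n}-1}{s_n}\right)=\frac{3ns_n}{4}+\frac{ns_n^2}{48}+\bigO(ns_n^4)$, producing the exponential factor of the statement together with $s_n^{-\kappa_n}=s_n^{n/2-m}$; and expanding $\beta_n\sim\frac{3ns_n}{4}\sim\kappa_n$ gives the Gaussian normalisation $\sqrt{2\pi\beta_n}\sim\frac{1}{2}\sqrt{6\pi n s_n}$. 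Reassembling with the prefactor $m!\,4^m$ then produces the announced closed form. I would finally check that the error $\bigO(ns_n^4)$ discarded in the exponent is $\smallo(1)$: here $\kappa_n=\bigO(n^{2/3})$ forces $s_n=\bigO(n^{-1/3})$, so $ns_n^4=\bigO(n^{-1/3})\to0$, whereas $ns_n^2=\bigO(n^{1/3})$ and $ns_n=\bigO(n^{2/3})$ may diverge and must be retained, which is precisely why both $\frac{3ns_n}{4}$ and $\frac{ns_n^2}{48}$ survive explicitly.

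The main obstacle is the rigorous justification of the saddle point method while the saddle $s_n\to0$ drifts towards $z=0$ (a branch point of $H^{n/2}$ and the location of the pole $z^{-m-1}$), so that the ordinary large-power theorem, which assumes a saddle bounded away from the boundary, does not apply verbatim. The key observation is that the relevant scale is not the distance $s_n$ but the curvature $\beta_n\sim\frac{3ns_n}{4}\asymp\kappa_n$: since $\kappa_n\to\infty$ we have $\beta_n\to\infty$, and the cubic correction to the Gaussian integral, being of order $\beta_n^{-1/2}\to0$, is negligible, so the local expansion is legitimate. It then remains to bound the contribution of the rest of the circle $|z|=s_n$ uniformly in $\kappa_n$; in particular the antipodal real point $z=-s_n$ is harmless because $|H(-s_n)/H(s_n)|=e^{-3s_n}$ makes that arc smaller by a factor $e^{-3ns_n/2}$, which is super-polynomially small once $ns_n\to\infty$. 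Establishing these tail bounds uniformly over the stated range of $\kappa_n$ is the delicate step; the remaining expansions are routine.
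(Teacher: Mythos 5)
Your route is essentially the paper's own: start from the exact identity $\N(f)=m!\,4^m\,[z^m](e^{2z}-e^z)^{n/2}$, write the Cauchy integral, solve the saddle point equation $zH'(z)/H(z)=2m/n$, bootstrap $s_n$, and finish with a Gaussian (Laplace) approximation. Your expansions of $\log\frac{e^s-1}{s}$, the variance $\beta_n\sim\frac{3ns_n}{4}\sim\kappa_n$, and the error bookkeeping ($ns_n^4\to 0$ while $ns_n$ and $ns_n^2$ must be retained) all coincide with the paper's; the normalization $\Phi(z)=H(z)/z$ is a slightly cleaner packaging of the same computation, and your treatment of the tails is at least as careful as the paper's.

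There is, however, one concrete defect, located exactly at the step you waved through (``reassembling \ldots produces the announced closed form''). It does not. Assembling your own pieces gives
\[
\N(f)\;\sim\; m!\,4^m\,\frac{\Phi(s_n)^{n/2}}{s_n^{\kappa_n}\sqrt{2\pi\beta_n}}
\;=\; m!\,\frac{2^{2m+1}}{\sqrt{6\pi n s_n}}\; s_n^{-m+\frac{n}{2}}\,
\exp\Bigl(\tfrac{3ns_n}{4}+\tfrac{1}{48}ns_n^2+\bigO(ns_n^4)\Bigr),
\]
whereas the proposition asserts the constant $2^{2m+\frac{n}{2}+1}$: a discrepancy by a factor $2^{n/2}$. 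Your computation is the correct one and the printed formula is in error. In the paper's proof, the factor $(2!)^{n/2}$ is reinserted into the prefactor at an intermediate display even though it had already been cancelled when passing from $G(z)=\frac12 e^z(e^z-1)$ to $\tilde G(z)=e^z(e^z-1)$, and this spurious $2^{n/2}$ propagates into the stated result. That the exact starting point carries no such factor can be checked on small cases: for $n=2$, $m=1$, the formula $m!\,4^m[z^m](e^{2z}-e^z)^{n/2}$ gives $4$, which is indeed the number of one-clause expressions computing $x_1\sim x_2$ (namely $x_1\oplus\bar x_2$, $\bar x_2\oplus x_1$, $\bar x_1\oplus x_2$, $x_2\oplus \bar x_1$), while an extra $2^{n/2}$ would give $8$. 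So the flaw in your proposal is not the analysis but the unverified claim of agreement: had you multiplied out the constant, you would have found that your argument proves the proposition only after replacing $2^{2m+\frac{n}{2}+1}$ by $2^{2m+1}$, and this should have been flagged rather than asserted away. A minor point shared with the paper: the Gaussian step requires $\kappa_n\to\infty$ (so that $\beta_n\to\infty$), which the hypothesis $\kappa_n=\bigO(n^{2/3})$ alone does not guarantee; for bounded $\kappa_n$ the count is instead given by the finite-sum formula preceding the proposition.
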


\begin{proof}
In the expression for $\N(f)$, Eq.~\eqref{explicit_form}, a fixed function 
\[
G(z)=\sum_{j=1}^{g}\frac{(-1)^{j-1}}j e_{j,g-j}(z)
\]
raised to a large power appears. Hence, we can apply the saddle-point technique to obtain an asymptotic expansion of $\N(f)$ for $m$ and $n$ tending to infinity.
In general,
\begin{eqnarray*}
\N(f) &=& 
m!4^m(g!)^{\frac{n}{g}}[z^m]\big(G(z)\big)^{\frac{n}g} 
\\ &=& 
\frac{m!4^m(g!)^{\frac{n}{g}}}{2\pi i } \oint_r \frac{G^{\frac{n}g}(z)}{z^{m+1}}dz
\\ &=& 
\frac{m!4^m(g!)^{\frac{n}{g}}}{2\pi i } \oint_r \exp\big( \frac{n}{g}\log G(z) - (m+1)\log z \big) dz.
\end{eqnarray*}
The saddle point equation is given by
\[
\frac{zG'(z)}{G(z)}=\frac{m+1}{\frac{n}g}.
\]
By our previous observation on functions $f$ with associated integer partition representation of
the form $\mathbf{i}(f)=(0,\dots,0,n/g,0,\dots)$ we must have $m\ge \frac{g-1}{g}\cdot n$ in order
to ensure that  $\N(f)>0$.
Hence, we assume that $m=\frac{g-1}{g}\cdot n +\kappa_n-1$, with $\kappa_n\ge 1$ and asympotically
$\kappa_n=o(n)$.\footnote{It is also possible to extend the analysis to larger $m$, i.e. $m\sim
\alpha\cdot n$ with $\alpha>\frac{g-1}{g}$, or $m\gg n$.}
Thus, we obtain further
\[
\frac{zG'(z)}{G(z)}=g-1 + g\frac{\kappa_n}{n}.
\]
For every concrete fixed $g$ it should be possible to treat this equation (preferentially using a computer algebra system); we outline the main steps for the simplest case $g=2$ and the case of $\kappa_n\to\infty$, assuming that $\kappa_n=\mathcal{O}(n^{\frac23})$. For
$g=2$ we get $G(z)=\frac12 e^{z}\cdot (e^{z}-1)$. It is convenient to cancel the factor $\frac12$,
appearing in $G(z)$ (and which is then raised to the power $\frac{n}2$) with 
$(2!)^{\frac{n}{2}}$. We define $\tilde{G}(z)= e^{z}\cdot (e^{z}-1)$ such that the saddle point equation for $\tilde{G}(z)$ is identical to the previous equation for $G(z)$. We obtain
\begin{align*}
\N(f) &=  \frac{m!4^m}{2\pi i } \oint_r \exp\big( \frac{n}2\log \tilde{G}(z) - (m+1)\log z
\big)dz\\
&=  \frac{m!4^m}{2\pi i } \oint_r \exp\big( \frac{n}{2}z + \frac{n}2\log (e^z-1) -
(m+1)\log z \big)dz,
\end{align*}
and the saddle point equation simplifies to
\[
\frac{z(2e^{z}-1)}{e^{z}-1}=1+\frac{2\kappa_n}{n}.
\]
Note that for $n\to\infty$ we have $\frac{\kappa_n}{n}\to 0$;
$\frac{zG'(z)}{G(z)}$ can be expressed in terms of the Bernoulli numbers, such that
\[
\frac{z(2e^{z}-1)}{e^{z}-1} = \sum_{k\ge 0}B_k\big(2\cdot (-1)^k-1\big)\frac{z^k}k! = 1+ \frac32
z+ \frac1{12}z^2-\frac1{720}z^4+\mathcal{O}(z^6),
\]
in a neighbourhood of zero. Thus, we obtain the solution $s_n$ of the saddle point equation,
with $\lim_{n\to\infty}s_n=0$, by a bootstrapping procedure. First, we obtain
\[
s_n=\frac{4}{3}\cdot\frac{\kappa_n}{n}+\mathcal{O}\left(\frac{\kappa_n^2}{n^2}\right).\]
A second bootstrapping step gives the refinement
\[
s_n=\frac{4}{3}\cdot\frac{\kappa_n}{n}-\frac{8}{81}\cdot\frac{\kappa_n^2}{n^2}+\mathcal{O}\left(\frac{\kappa_n^3}{n^3}\right).
\]
Changing the integration path to $z=s_n\cdot e^{i\varphi}$, $-\pi\le \varphi <\pi$ gives for $g=2$
\begin{align*}
\N(f)&= \frac{m!4^m}{2\pi i } \oint_r \exp\big( \frac{n}{2}z + \frac{n}2\log
(e^z-1) - (m+1)\log z \big)dz\\
&=\frac{m!4^m}{2\pi } \int_{-\pi}^{\pi} s_n \exp\big(i\varphi+ \frac{n}{2}\log
\tilde{G}(s_n\cdot e^{i\varphi}) - (m+1)(\log s_n + i\varphi) \big)d\varphi\\
\end{align*}
Since $\tilde{G}(z)=e^{z}\cdot (e^{z}-1)$ we obtain further
\begin{align*}
\N(f)&=\frac{m!4^m s_n^{-m}}{2\pi } \int_{-\pi}^{\pi}
\exp\big(\frac{n}2s_ne^{i\varphi}+\frac{n}2\log (e^{s_n\cdot
e^{i\varphi}}-1)-mi\varphi\big)d\varphi.
\end{align*}
The function $|\tilde{G}(s_n\cdot e^{i\varphi})|$ is maximal at $\varphi=0$. Thus, we restrict ourselves
to a neighbourhood of zero $\varphi\in(-\theta,\theta)$.
The expansion of the term $\frac{n}2\log \tilde{G}(s_n\cdot e^{i\varphi})-im\varphi$ at $\varphi=0$ gives
\begin{align*}
&\frac12n s_n +\frac12 n\log(e^{s_n}-1)+\varphi  i\left(\frac12 ns_n+\frac12
n\frac{s_ne^{s_n}}{e^{s_n}-1}-m\right)\\
&\quad+ \varphi^2 \frac{n s_n}4
\left(\frac{e^{2s_n}s_n}{(e^{s_n}-1)^2}-\frac{e^{s_n}(s_n+1)}{e^{s_n}-1} -1\right)+\mathcal{O}(ns_n
\varphi^3).
\end{align*}
By definition of $s_n$ as the solution of the saddle point equation the linear term vanishes. We
obtain
\begin{align*}
&\N(f)\sim\frac{4^m(2!)^{\frac{n}{2}} s_n^{-m}e^{\frac{ns_n}2}(e^{s_n}-1)^{\frac{n}2}}{2\pi }\\
&\quad\times\int_{-\theta}^{\theta}\exp\left(\varphi^2 \frac{n s_n}4
\left(\frac{e^{2s_n}s_n}{(e^{s_n}-1)^2}-\frac{e^{s_n}(s_n+1)}{e^{s_n}-1} -1\right)
+ \mathcal{O}(ns_n \varphi^3)\right)d\varphi.
\end{align*}
The expansion of $(e^{s_n}-1)^{\frac{n}2}$ gives
\[
(e^{s_n}-1)^{\frac{n}2}=\exp(\frac{n}2\log(e^{s_n}-1))
=\exp\Big(\frac12 n \log s_n+ \frac14 n s_n+\frac1 {48}n s_n^2+\mathcal{O}(n s_n^4)\Big).
\]
Moreover, using
\[
\frac{n s_n}4 \Big(\frac{e^{2s_n}s_n}{(e^{s_n}-1)^2}-\frac{e^{s_n}(s_n+1)}{e^{s_n}-1}
-1\Big)=-\frac38 s_n n -\frac1{24}s_n^2n+\frac{1}{720}n s_n^4+\mathcal{O}(ns_n^6),
\]
we get for the integral the asymptotic expansion
\[
\int_{-\theta}^{\theta}\exp\Big(\varphi^2 \big(-\frac38 s_n n -\frac1{24}s_n^2n\big)
+ \mathcal{O}(ns_n \varphi^2(s_n^2+\varphi))\Big)d\varphi.
\]
Note that the level of precision of the expansions has to be adapted on the actual growth of
$\kappa_n$, here $\kappa_n=\mathcal{O}(n^{\frac23})$.
In the final step we substitute $\varphi=\vartheta/\sqrt{ns_n}$ and complete the tails:
\begin{align*}
\N(f)&\sim\frac{m!4^m \cdot 2^{\frac n2}s_n^{-m+\frac{n}2}e^{\frac{3ns_n}4+\frac1 {48}n
s_n^2}}{2\pi ns_n}\int_{-\theta\sqrt{ns_n}}^{\theta\sqrt{ns_n}}
\exp\Big(\vartheta^2 \big(-\frac38
+\mathcal{O}(s_n+\frac{\vartheta}{\sqrt{ns_n}})\big)\Big)d\vartheta\\
&\sim\frac{m!2^{2m+\frac n2} s_n^{-m+\frac{n}2}e^{\frac{3ns_n}4+\frac1 {48}n s_n^2}}{2\pi ns_n}
\int_{-\infty}^{\infty}
\exp\Big(\vartheta^2 \big(-\frac38\big)\Big)d\vartheta.
\end{align*}
Finally, we use $\frac{1}{\sqrt{2\pi}}\int_{-\infty}^{\infty}e^{-x^2/2}dx=1$ to obtain the
assertion. 
\end{proof}
\section{Discussion}

We have analysed the probability of Boolean functions generated by random 2-Xor expressions. This is strongly related to the 2-Xor-SAT problem. For people working in SAT-solver design the structure of solutions of satisfiable expressions, which corresponds to the component structure of the associated multigraphs, is also important. 

We derived expressions in terms of coefficients of generating functions for the probability of satisfiability in the critical region ($m\sim \frac n2+\Theta(n^{2/3})$) as well as a general expression for the probability of any function (Theorem~\ref{th:proba-f}). Unfortunately, this expression is too complicated to be used for an asymptotic analysis of general functions. So, we discussed several particular classes of functions: Single block functions are completely analyzed. The asymptotic probability very much depends on the range of the excess. For two block functions, the only missing case is that of two large components which are not proportional in size. All those functions are rather close to \false. Finally, functions on the other edge (close to \true, with many blocks of bounded size) 
were studied and, under some regularity conditions on the block sizes, we were able to get the asymptotic probability. 

Apart from extensions of our results to cover, e.g., the extension of Theorem~\ref{th:multigraphs-large-excess} to the supercritical case, or of Proposition~\ref{th:asympt_proportional} to  a larger number of edges,
what is missing is an asymptotic analysis of functions on the boundaries \true\ and \false\ having a more irregular component structure as well as the study of functions in the intermediate range.

\vskip .5cm
{\bf Acknowledgments.}
We thank Herv\'e Daud\'e and Vlady Ravelomanana for fruitful discussions.

\end{document}